  \tikzstyle{startstop} = [rectangle, text width=3cm, minimum   height=1cm,text centered, draw=black]%, fill=red!30]
   \tikzstyle{process} = [rectangle, text width=3cm, minimum height=1cm,    text centered, draw=black]%, fill=orange!30]
   \tikzstyle{decision} = [rectangle, text width=3cm, minimum   height=1cm,text centered, draw=black]%, fill=red!30]
   \tikzstyle{arrow} = [thick,->,>=stealth]
\numberwithin{equation}{section}
\newtheorem{theorem}{Theorem}[section]
\newtheorem{corollary}[theorem]{Corollary}
\newtheorem{lemma}[theorem]{Lemma}
\theoremstyle{definition}
\newtheorem{definition}[theorem]{Definition}
\theoremstyle{remark}
\numberwithin{equation}{section}
\DeclareMathOperator{\RE}{Re}
\begin{document}
	%\fontsize{14pt}{16pt}\selectfont
	\title[Higher order differential subordinations for certain starlike functions]{Higher order differential subordinations for certain starlike functions} 
 
% of higher order for $\mathcal{S}^*_s$ and $\mathcal{S}^{*}_\rho$}
\author[N. Verma]{Neha Verma}
	\address{Department of Applied Mathematics, Delhi Technological University, Delhi--110042, India}
	\email{nehaverma1480@gmail.com}
 
 \author[S. S. Kumar]{S. Sivaprasad Kumar}
	\address{Department of Applied Mathematics, Delhi Technological University, Delhi--110042, India}
	\email{spkumar@dce.ac.in}

	\subjclass[2010]{30C45, 30C80}
	
	\keywords{Subordination, Sine Function, Petal-Shaped Domain , Starlike Function}

 \maketitle
\begin{abstract}
In this paper, we employ a novel second and third-order differential subordination technique to establish the sufficient conditions for functions to belong to the classes $\mathcal{S}^*_s$ and $\mathcal{S}^*_{\rho}$, where $\mathcal{S}^*_s$ is the set of all normalized analytic functions $f$ satisfying $ zf'(z)/f(z)\prec 1+\sin z$ and $\mathcal{S}^*_{\rho}$ is the set of all normalized analytic functions $f$ satisfying $ zf'(z)/f(z)\prec 1+\sinh^{-1} z$.

%$\mathcal{S}^*_s=\{f\in \mathcal{A}: zf'(z)/f(z)\prec 1+\sin z\}$ and $\mathcal{S}^*_{\rho}=\{f\in \mathcal{A}: zf'(z)/f(z)\prec 1+\sinh^{-1} z\}$.
%In this paper, we deduce some second and third-order differential subordination implication results pertaining to the class $\mathcal{S}^*_s$, which comprises normalized analytic functions $f$ satisfying the subordination criterion $zf'(z)/f(z)\prec 1+\sin z$. Additionally, it establishes second-order differential subordination implication results hold true for the class $\mathcal{S}^{*}_{\rho}$, consisting of normalized analytic functions $f$ that adhere to the subordination condition $zf'(z)/f(z)\prec 1+\sinh^{-1} z$.

%This paper reveals insights into second and third-order differential subordination implications for the class $\mathcal{S}^*_s$, which comprises normalized analytic functions $f$ satisfying the subordination criterion $zf'(z)/f(z)\prec 1+\sin z$. Additionally, it establishes second-order differential subordination results for the class $\mathcal{S}^{*}_{\rho}$, consisting of normalized analytic functions $f$ that adhere to the subordination condition $zf'(z)/f(z)\prec 1+\sinh^{-1} z$.

\end{abstract}
\maketitle
	
\section{Introduction}

\noindent Consider the class $\mathcal{H}(\mathbb{D})$, comprising analytic functions defined on the unit disk $\mathbb{D}:=\{z\in \mathbb{C}:|z|<1\}$. For any positive integer $n$ and $a\in \mathbb{C}$, we define
\begin{equation*}
\mathcal{H}[a,n]:=\{f\in \mathcal{H}:f(z)=a+a_nz^n+a_{n+1}z^{n+1}+a_{n+2}z^{n+2}+\ldots\}.
\end{equation*}
We assume that $\mathcal{A}:=\mathcal{A}_1$ represents the class containing normalized analytic functions defined on $\mathbb{D}$ in the following format:
\begin{equation*}
\mathcal{A}_n=\{f\in\mathcal{H}: f(z)=z+a_{n+1}z^{n+1}+a_{n+2}z^{n+2}+\ldots\}.
\end{equation*}
Additionally, we define $\mathcal{S}$ as a subset of $\mathcal{A}$ consisting of univalent functions, and $\mathcal{S}^*$ as a subset of $\mathcal{S}$ containing all starlike functions. A starlike function's analytic characterization is given by $\RE (zf'(z)/f(z))>0$ for $z\in \mathbb{D}$. Now, considering two functions $f$ and $g$, both belonging to the class $\mathcal{A}$, we define the subordination relation as $f\prec g$ if there exists a Schwarz function $w$ that fulfills the conditions: $w(0)=0$ and $|w(z)|\leq |z|$ such that $f(z)=g(w(z))$. Furthermore, if $g$ is univalent, the subordination $f\prec g$ holds if and only if $f(0)=g(0)$ and $f(\mathbb{D})\subseteq g(\mathbb{D})$.

Ma and Minda \cite{ma-minda} introduced the class $\mathcal{S}^*(\varphi)$, where $\varphi$ is an analytic univalent function with properties: $\RE\varphi(z)>0$, $\varphi(\mathbb{D})$ is symmetric about the real axis and starlike with respect to $\varphi(0)=1$ and $\varphi'(0)>0$. This function $\varphi$ is referred to as the Ma-Minda function. The class $\mathcal{S}^*(\varphi)$ is defined as:

\begin{equation}
\mathcal{S}^*(\varphi)=\bigg\{f\in \mathcal {A}:\dfrac{zf'(z)}{f(z)}\prec \varphi(z) \bigg\}.\label{mindaclass}
\end{equation}

Researchers came across a number of interesting subclasses of $\mathcal{S}^*$ for the proper selections of the function $\varphi (z)$ in \eqref{mindaclass}. We enlist below some of these subclasses in Table \ref{10 table}. In 2019, Cho et al. \cite{chosine} introduced the class  $\mathcal{S}^{*}_{s}$ by choosing $\varphi(z)=1+\sin z$ and studied its radius problems. Within this context, a function $f$ belongs to $\mathcal{S}^{*}_{s}$ if the ratio $zf'(z)/f(z)$ resides within $\Omega_s:=\{w \in \mathbb{C}:|\arcsin (w -1)|<1\}$, which is an eight-shaped domain situated in the right half-plane. Similarly, in 2022, Arora and Kumar \cite{kush} introduced the class $\mathcal{S}^{*}_{\rho}$ for $\varphi(z)=1+\sinh^{-1} z$. In this setting, a function $f$ is categorized as a member of $\mathcal{S}^{*}_{\rho}$ if the quotient $zf'(z)/f(z)$ resides within $\Omega_\rho:=\{w \in \mathbb{C}:|\sinh (w -1)|<1\}$, a petal-shaped region.

\begin{table}[!htbp]\label{10 table}
\centering
\caption{List of subclasses of $\mathcal{S}^{*}$}
\begin{tabular}{|c|c |c|c|} 
 \hline
 $\mathcal{S}^{*}(\varphi)$ & $\varphi(z)$ & Author(s) & Reference \\ [1ex] 
 \hline
  $\mathcal{S}^*[C,D]$ & $(1+Cz)/(1+Dz)$   & Janowski&\cite{1janowski}     \\
  \hline
  $\mathcal{S}^{*}_{SG}$ & $2/(1+e^{-z})$ & Goel and Kumar &\cite{goel} \\
  \hline
   $\mathcal{S}^{*}_{\varrho}$ & $1+ze^z$ & Kumar and Kamaljeet &\cite{kumar-ganganiaCardioid-2021}\\
   \hline
   $\mathcal{S}^{*}_{e}$ & $e^z$ &  Mendiratta et al.&\cite{mendi} \\
   \hline
    $\mathcal{S}^{*}_{q}$ & $z+\sqrt{1+z^2}$ &  Raina and Sok\'{o}\l &\cite{raina} \\
    \hline
    $\mathcal{S}^{*}_ L$&$\sqrt{1+z}$&  Sok\'{o}\l \ and Stankiewicz  &\cite{stan}   \\
   
 \hline
 
\end{tabular}
\label{table1}
\end{table}

The study of differential subordination is a more generalized version of differential inequalities on the real line to the complex plane. Miller and Mocanu \cite{miller} published their monograph titled ``Differential subordination and univalent functions". Using the concept of  admissibility conditions as mentioned in \cite{miller}, Saliu et al. \cite{saliu} dealt with first and second-order differential implication for $p(z)\prec (1+z)^{\alpha}$ where $\alpha\in(0,1]$. Similarly, Mushtaq et al. \cite{mushtaq} obtained the conditions on $\beta$ so that the first-order differential subordinations like $1+\beta z p'(z)/p^j(z)\prec \sqrt{1+cz}$ and $1+\beta z p'(z)/p^j(z)\prec 1+\sqrt{2}z+z^2/2$ implies $p(z)\prec e^z$ for $j=0,1$ and $2$. Furthermore, Naz et al. \cite{adibastarlikenessexponential} and Goel and Kumar \cite{goel} have successfully deduced numerous first-order subordination implications employing admissible functions for the exponential and sigmoid functions, respectively. Also, Madaan et al. \cite{madaan} established a series of first and second-order differential subordination implications for the lemniscate of Bernoulli, employing admissibility conditions. For a comprehensive exploration of the most recent advancements in this direction, one may see \cite{goelhigher,nehadiffexpo,ckms,priyankabelg}. Hence, drawing inspiration from the aforementioned considerations, this article distinctly directs its attention towards investigating second and third-order subordination implications for the class of starlike functions pertinent to the eight-shaped domain and second-order subordination implications  for the class of starlike functions pertinent to the petal-shaped domain in Sections 2 and 3, respectively. Subsequently, we introduce fundamental definitions and notations relevant to differential subordination.

\begin{definition}\cite{antoninoandmiller}
Let $\phi(r,s,t,u;z):\mathbb{C}^4\times \mathbb{D}\rightarrow\mathbb{C}$ and $h(z)$ be a univalent function in $\mathbb{D}$, if $p$ is an analytic function in $\mathbb{D}$ satisfying the third-order differential subordination
\begin{equation}\label{8 def3}
    \phi(p(z),zp'(z),z^2p''(z),z^3p'''(z);z)\prec h(z)
\end{equation}
then $p$ is called the solution of the differential subordination. The univalent function $q$ is said to be a dominant of the solutions of the differential subordination if $p\prec q$ for all $p$ satisfying \eqref{8 def3}. A dominant $\bar{q}$ that satisfies $\bar{q}\prec q$ for all dominants $q$ of \eqref{8 def3} is said to be the best dominant of \eqref{8 def3}, which is unique upto the rotations of $\mathbb{D}$.
\end{definition}
Moreover, suppose $Q$ be the set of analytic and univalent functions $q\in\overline{\mathbb{D}}\setminus \mathbb{E}(q)$, where
\begin{equation*}
   \mathbb{E}(q)=\{\zeta\in \partial \mathbb{D}:\lim_{z\rightarrow\zeta}q(z)=\infty\}
\end{equation*}
such that $q'(\zeta)\neq 0$ for $\zeta\in \partial \mathbb{D}\setminus \mathbb{E}(q)$. The subclass of $Q$ for which $q(0)=a$ is denoted by $Q(a)$.

\begin{lemma}\cite{antoninoandmiller}\label{lemmaformk}
Let $z_0\in \mathbb{D}$ and $r_0=|z_0|$. Let $f(z)=\sum_{k=n}^{\infty}a_kz^k$ be continuous on $\overline{\mathbb{D}}_{r_0}$ and analytic on $\mathbb{D}\cup\{z_0\}$ with $f(z)\neq 0$ and $n\geq 2$. If $|f(z_0)|=\max \{|f(z)|:z\in \overline{\mathbb{D}}_{r_0}\}$ and $|f'(z_0)|=\max\{|f'(z)|:z\in \overline{\mathbb{D}}_{r_0}\}$, then there exist real constants $m$, $k$ and $l$ such that
\begin{equation*}
      \frac{z_0f'(z_0)}{f(z_0)}=m,\quad 1+\frac{z_0f''(z_0)}{f'(z_0)}=k\quad \text{and}\quad 2+\RE\bigg(\frac{z_0f'''(z_0)}{f''(z_0)}\bigg)=l
\end{equation*}
where $l\geq k\geq m\geq n\geq2$.
\end{lemma}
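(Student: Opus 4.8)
The plan is to exploit the two maximum hypotheses, on $|f|$ and on $|f'|$, by examining the real-valued functions $\theta\mapsto\RE\log f(r_0e^{i\theta})$ and $\theta\mapsto\RE\log f'(r_0e^{i\theta})$ on the circle $|z|=r_0$, supplemented by a radial maximum-modulus argument to anchor the lower bound. Since $f$ is analytic and non-constant on $\overline{\mathbb{D}}_{r_0}$, the maximum modulus principle forces the maximum of $|f|$ to be attained on the boundary circle, so I may take $|z_0|=r_0$ and write $z_0=r_0e^{i\theta_0}$, with $f(z_0)\neq0$ (the maximum being positive). Setting $Q(z)=zf'(z)/f(z)$ and $u(\theta)=\RE\log f(r_0e^{i\theta})$, differentiation using $dz/d\theta=iz$ gives $u'(\theta)=-\IM Q$ and $u''(\theta)=-\RE(zQ')$. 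The interior maximum of $u$ at $\theta_0$ yields $u'(\theta_0)=0$, whence $\IM Q(z_0)=0$, i.e. $m:=z_0f'(z_0)/f(z_0)\in\mathbb{R}$. Expanding $zQ'=Q+(zf''/f')Q-Q^2$ and using $m\in\mathbb{R}$, the second-order condition $u''(\theta_0)\le0$ becomes $m(\RE k-m)\ge0$, where $k:=1+z_0f''(z_0)/f'(z_0)$.

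Next I would run the identical argument with $f'$ in place of $f$. Writing $R(z)=zf''(z)/f'(z)$ and $v(\theta)=\RE\log f'(r_0e^{i\theta})$, the hypothesis that $|f'|$ is maximal at $z_0$ (with $f'(z_0)\neq0$) gives $v'(\theta_0)=0$, hence $R(z_0)=k-1\in\mathbb{R}$, so that $k$ itself is real. Expanding $zR'=R+(zf'''/f'')R-R^2$ and using that $k-1$ is real, the condition $v''(\theta_0)\le0$ reduces to $(k-1)(l-k)\ge0$, where $l:=2+\RE\bigl(z_0f'''(z_0)/f''(z_0)\bigr)$.

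To anchor the chain at $n$ I would use the vanishing of $f$ to order $n$ at the origin: factor $f(z)=z^n\phi(z)$ with $\phi$ analytic and $\phi(z_0)\neq0$. On $|z|=r_0$ one has $|\phi(z)|=|f(z)|/r_0^n\le|f(z_0)|/r_0^n=|\phi(z_0)|$, so by the maximum modulus principle $|\phi|$ also attains its maximum over $\overline{\mathbb{D}}_{r_0}$ at $z_0$. A radial variation, namely that $t\mapsto|\phi(te^{i\theta_0})|^2$ increases up to $t=r_0$, then gives $\RE\bigl(z_0\phi'(z_0)/\phi(z_0)\bigr)\ge0$; since $z_0f'(z_0)/f(z_0)=n+z_0\phi'(z_0)/\phi(z_0)$ and $m$ is real, this yields $m\ge n$. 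Assembling the inequalities: with $m\ge n\ge2>0$, the relation $m(\RE k-m)\ge0$ forces $\RE k\ge m$, and as $k$ is real, $k\ge m$; then $k-1\ge1>0$, so $(k-1)(l-k)\ge0$ forces $l\ge k$. Combining gives $l\ge k\ge m\ge n\ge2$, as required.

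I expect the main obstacle to be exactly this last anchoring step. The local second-derivative tests deliver only the reality of $m$ and $k$ and the ordering $l\ge k\ge m$; pinning the lower end at $n$ rather than merely at $0$ is the crux, and it is precisely where the order-$n$ vanishing of $f$ must be injected through the auxiliary function $\phi=f/z^n$ and a separate maximum-modulus argument. A secondary point requiring care is ensuring each logarithm is well defined near $\theta_0$ (so that the differentiations are legitimate), which follows from $f(z_0),f'(z_0),f''(z_0)\neq0$, a consequence of the respective maxima being attained at $z_0$ by non-constant analytic functions.
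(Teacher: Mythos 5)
This lemma is quoted in the paper from Antonino and Miller and is not proved there, so there is no in-paper argument to compare against; judged on its own, your proposal is correct and is essentially the standard proof of the cited result. The computations check out: with $Q=zf'/f$ and $u(\theta)=\log|f(r_0e^{i\theta})|$ one has $u'(\theta_0)=-\IM Q(z_0)=0$ and $u''(\theta_0)=-\RE(z_0Q'(z_0))\le 0$, and $z_0Q'(z_0)=m(k-m)$, giving $m(\RE k-m)\ge0$; the parallel computation for $f'$ gives $k\in\mathbb{R}$ and $(k-1)(l-k)\ge0$; and the factorization $f=z^n\phi$ together with the boundary estimate $|\phi|\le|f(z_0)|/r_0^n=|\phi(z_0)|$ on $|z|=r_0$, the maximum principle, and the one-sided radial derivative at the endpoint $t=r_0$ correctly upgrade $m\ge0$ to $m\ge n$. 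Two small points of hygiene: first, ``$t\mapsto|\phi(te^{i\theta_0})|^2$ increases up to $t=r_0$'' is not what you get — you only get that the endpoint $t=r_0$ is a maximum of that function on $[0,r_0]$, which still yields the needed one-sided inequality $\RE\bigl(z_0\phi'(z_0)/\phi(z_0)\bigr)\ge0$. Second, $f''(z_0)\neq0$ is \emph{not} a consequence of ``the respective maxima'' — there is no maximum hypothesis on $|f''|$; it should instead be deduced from $k-1\ge m-1\ge n-1\ge1>0$, which is available because $k$ real and $k\ge m$ are obtained using only $f(z_0)\neq0$ and $f'(z_0)\neq0$ (the latter from the positivity of $\max|f'|$). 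So the correct order is: establish $m\ge n$ and $k\ge m$ first, conclude $f''(z_0)\neq0$, and only then define $l$ and run the third angular test.
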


\begin{lemma}\cite[Theorem 2.3b]{miller}\label{10 millertheorem}
  Let $\phi\in \Psi_n[\Omega,q]$ with $q(0)=a$. If $p\in \mathcal{H}[a,n]$ satisfies
  \begin{equation*}
      \phi(p(z),zp'(z),z^2p''(z);z)\in \Omega,
  \end{equation*}
  then $p\prec q$.
\end{lemma}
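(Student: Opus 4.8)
The plan is to argue by contradiction, exploiting the boundary behaviour of $p$ relative to $q$ together with the admissibility encoded in the class $\Psi_n[\Omega,q]$. Recall that $\phi\in\Psi_n[\Omega,q]$ means precisely that $\phi(r,s,t;z)\notin\Omega$ whenever $r=q(\zeta)$, $s=m\zeta q'(\zeta)$ and $\RE(1+t/s)\geq m\,\RE(1+\zeta q''(\zeta)/q'(\zeta))$ for some $\zeta\in\partial\mathbb{D}\setminus\mathbb{E}(q)$, $z\in\mathbb{D}$ and $m\geq n$. Thus the strategy is to show that if $p\not\prec q$, one can locate a single point at which the triple $(p,zp',z^2p'')$ realises exactly such an inadmissible configuration, contradicting the standing hypothesis that its image always lies in $\Omega$.

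First I would suppose, for contradiction, that $p\not\prec q$. Since $p(0)=q(0)=a$ and $q$ is univalent on $\mathbb{D}$ with the regularity $q\in Q(a)$, standard continuity and normality arguments show that $p(\mathbb{D})$ cannot be contained in $q(\mathbb{D})$; hence there is a smallest radius $r_0\in(0,1)$ and a point $z_0$ with $|z_0|=r_0$ at which $p(z_0)$ first reaches the boundary curve $q(\partial\mathbb{D})$. Correspondingly there exists $\zeta_0\in\partial\mathbb{D}\setminus\mathbb{E}(q)$ with $p(z_0)=q(\zeta_0)$, and one defines a Schwarz-type function $w$ by $p(z)=q(w(z))$, so that $|w(z_0)|=1$ while $|w(z)|<1$ for $|z|<r_0$.

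The crux is to extract the correct relations among the derivatives at $z_0$; this is exactly the second-order analogue of Lemma~\ref{lemmaformk}. Applying a maximum-modulus argument to $w$ at $z_0$, one obtains a real constant $m\geq n$ for which $z_0p'(z_0)=m\zeta_0q'(\zeta_0)$ together with $\RE(1+z_0p''(z_0)/p'(z_0))\geq m\,\RE(1+\zeta_0q''(\zeta_0)/q'(\zeta_0))$. Setting $r=p(z_0)$, $s=z_0p'(z_0)$ and $t=z_0^2p''(z_0)$, these are precisely the hypotheses appearing in the definition of $\Psi_n[\Omega,q]$, so admissibility forces $\phi(r,s,t;z_0)\notin\Omega$. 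This contradicts $\phi(p(z),zp'(z),z^2p''(z);z)\in\Omega$ evaluated at $z_0$, whence $p\prec q$.

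The step I expect to be the main obstacle is establishing those derivative relations at the contact point, i.e. the second-order Jack--Miller--Mocanu lemma that produces the constant $m\geq n$ and the inequality on $\RE(1+z_0p''/p')$. Verifying the sign of this real part requires a careful differentiation of $p=q\circ w$ and the observation that $\zeta_0\overline{w'(z_0)}/|w'(z_0)|$ points along the outward normal, so that the second-order contribution has the required sign; this is the technical heart from which the admissibility definition is in effect reverse-engineered, and everything else reduces to a formal substitution.
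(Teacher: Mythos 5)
The paper offers no proof of this lemma: it is quoted verbatim as Theorem 2.3b of Miller and Mocanu's monograph, so there is no in-paper argument to compare against. Your reconstruction is the standard proof from that monograph --- contradiction at the first boundary-contact point, the second-order Jack--Miller--Mocanu lemma (Lemma 2.2d there) supplying $m\geq n$, $z_0p'(z_0)=m\zeta_0 q'(\zeta_0)$ and the inequality on $\RE\left(1+z_0p''(z_0)/p'(z_0)\right)$, followed by the admissibility condition --- and it is correct in outline, with the technical weight correctly placed on that contact-point lemma.
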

Note that if $\Omega\subset \mathbb{C}$ is a simply connected domain, then there exists a conformal mapping $h$ from $\mathbb{D}$ onto $\Omega=h(\mathbb{D})$. %, and $\Psi_n[h(\mathbb{D}),q]$ is denoted as $\Psi_n[h,q]$. 
Moreover, if the function $\phi(p(z),zp'(z),z^2p''(z);z)$ is analytic in $\mathbb{D}$, then $\phi(p(z),zp'(z),z^2p''(z);z)\in \Omega$ can be expressed in terms of subordination as $$\phi(p(z),zp'(z),z^2p''(z);z)\prec h(z).$$
For the Ma-Minda functions to satisfy the criteria for third-order differential subordination, Kumar and Goel \cite{goelhigher} modified the results of Antonino and Miller \cite{antoninoandmiller} in 2020. Moreover, they derived the first, second and third-order differential subordination implications for the class $\mathcal{S}^*_{SG}$, using these modified results through admissibility conditions. Recently, Verma and Kumar \cite{nehadiffexpo} have obtained results pertaining to second and third-order differential subordination for the class $\mathcal{S}^{*}_e$. Thus, this study is motivated to establish parameter-specific conditions enabling the validity of second and third-order subordination implications for the class $\mathcal{S}^{*}_s$, as well as second-order subordination implications for the class $\mathcal{S}^{*}_\rho$. 
%The following flow-chart provides the paper's outline:

The modified definition and lemma of the third-order differential subordination implication by Kumar and Goel \cite{goelhigher}, accompanied by some lemmas are presented below, which are required to deduce our results in the subsequent sections.

\begin{definition}\cite{goelhigher}
Let $\Omega$ be a set in $\mathbb{C}$, $q\in Q$ and $k\geq m\geq n\geq 2$. The class of admissible operators $\Psi_n[\Omega,q]$ consists of those $\phi:\mathbb{C}^4\times \mathbb{D}\rightarrow \mathbb{C}$ that satisfy the admissibility conditions
\begin{equation*}
 \phi(r,s,t,u;z)\notin \Omega\quad \text{whenever}\quad z\in \mathbb{D},    
\end{equation*}
\begin{equation*}
 r=q(\zeta),\quad s=m\zeta q'(\zeta), \quad \RE\bigg(1+\frac{t}{s}\bigg)\geq m\bigg(1+\RE \frac{\zeta q''(\zeta)}{q'(\zeta)}\bigg)
\end{equation*} 
and
\begin{equation*}
\RE \frac{u}{s}\geq m^2\RE \frac{\zeta^2 q'''(\zeta)}{q'(\zeta}+3m(k-1)\RE \frac{\zeta q''(\zeta)}{q'(\zeta)} \quad \text{for}\quad \zeta\in \partial \mathbb{D}\setminus \mathbb{E}(q).
\end{equation*}
%for $\zeta\in \partial \mathbb{D}\setminus \mathbb{E}(q)$.
\end{definition}

\begin{lemma}\cite{goelhigher} \label{8 firsttheoremthirdorder}
Let $p\in \mathcal{H}[a,n]$ with $m\geq n\geq2$, and let $q\in Q(a)$ such that it satisfies
\begin{equation*}
       \bigg|\frac{zp'(z)}{q'(\zeta)}\bigg|\leq m\quad \text{for}\quad z\in \mathbb{D}\quad \text{and}\quad \zeta\in \partial \mathbb{D}\setminus \mathbb{E}(q).
\end{equation*}
If $\Omega$ is a set in $\mathbb{C}$, $\phi\in \Psi_n[\Omega,a]$ and
\begin{equation*}
\phi(p(z),zp'(z),z^2p''(z),z^3p'''(z);z)\subset \Omega,
\end{equation*}
then $p\prec q$.
\end{lemma}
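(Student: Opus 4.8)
The plan is to argue by contradiction, reducing the third-order subordination to the extremal-point machinery of Lemma \ref{lemmaformk}. Suppose that $p\not\prec q$. Since $q$ is univalent on $\mathbb{D}$ with $q(0)=a=p(0)$ and $q'(0)\neq0$, the composition $w(z)=q^{-1}(p(z))$ is well defined and analytic in a neighbourhood of the origin, with $w(0)=0$; because $p\in\mathcal{H}[a,n]$, one has $w(z)=c_nz^n+c_{n+1}z^{n+1}+\cdots$ with $n\geq2$. Subordination $p\prec q$ is equivalent to $|w(z)|<1$ throughout $\mathbb{D}$, so its failure forces a smallest radius $r_0\in(0,1)$ and a point $z_0$ with $|z_0|=r_0$ at which $|w|$ attains the value $1$ on $\overline{\mathbb{D}}_{r_0}$; set $\zeta_0=w(z_0)\in\partial\mathbb{D}\setminus\mathbb{E}(q)$.

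First I would apply Lemma \ref{lemmaformk} to $f=w$ at the point $z_0$. This supplies real constants $m,k,l$ with $l\geq k\geq m\geq n\geq2$ such that
\begin{equation*}
\frac{z_0w'(z_0)}{w(z_0)}=m,\qquad 1+\frac{z_0w''(z_0)}{w'(z_0)}=k,\qquad 2+\RE\frac{z_0w'''(z_0)}{w''(z_0)}=l.
\end{equation*}
Equivalently $z_0w'(z_0)=m\zeta_0$, $z_0^2w''(z_0)=(k-1)m\zeta_0$, and $z_0^3w'''(z_0)=(k-1)m\zeta_0\bigl(z_0w'''(z_0)/w''(z_0)\bigr)$, the last factor having real part $l-2$.

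Next I would differentiate the identity $p(z)=q(w(z))$ up to third order, evaluate at $z_0$, and substitute the relations above. Writing $r=p(z_0)$, $s=z_0p'(z_0)$, $t=z_0^2p''(z_0)$ and $u=z_0^3p'''(z_0)$, a routine computation gives $r=q(\zeta_0)$, $s=m\zeta_0q'(\zeta_0)$, and
\begin{equation*}
1+\frac{t}{s}=k+m\,\frac{\zeta_0q''(\zeta_0)}{q'(\zeta_0)},
\end{equation*}
\begin{equation*}
\frac{u}{s}=m^2\frac{\zeta_0^2q'''(\zeta_0)}{q'(\zeta_0)}+3m(k-1)\frac{\zeta_0q''(\zeta_0)}{q'(\zeta_0)}+(k-1)\frac{z_0w'''(z_0)}{w''(z_0)}.
\end{equation*}
Taking real parts and using $k\geq m$ in the first relation yields $\RE(1+t/s)\geq m\bigl(1+\RE(\zeta_0q''(\zeta_0)/q'(\zeta_0))\bigr)$, while in the second the leftover term equals $(k-1)(l-2)\geq0$ since $k\geq2$ and $l\geq2$, so $\RE(u/s)$ dominates the admissibility bound. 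Thus $(r,s,t,u)$ meets every hypothesis in the definition of $\Psi_n[\Omega,q]$, forcing $\phi(r,s,t,u;z_0)\notin\Omega$; this contradicts $\phi(p(z),zp'(z),z^2p''(z),z^3p'''(z);z)\in\Omega$ at $z_0\in\mathbb{D}$, and hence $p\prec q$.

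The hard part will be justifying the application of Lemma \ref{lemmaformk}, whose hypotheses demand that $|w|$ and $|w'|$ attain their maxima on $\overline{\mathbb{D}}_{r_0}$ at the \emph{same} boundary point $z_0$ — a requirement that is automatic in the second-order theory but genuinely delicate at third order. This is exactly where the standing assumption $|zp'(z)/q'(\zeta)|\leq m$ enters: it controls the growth of $w'$ relative to $q'$ and thereby secures the simultaneous maximization needed to invoke the lemma. Verifying this compatibility carefully, rather than carrying out the algebra of differentiating $q\circ w$, is the true crux of the argument.
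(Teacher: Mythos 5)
The paper itself gives no proof of this lemma --- it is imported verbatim from \cite{goelhigher} (a modification of Antonino and Miller \cite{antoninoandmiller}) --- so the comparison is against the standard argument of that source, and your outline reproduces it faithfully: contradiction, an extremal point $z_0$ for $w=q^{-1}\circ p$ on $\overline{\mathbb{D}}_{r_0}$, Lemma \ref{lemmaformk} to produce $m,k,l$, the chain-rule evaluation of $r,s,t,u$, and the admissibility condition to force $\phi(r,s,t,u;z_0)\notin\Omega$. Your algebra is correct: $z_0^2w''(z_0)=(k-1)m\zeta_0$ gives $1+t/s=k+m\zeta_0 q''(\zeta_0)/q'(\zeta_0)$, the residual term in $u/s$ is indeed $(k-1)\,z_0w'''(z_0)/w''(z_0)$ with real part $(k-1)(l-2)\geq 0$, and $k\geq m$ closes the second-order inequality.

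The one genuine gap is the step you explicitly defer, and it is not a formality: Lemma \ref{lemmaformk} demands that $|w|$ \emph{and} $|w'|$ be maximized over $\overline{\mathbb{D}}_{r_0}$ at the \emph{same} point $z_0$, and you only assert that the hypothesis $|zp'(z)/q'(\zeta)|\leq m$ "secures" this. As stated, that hypothesis compares $|zp'(z)|$ with $|q'(\zeta)|$ for $\zeta$ on the \emph{boundary} of $\mathbb{D}$, whereas $w'(z)=p'(z)/q'(w(z))$ involves $q'$ at the \emph{interior} point $w(z)$; converting the one bound into the other (and checking that the constant $m$ produced by Lemma \ref{lemmaformk}, namely $|z_0p'(z_0)|/|q'(\zeta_0)|$, is consistent with the $m$ parametrizing the admissibility class $\Psi_n[\Omega,q]$) requires a further argument using $q\in Q(a)$ and the behaviour of $q'$ near $\partial\mathbb{D}\setminus\mathbb{E}(q)$. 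This is exactly the point at which the third-order theory is more delicate than the second-order one and the reason \cite{goelhigher} reworks the Antonino--Miller framework for Ma--Minda targets; a complete proof must carry out this verification rather than flag it. Everything else in your outline is sound.
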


\begin{lemma}\cite{goel}\label{prilemma42}
  Let $r_0\approx 0.546302$ be the positive root of the equation $r^2+2 \cot(1)r-1=0$. Then
  \begin{equation*}
      \bigg|\log \bigg(\frac{1+z}{1-z}\bigg)\bigg|\geq 1\quad \text{on}\quad |z|=R\quad \text{if and only if}\quad R\geq r_0.
  \end{equation*}
\end{lemma}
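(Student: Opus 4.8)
The plan is to compute the minimum modulus of $F(z):=\log\frac{1+z}{1-z}$ over the circle $|z|=R$ and show it equals $2\arctan R$, from which the stated equivalence follows at once. First I would record that the Möbius map $M(z)=\frac{1+z}{1-z}$ carries $|z|=R$ onto a circle $\Gamma$ lying in the right half-plane and symmetric about the real axis, with center $c=(1+R^2)/(1-R^2)$ and radius $\rho=2R/(1-R^2)$; its real-axis crossings are $(1-R)/(1+R)$ and $(1+R)/(1-R)$. Since $\Gamma$ avoids the origin, $\log$ is single-valued near $\Gamma$, so the problem reduces to minimising $|\log w|$ over $w\in\Gamma$.

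Next I would exploit symmetry. Writing $F=u+iv$ with $z=Re^{i\theta}$, one has $u(\theta)=\tfrac12\log\frac{1+2R\cos\theta+R^2}{1-2R\cos\theta+R^2}$ and $v(\theta)=\arctan\frac{R\sin\theta}{1+R\cos\theta}+\arctan\frac{R\sin\theta}{1-R\cos\theta}$. Because the two real-axis crossings of $\Gamma$ are reciprocals and $\Gamma$ is symmetric about $\mathbb{R}$, $\Gamma$ is invariant under $w\mapsto 1/w$; hence $\log\Gamma$ is symmetric about both coordinate axes and about the origin, and it suffices to treat $\theta\in[0,\pi/2]$. The two natural critical points are $\theta=0$ (where $z=R$, $v=0$, $|F|=\log\frac{1+R}{1-R}$) and $\theta=\pi/2$ (where $z=iR$, $u=0$, $|F|=2\arctan R$). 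Since $\log\frac{1+R}{1-R}=2\operatorname{arctanh}R>2\arctan R$ on $(0,1)$ (compare power series), the value at $\theta=\pi/2$ is the smaller of the two.

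The crux is to prove that $\theta=\pi/2$ furnishes the \emph{global} minimum on $[0,\pi/2]$. Set $g(\theta)=|F(Re^{i\theta})|^2=u^2+v^2$. Using $\frac{d}{d\theta}F=izF'(z)=\frac{2iz}{1-z^2}$ one finds $u_\theta=-2R(1+R^2)\sin\theta/D$ and $v_\theta=2R(1-R^2)\cos\theta/D$ with $D=1-2R^2\cos 2\theta+R^4>0$, whence
\[
g'(\theta)=\frac{4R}{D}\Big(-(1+R^2)\sin\theta\,u(\theta)+(1-R^2)\cos\theta\,v(\theta)\Big).
\]
The main obstacle is showing that the bracketed quantity $H(\theta)$ is negative on $(0,\pi/2)$; one checks $H(0)=H(\pi/2)=0$ and $u,v>0$ there, so this amounts to the elementary but delicate inequality $(1+R^2)\sin\theta\,u(\theta)>(1-R^2)\cos\theta\,v(\theta)$. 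I would establish it using the expansion $\log\frac{1+z}{1-z}=2\sum_{k\ge0}z^{2k+1}/(2k+1)$, which presents $u$ and $v$ as cosine and sine series in $\theta$ amenable to term-by-term comparison, or alternatively by ruling out an interior zero of $H$ via a monotonicity analysis of $H'$. Granting this, $g$ decreases on $[0,\pi/2]$, so $\min_{|z|=R}|F|=2\arctan R$.

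Finally, since $2\arctan R$ is strictly increasing in $R$, the inequality $\big|\log\frac{1+z}{1-z}\big|\ge 1$ on $|z|=R$ holds precisely when $2\arctan R\ge 1$, that is $R\ge\tan(1/2)=:r_0$. It remains to identify $r_0$ with the positive root of $r^2+2\cot(1)r-1=0$: the double-angle identity $\cot 1=\big(1-\tan^2(1/2)\big)/\big(2\tan(1/2)\big)$ gives $2\cot(1)\tan(1/2)=1-\tan^2(1/2)$, so $r_0=\tan(1/2)$ indeed satisfies $r_0^2+2\cot(1)r_0-1=0$, numerically $r_0\approx 0.546302$, completing the proof.
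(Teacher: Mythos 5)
First, a point of comparison: the paper does not prove this lemma at all --- it is quoted verbatim from Goel and Kumar \cite{goel} --- so there is no in-paper argument to measure yours against, and your attempt has to stand on its own. Much of it does: the reduction to minimising $|\log w|$ over the Möbius image circle $\Gamma$, the symmetry of $\Gamma$ under $w\mapsto 1/w$ and conjugation, the values $2\operatorname{arctanh}R$ at $\theta=0$ and $2\arctan R$ at $\theta=\pi/2$, the derivative formula $g'(\theta)=\tfrac{4R}{D}\bigl(-(1+R^2)\sin\theta\,u+(1-R^2)\cos\theta\,v\bigr)$, and the identification of $\tan(1/2)\approx 0.546302$ with the positive root of $r^2+2\cot(1)r-1=0$ via the double-angle identity are all correct.

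The gap is precisely the step you yourself label the crux and then only sketch: that $H(\theta)<0$ on $(0,\pi/2)$, equivalently that the global minimum of $|F|$ on $|z|=R$ is attained at $z=\pm iR$. Neither of your two proposed routes closes as stated. Inserting $\log\tfrac{1+z}{1-z}=2\sum_{k\ge 0}z^{2k+1}/(2k+1)$ and using product-to-sum formulas gives $-H(\theta)=\sum_{j\ge 1}\tfrac{4R^{2j+1}}{4j^2-1}\sin(2j\theta)$; the coefficients are positive, but $\sin(2j\theta)$ changes sign on $(0,\pi/2)$ for $j\ge 2$, so there is no term-by-term comparison. Dominating the tail by the leading term via $|\sin(2j\theta)|\le j\sin(2\theta)$ reduces the claim to $\sum_{j\ge 2}jR^{2j-2}/(4j^2-1)<1/3$, which fails as $R\to 1^-$ since $\sum_j j/(4j^2-1)$ diverges; so a genuinely new idea (a positivity theorem for this sine series, or a direct geometric argument about the curve $\log\Gamma$) is needed to cover all $R\in(0,1)$. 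Separately, the lemma is invoked in the paper with $R=|\beta_1 s+\beta_2 t|$ bounded only from below, so values $R\ge 1$ must be handled, and your description of $\Gamma$ (centre $(1+R^2)/(1-R^2)$, a circle in the right half-plane) is valid only for $R<1$. Until $H<0$ is actually proved and the case $R\ge 1$ is addressed, the ``if'' direction of the lemma remains open in your write-up.
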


\begin{lemma}\cite[Lemma 4, Pg No. 192]{goelhigher}\label{prilemma4}
For any complex number $z$, we have
\begin{equation*}
    |\log (1+z)|\geq 1\quad \text{if and only if} \quad |z|\geq e-1.
\end{equation*}
\end{lemma}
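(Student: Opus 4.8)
The plan is to reduce the modular inequality for $\log(1+z)$ to the elementary exponential estimate $|e^{w}-1|\le e^{|w|}-1$. Writing $w=\log(1+z)$ for the principal branch, one has $e^{w}=1+z$ identically, hence $z=e^{w}-1$ and $|z|=|e^{w}-1|$. Expanding the entire function $e^{w}-1=\sum_{k\ge 1}w^{k}/k!$ and applying the triangle inequality termwise gives
\[
|z|=|e^{w}-1|\le\sum_{k\ge1}\frac{|w|^{k}}{k!}=e^{|w|}-1 .
\]
This is the sole analytic ingredient the argument needs; everything afterwards is monotonicity of real-variable functions, and crucially the estimate is insensitive to the choice of branch since it only invokes $e^{\log(1+z)}=1+z$.

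First I would rearrange this into $1+|z|\le e^{|\log(1+z)|}$ and take logarithms to obtain the clean pointwise bound
\[
|\log(1+z)|\ \ge\ \log(1+|z|),
\]
valid for every $z$ for which the principal logarithm is defined. Since equality holds whenever $z$ is a nonnegative real number (there $\log(1+z)$ is itself real and positive), this bound is sharp; in fact it identifies $\log(1+R)$ as the exact minimum of $|\log(1+z)|$ over each circle $\{|z|=R\}$, attained at the real point $z=R$.

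Next I would insert the threshold. If $|z|\ge e-1$, then by monotonicity of $\log$ and the displayed bound, $|\log(1+z)|\ge\log(1+|z|)\ge\log\!\big(1+(e-1)\big)=\log e=1$, which is the asserted conclusion. For the reverse, the point $z=e-1$ shows the constant is best possible: there $|\log(1+z)|=\log e=1$, while for real $z\in[0,e-1)$ one has $|\log(1+z)|=\log(1+z)<1$, so no radius smaller than $e-1$ can force $|\log(1+z)|\ge1$. Read as the circle-minimum characterization $\min_{|z|=R}|\log(1+z)|\ge1\iff R\ge e-1$ (the same formulation as in Lemma~\ref{prilemma42}), these two observations together yield the equivalence.

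The one point demanding care — and what I expect to be the main obstacle — is the interpretation of the ``if and only if.'' The naive pointwise forward implication $|\log(1+z)|\ge1\Rightarrow|z|\ge e-1$ genuinely fails: near $z=-1$ the quantity $|1+z|$ is tiny, so $|\log(1+z)|$ is large even though $|z|<e-1$. The correct and usable content, which the argument above delivers, is the lower bound $|\log(1+z)|\ge\log(1+|z|)$ together with its sharpness on the positive real axis; this is precisely the form consumed by the admissibility estimates in the subsequent sections, so framing the proof around the circle-minimum reading avoids the spurious converse while losing nothing needed downstream.
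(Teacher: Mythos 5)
Your argument is correct, and there is in fact no in-paper proof to compare it against: the lemma is imported verbatim from the cited source (Kumar--Goel, Lemma~4), so the paper treats it as a black box. The chain you use --- $w=\log(1+z)$, $|z|=|e^{w}-1|\le\sum_{k\ge1}|w|^{k}/k!=e^{|w|}-1$, hence $|\log(1+z)|\ge\log(1+|z|)$ --- is the standard and presumably the original route, and it proves the forward implication $|z|\ge e-1\Rightarrow|\log(1+z)|\ge1$ completely, with sharpness at $z=e-1$. Your remark about the converse is a genuine and worthwhile catch rather than a defect of your proof: as literally transcribed (``for any complex number $z$''), the pointwise ``only if'' is false, since for $z$ near $-1$ one has $|z|<e-1$ while $|\log(1+z)|$ is arbitrarily large; the statement is only true in the circle-minimum reading, exactly the form in which the companion Lemma~\ref{prilemma42} is phrased. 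This costs nothing downstream, because every invocation of the lemma in Sections~2 and~3 uses only the forward direction: a lower bound $|\beta_1 s+\beta_2 t|\ge e-1$ (or its third-order analogue) is converted into $|\log(1+\beta_1 s+\beta_2 t)|\ge1$. So your proof delivers precisely the content the paper consumes.
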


\section{The Class $\mathcal{S}^{*}_s$}

\noindent We begin our study with the function $q(z):=1+\sin z$ and define the admissibility class $\Psi[\Omega,q]$, where $\Omega\subset \mathbb{C}$. We know that $q(z)$ is analytic and univalent on $\overline{\mathbb{D}}$, $q(0)=1$ and it maps $\mathbb{D}$ onto the domain $\Omega_{s}:=\{w \in \mathbb{C}:|\arcsin (w -1)|<1\}$. Since $\mathbb{E}(q)=\phi$, for $\zeta\in \partial \mathbb{D}\setminus \mathbb{E}(q)$ if and only if $\zeta=e^{i\theta}$ for $\theta\in[0,2\pi]$. Now, consider
\begin{equation}
    |q'(\zeta)|=\sqrt{\cosh^2(\sin\theta)-\sin^2(\cos\theta)}=:n_1(\theta)\label{10s 1}
\end{equation}
and $n_1(\theta)$ achieves its minimum value at $\theta=0$, denoted by
\begin{equation}\label{10s nu0}
   \nu_0:= n_1(0)=\sqrt{1-\sin^{2} 1}\approx 0.540302.
\end{equation}
It is evident that $\min |q'(\zeta)|>0$, which implies that $q\in Q(1)$, and consequently, the admissibility class $\Psi[\Omega,q]$ is well-defined. While considering $|\zeta|=1$, we observe that $q(\zeta)\in q(\partial \mathbb{D})=\partial \Omega_{s} = \{w  \in \mathbb{C}: |\arcsin(w -1)| = 1\}$. Consequently, $|\arcsin (q(\zeta)-1)| = 1$ and $\arcsin (q(\zeta)-1) = e^{i\theta}$ for $ \theta\in [0, 2\pi]$, which implies that $q(\zeta) = 1+\sin\zeta$. Moreover, $\zeta q'(\zeta)=e^{i\theta}\cos(e^{i\theta})$ and 
\begin{equation}
\frac{\zeta q''(\zeta)}{q'(\zeta)}=e^{i\theta}\tan(e^{i\theta}).\label{10s 2}
\end{equation}
By comparing the real parts on both sides of \eqref{10s 2}, we obtain
\begin{equation}
    \RE\bigg(\frac{\zeta q''(\zeta)}{q'(\zeta)}\bigg)=\frac{-\cos \theta \sin(2\cos\theta)+\sin \theta \sinh (2\sin \theta)}{\cos (2\cos \theta)+\cosh(2\sin \theta)}=:n_2(\theta).\label{10s 3} 
\end{equation}
The function $n_2(\theta)$ as defined in \eqref{10s 3}, attains its minimum at $\theta=0$, denoted by
\begin{equation}
    \nu_1:=n_2(0)=-\frac{\sin 2}{1+\cos 2}\approx -1.55741.\label{10s nu1}
\end{equation} 
Moreover, the class $\Psi[\Omega, 1+\sin z]$ is precisely defined as the class of all functions $\phi: \mathbb{C}^3 \times \mathbb{D} \rightarrow \mathbb{C}$ that satisfy the following conditions:
\begin{equation*}
    \phi(r,s,t;z)\notin \Omega \quad \text{for}\quad z\in \mathbb{D},\quad \theta\in[0,2\pi] \quad\text{and}\quad m\geq 1,
\end{equation*}
whenever
\begin{equation}
    r=q(\zeta)=1+\sin(e^{i\theta});\quad s=m\zeta q'(\zeta)=me^{i\theta}\cos(e^{i\theta});\quad \RE\bigg(1+\frac{t}{s}\bigg)\geq m(1+n_2(\theta)). \label{10s 4}
\end{equation}

Furthermore, for $q(z)=1+\sin z$, we have
\begin{equation*}
    \zeta^2\frac{q'''(\zeta)}{q'(\zeta)}=-e^{2i\theta}.
\end{equation*}
On comparing the real parts of both the sides,
\begin{equation*}
    \RE\bigg(\zeta^2\frac{q'''(\zeta)}{q'(\zeta)}\bigg)=-\cos 2\theta=:n_3(\theta). 
\end{equation*}
The minimum value of $n_3(\theta)$ is $-1$, which is attained at $\theta=0$. Thus, if $\phi:\mathbb{C}^4\times \mathbb{D}\rightarrow\mathbb{C}$ then $\phi\in \Psi[\Omega,1+\sin z]$, provided $\phi$ satisfies the following conditions:
\begin{equation*}
    \phi(r,s,t,u;z)\notin \Omega\quad \text{for}\quad z\in \mathbb{D},\quad \theta\in[0,2\pi] \quad\text{and}\quad k\geq m\geq2
\end{equation*}
whenever
\begin{equation*}
r=q(\zeta)=1+\sin{e^{i\theta}};\quad s=m\zeta q'(\zeta)=me^{i\theta}\cos(e^{i\theta});
\end{equation*}
\begin{equation*}
   \RE\bigg(1+\frac{t}{s}\bigg)\geq m(1+n_2(\theta)) \quad\text{and}\quad \RE \frac{u}{s}\geq m^{2}n_3(\theta)+3m(k-1)n_2(\theta).
\end{equation*}

We now find conditions on the parameters $\beta_1$ and $\beta_2$ for different choices of $h(z)$ namely $\sqrt{1+z}$, $(1+Cz)/(1+Dz)$, $2/(1+e^{-z})$, $z+\sqrt{1+z^2}$, $1+\sinh^{-1}z$, $1+ze^z$, $e^z$ and $1+\sin z$, so that the following second order differential subordination implication holds:
\begin{equation*}
    1+\beta_1 zp'(z)+\beta_2 z^2p''(z)\prec h(z) \implies p(z)\prec 1+\sin z
\end{equation*}
Note that the values of $\nu_0$ and $\nu_1$ are given in \eqref{10s nu0} and \eqref{10s nu1} respectively, which are widely used in the upcoming results. We begin with the following Theorem, for the case when $h(z) = \sqrt{1+z}.$

\begin{theorem}\label{10s secondorder1}
Let $\beta_1$, $\beta_2>0$ and $\nu_0({\beta_1}+ \beta_2 \nu_1)(\nu_0(\beta_1+\beta_2 \nu_1)-2)\geq 1$. Let $p$ be analytic function in $\mathbb{D}$ with $p(0)=1$ and
\begin{equation*}
      1+\beta_1 zp'(z)+\beta_2 z^2 p''(z)\prec \sqrt{1+z}.
\end{equation*}
Then $p(z)\prec 1+\sin z$.
\end{theorem}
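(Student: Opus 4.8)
The plan is to recast the hypothesis as an admissibility condition and then invoke the Miller--Mocanu lemma (Lemma \ref{10 millertheorem}). I would take the dominant $q(z)=1+\sin z$, put $h(z)=\sqrt{1+z}$ and $\Omega=h(\mathbb{D})$. Since $w=\sqrt{1+z}$ is equivalent to $|w^{2}-1|<1$, the target region is $\Omega=\{w\in\mathbb{C}:|w^{2}-1|<1\}$ with $\partial\Omega=\{|w^{2}-1|=1\}$, so a point lies outside $\Omega$ exactly when $|w^{2}-1|\geq 1$. Defining $\psi(r,s,t;z)=1+\beta_{1}s+\beta_{2}t$, the differential expression is $\psi(p(z),zp'(z),z^{2}p''(z);z)$, and the hypothesis reads $\psi(p,zp',z^{2}p'';z)\prec h$, i.e. $\psi(\cdots)\in\Omega$. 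It therefore suffices to show $\psi\in\Psi[\Omega,q]$, for then Lemma \ref{10 millertheorem} gives $p\prec q=1+\sin z$.

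To verify admissibility I must check that $\psi(r,s,t;z)\notin\Omega$, i.e. $|\psi^{2}-1|\geq 1$, for every $z\in\mathbb{D}$, every $m\geq 1$, and every admissible triple with $r=q(\zeta)$, $s=m\zeta q'(\zeta)$ and $\RE(1+t/s)\geq m(1+n_{2}(\theta))$, where $\zeta=e^{i\theta}$. The heart of the matter is a lower estimate for $|\psi-1|=|\beta_{1}s+\beta_{2}t|$. Writing $\beta_{1}s+\beta_{2}t=s\bigl(\beta_{1}+\beta_{2}(t/s)\bigr)$ and using $|w|\geq\RE w$, I would obtain $|\psi-1|\geq|s|\,\bigl(\beta_{1}+\beta_{2}\RE(t/s)\bigr)$ as soon as the second factor is nonnegative. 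Now $|s|=m\,n_{1}(\theta)\geq\nu_{0}$ by \eqref{10s 1} and \eqref{10s nu0}, while the admissibility inequality yields $\RE(t/s)\geq m(1+n_{2}(\theta))-1$; together with $n_{2}(\theta)\geq\nu_{1}$ from \eqref{10s nu1}, this should collapse at the extremal configuration $\theta=0$, $m=1$ to $|\psi-1|\geq\nu_{0}(\beta_{1}+\beta_{2}\nu_{1})=:A$. I note that the hypothesis $A(A-2)\geq 1$ forces $A\geq 1+\sqrt{2}>2$, in particular $\beta_{1}+\beta_{2}\nu_{1}>0$, which is precisely what makes the real-part estimate legitimate.

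With $|\psi-1|\geq A$ in hand the conclusion is immediate: since $A>2$,
\[
|\psi+1|=|(\psi-1)+2|\geq|\psi-1|-2\geq A-2>0,
\]
so that
\[
|\psi^{2}-1|=|\psi-1|\,|\psi+1|\geq A(A-2)\geq 1 .
\]
Hence $\psi(r,s,t;z)\notin\Omega$, that is $\psi\in\Psi[\Omega,q]$, and Lemma \ref{10 millertheorem} delivers $p(z)\prec 1+\sin z$.

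I expect the main obstacle to be the uniform lower bound $|\beta_{1}s+\beta_{2}t|\geq A$: it must hold simultaneously for all $m\geq 1$ and all $\theta\in[0,2\pi]$, and the delicate point is to justify that the minimum is controlled by the extremal values $\nu_{0}=\min_{\theta}n_{1}(\theta)$ and $\nu_{1}=\min_{\theta}n_{2}(\theta)$, both attained at $\theta=0$. The interaction between $m$ and the factor $1+n_{2}(\theta)$, whose minimum $1+\nu_{1}$ is negative, is what demands care, and it is here that the sign condition $\beta_{1}+\beta_{2}\nu_{1}>0$ extracted from $A(A-2)\geq 1$ becomes essential. As a useful auxiliary observation I would also record the elementary geometric fact $\Omega\subseteq\{w:|w-1|<1\}$ (which follows from $|w^{2}-1|<1\Rightarrow(x^{2}+y^{2})^{2}<2(x^{2}-y^{2})$ on the right lobe), so that $|\psi-1|\geq 1$ already forces $\psi\notin\Omega$; the sharper Cassini-type estimate $A(A-2)\geq 1$ is, however, the form that matches the stated hypothesis and that I would use to close the argument.
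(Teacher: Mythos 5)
Your proposal follows essentially the same route as the paper: the same admissibility class $\Psi[\Omega,1+\sin z]$, the same factorization $|\psi^{2}-1|=|\psi-1|\,|\psi+1|\geq|\psi-1|(|\psi-1|-2)$, the same lower bound $|\beta_{1}s+\beta_{2}t|\geq|s|\bigl(\beta_{1}+\beta_{2}\RE(t/s)\bigr)$ reduced via $|s|=m\,n_{1}(\theta)$ and the admissibility inequality to the extremal value $\nu_{0}(\beta_{1}+\beta_{2}\nu_{1})$ at $m=1$, $\theta=0$, followed by Lemma \ref{10 millertheorem}. The one step you flag as delicate --- that $m\,n_{1}(\theta)\bigl(\beta_{1}+\beta_{2}(m(1+n_{2}(\theta))-1)\bigr)$ is minimized at $m=1$ even though $1+\nu_{1}<0$ --- is exactly the step the paper dispatches with the bare phrase ``Since $m\geq 1$'', so your treatment matches the paper's in both substance and level of rigor.
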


\begin{proof}
Suppose $h(z)=\sqrt{1+z}$ for $z\in \mathbb{D}$. Then, $h(\mathbb{D})=\{w \in \mathbb{C}:|w ^2-1|<1\}=:\Omega$. Let $\phi:\mathbb{C}^3\times \mathbb{D}\rightarrow \mathbb{C}$ be defined as $\phi(r,s,t;z)=1+\beta_1 s+\beta_2 t$. It is clear that $\phi\in \Psi[\Omega,1+\sin z]$ provided $\phi(r,s,t,;z)\notin \Omega$ for $z\in \mathbb{D}$. Note that
\begin{align*}
        |(\phi(r,s,t;z))^2-1|&=|(1+\beta_1 s+\beta_2 t)^2-1|\\
        &\geq |\beta_1 s+\beta_2 t|(|\beta_1 s+\beta_2 t|-2)\\
        &\geq |\beta_1 s|\RE\bigg(1+\frac{\beta_2}{\beta_1}\frac{t}{s}\bigg)\bigg(|\beta_1 s|\RE\bigg(1+\frac{\beta_2}{\beta_1}\frac{t}{s}\bigg)-2\bigg)\\
         &\geq L(L-2),
\end{align*}
where $L= m\beta_1 n_1(\theta)\RE(1+\beta_2(mn_2(\theta)+m-1)/{\beta_1})$. Here $n_1(\theta)$ and $n_2(\theta)$ are given in \eqref{10s 1} and \eqref{10s 3}, respectively. Since $m\geq 1$, we have
\begin{align*}
       |(\phi(r,s,t;z))^2-1|&\geq \beta_1 n_1(\theta)\RE\bigg(1+\frac{\beta_2}{\beta_1}n_2(\theta)\bigg)\bigg(\beta_1 n_1(\theta)\RE\bigg(1+\frac{\beta_2}{\beta_1}n_2(\theta)\bigg)-2\bigg)\\ 
       &\geq \nu_0(\beta_1+\beta_2 \nu_1) (\nu_0(\beta_1+\beta_2 \nu_1)-2)\\
       %&=\frac{(2\beta_1-\beta)(2\beta_1-\beta-4\sqrt{2})}{8}\\
       &\geq 1.
\end{align*}
Therefore, $\phi(r,s,t;z)\notin \Omega$ and hence $\phi\in \Psi[\Omega,1+\sin z]$. Now, the result follows at once by an application of Lemma \ref{10 millertheorem}.
\end{proof}

By considering the function $p(z)=zf'(z)/f(z)$ in Theorem \ref{10s secondorder1}, we derive the following corollary:

\begin{corollary}\label{10s corollaryfirst}
Suppose $\beta_1$, $\beta_2>0$ and $f\in\mathcal{A}$. Let
\begin{align}
 S_f(z)&:=1+\beta_1(S_2-{S_1}^2+S_1)+\beta_2(S_3+2S_2+2{S_1}^3-2{S_1}^2-3S_1S_2),  \label{10s corollary21}
\end{align}
where 
\begin{equation}
    S_1=\frac{zf'(z)}{f(z)}, \quad S_2=\frac{z^2f''(z)}{f(z)}\quad \text{and}\quad S_3=\frac{z^3f'''(z)}{f(z)}. \label{10s corollary21coefficients}
\end{equation}
Then $f\in \mathcal{S}^{*}_{s}$ provided
    $S_{f}(z)\prec \sqrt{1+z}$ and $\nu_0({\beta_1}+ \beta_2 \nu_1)(\nu_0(\beta_1+\beta_2 \nu_1)-2)\geq 1$.
\end{corollary}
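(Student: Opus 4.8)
The plan is to reduce the corollary directly to Theorem \ref{10s secondorder1} via the substitution $p(z) = zf'(z)/f(z)$. Since $f \in \mathcal{A}$, the function $p$ is analytic near the origin with $p(0) = 1$, and the very hypothesis $S_f(z) \prec \sqrt{1+z}$ presupposes that $p$ is analytic on all of $\mathbb{D}$ (i.e. that $f$ is nonvanishing on $\mathbb{D}\setminus\{0\}$). The entire task is then to verify the algebraic identity
\[
S_f(z) = 1 + \beta_1 z p'(z) + \beta_2 z^2 p''(z),
\]
after which the conclusion $p(z) \prec 1 + \sin z$, that is $f \in \mathcal{S}^{*}_s$, is immediate from the theorem.

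To establish this identity, I would compute $zp'(z)$ and $z^2 p''(z)$ in terms of the quantities $S_1, S_2, S_3$ from \eqref{10s corollary21coefficients} by logarithmic differentiation. Writing $\log p = \log z + \log f' - \log f$ and differentiating gives $zp'/p = 1 + zf''/f' - zf'/f$; since $p = S_1$ and $zf''/f' = S_2/S_1$, this yields
\[
zp'(z) = S_2 - S_1^2 + S_1,
\]
which is exactly the bracket multiplying $\beta_1$ in \eqref{10s corollary21}. For the second-order term I would use the relation $z^2 p'' = z(zp')' - zp'$. Differentiating $S_1$ and $S_2$ logarithmically supplies the auxiliary identities $zS_1' = S_2 - S_1^2 + S_1$ and $zS_2' = S_3 + 2S_2 - S_1 S_2$, the latter coming from $zS_2'/S_2 = 2 + zf'''/f'' - zf'/f$ together with $zf'''/f'' = S_3/S_2$.

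Substituting these into $z^2 p'' = z(zp')' - zp'$ and collecting terms should produce
\[
z^2 p''(z) = S_3 + 2S_2 + 2S_1^3 - 2S_1^2 - 3S_1 S_2,
\]
matching the bracket multiplying $\beta_2$ in \eqref{10s corollary21}. Forming $1 + \beta_1 z p' + \beta_2 z^2 p''$ then recovers $S_f(z)$ verbatim, so that $S_f(z) \prec \sqrt{1+z}$ is precisely the hypothesis of Theorem \ref{10s secondorder1} and the admissibility condition $\nu_0(\beta_1 + \beta_2\nu_1)(\nu_0(\beta_1 + \beta_2\nu_1) - 2) \geq 1$ carries over unchanged. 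There is no genuine conceptual obstacle here; the only delicate point is the bookkeeping in the second derivative, where the cubic and cross terms $2S_1^3$ and $-3S_1 S_2$ must be tracked carefully through repeated applications of the product rule, and one must retain the standing requirement that $f$ be nonvanishing on $\mathbb{D}\setminus\{0\}$ so that $p$ is a legitimate analytic input to the theorem.
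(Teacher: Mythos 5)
Your proposal is correct and follows exactly the route the paper intends: the corollary is obtained from Theorem \ref{10s secondorder1} by setting $p(z)=zf'(z)/f(z)$, and your logarithmic-differentiation computations $zp'=S_2-S_1^2+S_1$ and $z^2p''=z(zp')'-zp'=S_3+2S_2+2S_1^3-2S_1^2-3S_1S_2$ check out, reproducing $S_f$ verbatim. The paper states the corollary without writing out this verification, so your proposal simply supplies the bookkeeping the authors leave implicit.
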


\begin{theorem}
Let $\beta_1$, $\beta_2>0$ and $-1< D<C\leq 1$ with $\nu_0(\beta_1+\beta_2 \nu_1)(1-D^2)\geq(C-D)(1+|D|)$. Let $p$ be analytic function in $\mathbb{D}$ with $p(0)=1$ and
\begin{equation*}
        1+\beta_1 zp'(z)+\beta_2 z^2p''(z)\prec \frac{1+Cz}{1+Dz}.
\end{equation*}
Then $p(z)\prec 1+\sin z$.
\end{theorem}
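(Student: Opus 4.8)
The proof runs in close parallel to that of Theorem \ref{10s secondorder1}; the only genuinely new ingredient is the geometry of the target region. The plan is to first identify $\Omega := h(\mathbb{D})$ for $h(z) = (1+Cz)/(1+Dz)$. Since this is a M\"obius map and $|D| < 1$, inverting $w = (1+Cz)/(1+Dz)$ and imposing $|z| < 1$ shows that $\Omega$ is the open disk
$$\Omega = \left\{ w \in \mathbb{C} : \left| w - \frac{1-CD}{1-D^2} \right| < \frac{C-D}{1-D^2} \right\},$$
with center $a_0 = (1-CD)/(1-D^2)$ and radius $R_0 = (C-D)/(1-D^2)$, where $C > D$ and $1 - D^2 > 0$ guarantee $R_0 > 0$. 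As before I would take $\phi(r,s,t;z) = 1 + \beta_1 s + \beta_2 t$, so that the hypothesis reads $\phi(p(z),zp'(z),z^2p''(z);z) \prec h(z)$, i.e.\ that it lies in $\Omega$; by Lemma \ref{10 millertheorem} it then suffices to verify $\phi \in \Psi[\Omega, 1+\sin z]$, which amounts to showing $\phi(r,s,t;z) \notin \Omega$, i.e.\ $|\phi - a_0| \geq R_0$, whenever the admissibility data \eqref{10s 4} hold.

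Second, I would estimate $|\phi - a_0|$. Writing $1 - a_0 = D(C-D)/(1-D^2)$, one has $\phi - a_0 = \beta_1 s + \beta_2 t + D(C-D)/(1-D^2)$, so the reverse triangle inequality gives
$$|\phi - a_0| \geq |\beta_1 s + \beta_2 t| - \frac{|D|(C-D)}{1-D^2}.$$
The term $|\beta_1 s + \beta_2 t|$ is bounded below exactly as in Theorem \ref{10s secondorder1}: factoring out $|\beta_1 s|$, passing to the real part, substituting $|s| = m\,n_1(\theta)$ together with the admissibility bound on $\RE(1 + t/s)$, and then using $m \geq 1$ and the minimality of $n_1$ and $n_2$ at $\theta = 0$ recorded in \eqref{10s nu0} and \eqref{10s nu1}, yields $|\beta_1 s + \beta_2 t| \geq \nu_0(\beta_1 + \beta_2\nu_1)$.

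Combining the two displays gives $|\phi - a_0| \geq \nu_0(\beta_1 + \beta_2\nu_1) - |D|(C-D)/(1-D^2)$, so the required inequality $|\phi - a_0| \geq R_0 = (C-D)/(1-D^2)$ reduces to
$$\nu_0(\beta_1 + \beta_2\nu_1) \geq \frac{(C-D)(1+|D|)}{1-D^2},$$
which, after clearing the positive denominator $1 - D^2$, is precisely the standing hypothesis $\nu_0(\beta_1 + \beta_2\nu_1)(1-D^2) \geq (C-D)(1+|D|)$. Hence $\phi \notin \Omega$, so $\phi \in \Psi[\Omega, 1+\sin z]$, and Lemma \ref{10 millertheorem} delivers $p \prec 1 + \sin z$. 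The main point requiring care is the first step, namely computing the center and radius of the Janowski disk and correctly tracking the shift $1 - a_0$ (including the sign of $D$) through the reverse triangle inequality; once the geometry is pinned down, the lower bound on $|\beta_1 s + \beta_2 t|$ is identical to the one already established in Theorem \ref{10s secondorder1}.
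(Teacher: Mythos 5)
Your proposal is correct and follows essentially the same route as the paper's proof: identify the Janowski image as the disk centered at $(1-CD)/(1-D^2)$ of radius $(C-D)/(1-D^2)$, apply the reverse triangle inequality after tracking the shift $1-(1-CD)/(1-D^2)=D(C-D)/(1-D^2)$, reuse the lower bound $|\beta_1 s+\beta_2 t|\geq \nu_0(\beta_1+\beta_2\nu_1)$ from Theorem \ref{10s secondorder1}, and invoke Lemma \ref{10 millertheorem}. No discrepancies worth noting.
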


\begin{proof}
Let $h(z)=(1+Cz)/(1+Dz)$ for $z\in \mathbb{D}$, then we have $h(\mathbb{D})=\{w\in \mathbb{C}:|w-(1-CD)/(1-D^2)|<(C-D)/(1-D^2)\}=:\Omega$. Let $\phi:\mathbb{C}^3\times \mathbb{D}\rightarrow \mathbb{C}$ be defined as $\phi(r,s,t;z)=1+\beta_1 s+\beta_2 t$. It is clear that $\phi\in \Psi[\Omega,1+\sin z]$ only when $\phi(r,s,t;z)\notin \Omega$ for $z\in \mathbb{D}$. Consider
\begin{align*}
        \bigg|\phi(r,s,t;z)-\frac{1-CD}{1-D^2}\bigg|&=\bigg|1+\beta_1 s+\beta_2 t-\frac{1-CD}{1-D^2}\bigg|\\
        &\geq |\beta_1 s+\beta_2 t|-\frac{|D|(C-D)}{1-D^2}\\
        &\geq |\beta_1 s|\RE\bigg(1+\frac{\beta_2}{\beta_1}\frac{t}{s}\bigg)-\frac{|D|(C-D)}{1-D^2}\\
        &\geq m\beta_1 n_1(\theta)\RE\bigg(1+\frac{\beta_2}{\beta_1}(mn_2(\theta)+m-1)\bigg)-\frac{|D|(C-D)}{1-D^2}.
\end{align*}
Using the fact that $m\geq 1$ and proceeding as in the proof of Theorem \ref{10s secondorder1}, we have
\begin{align*}
   \bigg|\phi(r,s,t;z)-\frac{1-CD}{1-D^2}\bigg|&\geq  %\beta_1 n_1(\theta)\RE\bigg(1+\frac{\beta_2}{\beta_1}n_2(\theta)\bigg)-\frac{|D|(C-D)}{1-D^2}\\
   \nu_0(\beta_1+\beta_2 \nu_1)-\frac{|D|(C-D)}{1-D^2}\\
   &\geq \frac{C-D}{1-D^2}.
\end{align*}
Thus, $\phi(r,s,t;z)\notin \Omega$ and hence $\phi\in \Psi[\Omega,1+\sin z]$. Now, the result follows by an application of Lemma \ref{10 millertheorem}.
\end{proof}

\begin{theorem}\label{10s thmsigmoid}
Suppose $\beta_1$, $\beta_2>0$ and $\nu_0(\beta_1+\beta_2\nu_1)\geq r_0$, where $r_0\approx 0.546302$ is the positive root of the equation $r^2+2 \cot (1)r-1=0$. Let $p$ be analytic function in $\mathbb{D}$ with $p(0)=1$ and
\begin{equation*}
1+\beta_1 zp'(z)+\beta_2 z^2p''(z)\prec \frac{2}{1+e^{-z}}.        
\end{equation*}
Then $p(z)\prec 1+\sin z$.
\end{theorem}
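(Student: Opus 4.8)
The plan is to follow the same admissibility-condition template already established in the proofs of Theorem~\ref{10s secondorder1} and the Janowski case, adapting it to the sigmoid target $h(z)=2/(1+e^{-z})$. First I would identify the image domain $\Omega=h(\mathbb{D})$ and find a convenient analytic characterization of it. Since $h(z)=2/(1+e^{-z})$ is the sigmoid map, the inverse relation gives $e^{-z}=(2-w)/w=2/w-1$, so that $z=-\log(2/w-1)=\log\bigl(w/(2-w)\bigr)$; equivalently, writing the boundary condition in logarithmic form, the relevant inequality is $\bigl|\log\bigl((1+\zeta)/(1-\zeta)\bigr)\bigr|$-type, which is exactly the shape that Lemma~\ref{prilemma42} controls. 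This is why the hypothesis involves $r_0$, the positive root of $r^2+2\cot(1)r-1=0$ from Lemma~\ref{prilemma42}: the condition $w\notin\Omega$ will be recast as a statement that a certain logarithm has modulus at least $1$, and Lemma~\ref{prilemma42} converts that modulus bound into a radial bound $R\geq r_0$.

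Next I would set $\phi(r,s,t;z)=1+\beta_1 s+\beta_2 t$ exactly as before, and show $\phi\in\Psi[\Omega,1+\sin z]$ by verifying $\phi(r,s,t;z)\notin\Omega$ under the admissibility substitutions $s=m\zeta q'(\zeta)$, $\RE(1+t/s)\geq m(1+n_2(\theta))$. The key computation is to lower-bound $|\beta_1 s+\beta_2 t|$ in precisely the manner used in Theorem~\ref{10s secondorder1}: write
\begin{equation*}
|\beta_1 s+\beta_2 t|\geq |\beta_1 s|\,\RE\Bigl(1+\tfrac{\beta_2}{\beta_1}\tfrac{t}{s}\Bigr)\geq m\beta_1 n_1(\theta)\,\RE\Bigl(1+\tfrac{\beta_2}{\beta_1}(mn_2(\theta)+m-1)\Bigr),
\end{equation*}
and then invoke $m\geq 1$ together with the minimizing values $n_1(\theta)\geq\nu_0$ and $n_2(\theta)\geq\nu_1$ to reduce the bound to $\nu_0(\beta_1+\beta_2\nu_1)$. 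The hypothesis $\nu_0(\beta_1+\beta_2\nu_1)\geq r_0$ then guarantees that the point $\phi(r,s,t;z)=1+\beta_1 s+\beta_2 t$ lies far enough from $1$ (the center $h(0)=1$) that it escapes $\Omega$.

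The main obstacle, and the step requiring the most care, is the translation of ``$w\notin\Omega$'' into the logarithmic modulus inequality of Lemma~\ref{prilemma42}. For the disk and lemniscate targets the boundary was a simple algebraic curve, so the exclusion reduced to a single real inequality; here the sigmoid domain's boundary is transcendental, and I must express membership in $\Omega$ through the map $z\mapsto\log(w/(2-w))$ (or its symmetric reformulation) so that the condition $\phi\notin\Omega$ becomes $\bigl|\log(\cdots)\bigr|\geq 1$ with the argument of the logarithm having modulus equal to the lower bound $\nu_0(\beta_1+\beta_2\nu_1)$ just derived. Once that identification is made correctly, Lemma~\ref{prilemma42} applies verbatim: $\bigl|\log((1+z)/(1-z))\bigr|\geq 1$ holds on $|z|=R$ iff $R\geq r_0$, so the derived modulus bound $\nu_0(\beta_1+\beta_2\nu_1)\geq r_0$ forces $\phi(r,s,t;z)\notin\Omega$.

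Having shown $\phi\in\Psi[\Omega,1+\sin z]$, I would conclude exactly as in the earlier theorems: by Lemma~\ref{10 millertheorem}, the hypothesis $1+\beta_1 zp'(z)+\beta_2 z^2p''(z)\prec h(z)$, i.e.\ $\phi(p(z),zp'(z),z^2p''(z);z)\in\Omega$, yields $p(z)\prec 1+\sin z$, which completes the proof.
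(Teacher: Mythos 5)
Your proposal is correct and follows essentially the same route as the paper: the same operator $\phi(r,s,t;z)=1+\beta_1 s+\beta_2 t$, the same lower bound $|\beta_1 s+\beta_2 t|\geq \nu_0(\beta_1+\beta_2\nu_1)\geq r_0$ via the admissibility substitutions and $m\geq 1$, and the same observation that $\phi/(2-\phi)=(1+X)/(1-X)$ with $X=\beta_1 s+\beta_2 t$, so Lemma~\ref{prilemma42} gives $|\log(\phi/(2-\phi))|\geq 1$ and hence $\phi\notin\Omega$, after which Lemma~\ref{10 millertheorem} concludes. The ``main obstacle'' you flag is resolved exactly as you anticipate, so no gap remains.
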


\begin{proof}
Assuming $h(z)=2/(1+e^{-z})$ for $z\in \mathbb{D}$. Then, $h(\mathbb{D})=\{w \in \mathbb{C}:|\log (w /(2-w ))|<1\}=:\Omega$. Let $\phi:\mathbb{C}^3\times \mathbb{D}\rightarrow \mathbb{C}$ be defined as $\phi(r,s,t;z)=1+\beta_1 s+\beta_2 t$. We know that $\phi\in \Psi[\Omega, 1+\sin z]$ provided $\phi(r,s,t;z)\notin \Omega$. First we consider
\begin{align*}
    |\beta_1 s+\beta_2 t|&=\beta_1 |s|\bigg|1+\frac{\beta_2}{\beta_1}\frac{t}{s}\bigg|\\
    &\geq \beta_1 |s|\RE\bigg(1+\frac{\beta_2}{\beta_1}\frac{t}{s}\bigg)\\
    &\geq m \beta_1 n_1(\theta)\bigg(1+\frac{\beta_2}{\beta_1}(mn_2(\theta)+m-1)\bigg).
\end{align*}
As $m\geq 1$, we obtain
\begin{align}\label{10s 7}
    |\beta_1 s+\beta_2 t|&\geq n_1(\theta)(\beta_1+\beta_2 n_2(\theta))\nonumber\\
    &\geq \nu_0(\beta_1+\beta_2 \nu_1)\nonumber\\
    &\geq r_0.
\end{align}
Now, we consider
\begin{equation*}
    \bigg|\log \bigg(\frac{\phi(r,s,t;z)}{2-\phi(r,s,t;z)}\bigg)\bigg|=\bigg|\log\bigg(\frac{1+\beta_1 s+\beta_2 t}{1-(\beta_1 s+\beta_2 t)}\bigg)\bigg|.
\end{equation*}
Through Lemma \ref{prilemma42} and \eqref{10s 7}, we have
\begin{equation*}
    \bigg|\log \bigg(\frac{1+\beta_1 s+\beta_2 t}{1-(\beta_1 s+\beta_2 t)}\bigg)\bigg|\geq 1,
\end{equation*}
which implies that $\phi\in \Psi[\Omega,1+\sin z]$. Now, the result follows by an application of Lemma \ref{10 millertheorem}.
\end{proof}

\begin{theorem}
Suppose $\beta_1$, $\beta_2>0$ and $\nu_0(\beta_1+\beta_2\nu_1)\geq \sqrt{2}$. Let $p$ be analytic function in $\mathbb{D}$ with $p(0)=1$ and
\begin{equation*}
        1+\beta_1 zp'(z)+\beta_2 z^2p''(z)\prec z+\sqrt{1+z^2}.
\end{equation*}
Then $p(z)\prec 1+\sin z$.
\end{theorem}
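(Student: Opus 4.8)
The plan is to invoke the second–order admissibility criterion of Lemma \ref{10 millertheorem} with $q(z)=1+\sin z$ and $\Omega=h(\mathbb{D})$, where $h(z)=z+\sqrt{1+z^2}$. First I would identify the target domain. Writing $w=h(z)$ and inverting, $(w-z)^2=1+z^2$ gives $z=(w-w^{-1})/2$, so that $|z|<1$ becomes $|w-w^{-1}|<2$, i.e. $|w^2-1|<2|w|$; since $h(0)=1$ lies in the right half–plane, $\Omega=h(\mathbb{D})=\{w\in\mathbb{C}:|w^2-1|<2|w|,\ \RE w>0\}$ is the right–hand lune. Exactly as in the preceding theorems I would set $\phi(r,s,t;z)=1+\beta_1 s+\beta_2 t$ and reduce the whole statement to the single admissibility requirement $\phi(r,s,t;z)\notin\Omega$ at the boundary data $r,s,t$ of \eqref{10s 4}; granting this, Lemma \ref{10 millertheorem} delivers $p\prec 1+\sin z$ at once.

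The decisive step is a geometric containment: I claim $\Omega$ lies inside the disc centred at $1$ of radius $\sqrt2$, i.e. $w\in\Omega\Rightarrow|w-1|<\sqrt2$. Writing $w=a+ib$ with $a=\RE w$, a short computation gives
\[
|w^2-1|^2-4|w|^2=(a^2+b^2)^2-2(a^2-b^2)+1-4(a^2+b^2)=(|w|^2-1)^2-(2a)^2,
\]
so the defining inequality $|w^2-1|<2|w|$ is equivalent to $(|w|^2-1)^2<(2a)^2$. Since $a>0$ throughout $\Omega$, this forces $|w|^2-1<2a$, which rearranges to $(a-1)^2+b^2<2$, that is $|w-1|<\sqrt2$. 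This containment is the crux of the argument, and it is precisely what pins down the constant $\sqrt2$ appearing in the hypothesis.

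With this in hand the remainder is routine. Exactly as in the derivation of \eqref{10s 7} in the proof of Theorem \ref{10s thmsigmoid}, the admissibility data, together with $|s|=m\,n_1(\theta)$, the bound $\RE(1+t/s)\geq m(1+n_2(\theta))$, and $m\geq1$, yield
\[
|\beta_1 s+\beta_2 t|\ \geq\ n_1(\theta)\big(\beta_1+\beta_2 n_2(\theta)\big)\ \geq\ \nu_0(\beta_1+\beta_2\nu_1),
\]
where $n_1,n_2,\nu_0,\nu_1$ are as in \eqref{10s 1}, \eqref{10s 3}, \eqref{10s nu0} and \eqref{10s nu1}. By hypothesis $\nu_0(\beta_1+\beta_2\nu_1)\geq\sqrt2$, and since $\phi(r,s,t;z)-1=\beta_1 s+\beta_2 t$, we obtain $|\phi(r,s,t;z)-1|\geq\sqrt2$. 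The containment of the previous paragraph then forces $\phi(r,s,t;z)\notin\Omega$, so $\phi\in\Psi[\Omega,1+\sin z]$, and Lemma \ref{10 millertheorem} finishes the proof.

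I expect the genuine obstacle to be the containment $\Omega\subset\{|w-1|<\sqrt2\}$ rather than the estimate on $|\beta_1 s+\beta_2 t|$. In particular, one cannot proceed by the naive bound $|\phi^2-1|\geq 2|\phi|$ that works for the domain $\sqrt{1+z}$ in Theorem \ref{10s secondorder1}: that inequality fails outright at points such as $w=-1$, where $|w-1|=2\geq\sqrt2$ yet $|w^2-1|=0<2=2|w|$. Such points lie in the left–hand lune $\{|w^2-1|<2|w|,\ \RE w<0\}$, which is disjoint from $\Omega$, so it is exactly the restriction $\RE w>0$ that must be used, and the disc–containment is the clean way to package it.
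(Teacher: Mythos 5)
Your proof is correct and follows essentially the same route as the paper's: both reduce the problem to the estimate $|\phi(r,s,t;z)-1|=|\beta_1 s+\beta_2 t|\geq \nu_0(\beta_1+\beta_2\nu_1)\geq\sqrt{2}$ together with the containment of $h(\mathbb{D})$ in the disk $\{w\in\mathbb{C}:|w-1|<\sqrt{2}\}$. The only difference is that you actually prove this containment algebraically (via the identity $|w^2-1|^2-4|w|^2=(|w|^2-1)^2-4(\RE w)^2$ and the restriction $\RE w>0$, which you rightly note is essential since the full set $\{w:|w^2-1|<2|w|\}$ also contains a left-hand lune where the containment fails), whereas the paper merely reads it off from the geometry of the circles $C_1$ and $C_2$ in its figure.
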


\begin{proof}
Let $h(z)=z+\sqrt{1+z^2}$ for $z\in \mathbb{D}$. Then, $h(\mathbb{D})=\{w \in \mathbb{C}:|w ^2-1|<2|w |\}=:\Omega$. Let $\phi:\mathbb{C}^3\times \mathbb{D}\rightarrow \mathbb{C}$ be defined as $\phi(r,s,t;z)=1+\beta_1 s+\beta_2 t$. For $\phi\in \Psi[\Omega, 1+\sin z]$, we must have $\phi(r,s,t;z)\notin \Omega$. From the geometry of $z+\sqrt{1+z^2}$ (see Fig. \ref{crescent10}), we note that $\Omega$ is constructed by the circles
\begin{equation*}
        C_1:|z-1|=\sqrt{2}\quad\text{and}\quad C_2:|z+1|=\sqrt{2}.
\end{equation*}

\begin{figure}[htp]
   \centering
   \includegraphics[width=0.7\textwidth]{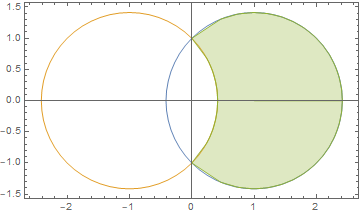}
    \caption{Graph of two circles, namely $C_1$ (blue boundary) and $C_2$ (orange boundary). While the shaded region represents $z+\sqrt{1+z^2}$.}
   \label{crescent10}
\end{figure}
It is obvious that $\Omega$ contains the disk enclosed by $C_1$ and excludes the portion of the disk enclosed by $C_1\cap C_2$. We have

\begin{equation*}
        |\phi(r,s,t;z)-1|=|\beta_1 s+\beta_2 t|.
\end{equation*}
Now, proceeding in the similar way as in the proof of Theorem \ref{10s thmsigmoid}, we have
\begin{align*}
       |\beta_1 s+\beta_2 t|&\geq n_1(\theta)(\beta_1+\beta_2 n_2(\theta))\\
       &\geq \nu_0(\beta_1+\beta_2\nu_1)\\
       &\geq \sqrt{2}.
\end{align*}
The fact that $\phi(r,s,t;z)$ lies outside the circle $C_1$ suffices us to deduce that $\phi(r,s,t;z)\notin \Omega$ and hence $\phi\in \Psi[\Omega,1+\sin z]$. Now, the result follows by an application of Lemma \ref{10 millertheorem}.
\end{proof}

\begin{theorem}
Let $\beta_1$, $\beta_2>0$ and $\nu_0(\beta_1+\beta_2\nu_1)\geq e$. Let $p$ be analytic function in $\mathbb{D}$ with $p(0)=1$ and
\begin{equation*}
        1+\beta_1 zp'(z)+\beta_2 z^2p''(z)\prec 1+ze^z.
\end{equation*}
Then $p(z)\prec 1+\sin z$.
\end{theorem}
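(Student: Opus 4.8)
The plan is to reuse verbatim the admissibility framework that drives the previous theorems in this section. I set $h(z)=1+ze^{z}$ and $\Omega=h(\mathbb{D})$, and take the same operator $\phi:\mathbb{C}^{3}\times\mathbb{D}\to\mathbb{C}$, $\phi(r,s,t;z)=1+\beta_{1}s+\beta_{2}t$. By Lemma \ref{10 millertheorem} it is enough to prove that $\phi\in\Psi[\Omega,1+\sin z]$, and in view of the admissibility description recorded in \eqref{10s 4} this amounts to checking that $\phi(r,s,t;z)\notin\Omega$ for every $z\in\mathbb{D}$, every $\theta\in[0,2\pi]$ and every $m\ge 1$ subject to $r=1+\sin(e^{i\theta})$, $s=me^{i\theta}\cos(e^{i\theta})$ and $\RE(1+t/s)\ge m(1+n_{2}(\theta))$.

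The new ingredient, replacing the logarithmic lemmas used for the Janowski and sigmoid cases, is a description of $\Omega$ relative to its centre $1=h(0)$. Since $h(z)-1=ze^{z}$, I would bound $|h(z)-1|=|z|\,e^{\RE z}$ on $\mathbb{D}$; using $\RE z\le|z|<1$ and the monotonicity of $r\mapsto re^{r}$ on $[0,1)$ gives $|h(z)-1|\le|z|e^{|z|}<e$. Hence $\Omega\subseteq\{w:|w-1|<e\}$, and a look at the boundary values $|h(e^{i\theta})-1|=e^{\cos\theta}$ shows that the radius $e$ is sharp, the circle $|w-1|=e$ meeting $\overline{\Omega}$ only at the limiting point $w=1+e\notin\Omega$. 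Consequently any $w$ with $|w-1|\ge e$ lies outside the open set $\Omega$, so it suffices to force $|\phi(r,s,t;z)-1|\ge e$.

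The remaining estimate is identical in spirit to the one in the proof for $h(z)=z+\sqrt{1+z^{2}}$. Writing $|\phi(r,s,t;z)-1|=|\beta_{1}s+\beta_{2}t|\ge\beta_{1}|s|\,\RE\!\big(1+(\beta_{2}/\beta_{1})(t/s)\big)$ and inserting $|s|=m\,n_{1}(\theta)$ together with the admissibility bound on $\RE(t/s)$, I reduce, via $m\ge 1$ and the minima $n_{1}(\theta)\ge\nu_{0}$, $n_{2}(\theta)\ge\nu_{1}$ from \eqref{10s nu0} and \eqref{10s nu1}, to
\begin{equation*}
|\beta_{1}s+\beta_{2}t|\ge n_{1}(\theta)\big(\beta_{1}+\beta_{2}n_{2}(\theta)\big)\ge\nu_{0}(\beta_{1}+\beta_{2}\nu_{1})\ge e.
\end{equation*}
Thus $|\phi(r,s,t;z)-1|\ge e$, giving $\phi(r,s,t;z)\notin\Omega$, hence $\phi\in\Psi[\Omega,1+\sin z]$, and the conclusion $p\prec1+\sin z$ follows from Lemma \ref{10 millertheorem}.

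I expect the genuinely substantive step to be the geometric one: pinning down that $e$ is exactly the largest distance from $1$ to $\partial\Omega$, which is what makes the hypothesis $\nu_{0}(\beta_{1}+\beta_{2}\nu_{1})\ge e$ the natural threshold and removes any need for an auxiliary transcendental lemma. The one place demanding care in the modulus estimate is the passage $m\ge 1$: the quantity being minimised must be checked to be smallest at $m=1$ and $\theta=0$, where $n_{1}$ and $n_{2}$ simultaneously attain their minima $\nu_{0}$ and $\nu_{1}$, so that the clean bound $\nu_{0}(\beta_{1}+\beta_{2}\nu_{1})$ is indeed the binding lower bound.
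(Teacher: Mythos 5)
Your proposal is correct and follows essentially the same route as the paper: the same operator $\phi(r,s,t;z)=1+\beta_1 s+\beta_2 t$, the same lower bound $|\beta_1 s+\beta_2 t|\geq \nu_0(\beta_1+\beta_2\nu_1)\geq e$, and the same conclusion via Lemma \ref{10 millertheorem}. The only difference is cosmetic: where the paper cites \cite[Lemma 3.3]{kumar-ganganiaCardioid-2021} for the containment $\Omega\subseteq\{w:|w-1|<e\}$, you derive it directly from $|ze^{z}|=|z|e^{\RE z}<e$, which makes the argument self-contained but does not change its substance.
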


\begin{proof}
Assuming $h(z)=1+ze^z$ for $z\in \mathbb{D}$. Then, $\Omega:=h(\mathbb{D})$. Let $\phi:\mathbb{C}^3\times \mathbb{D}\rightarrow \mathbb{C}$ be defined as $\phi(r,s,t;z)=1+\beta_1 s+\beta_2 t$. For $\phi\in \Psi[\Omega, 1+\sin z]$, we need to have $\phi(r,s,t;z)\notin \Omega$. Through \cite[Lemma 3.3]{kumar-ganganiaCardioid-2021},
we observe that the smallest disk containing $\Omega$ is $\{\delta \in \mathbb{C}: |\delta -1|<e\}$. Thus
\begin{equation*}
        |\phi(r,s,t;z)-1|=|\beta_1 s+\beta_2 t|.
\end{equation*}
Analogous to Theorem \ref{10s thmsigmoid}, we get
\begin{align*}
       |\beta_1 s+\beta_2 t|&\geq n_1(\theta)(\beta_1+\beta_2 n_2(\theta))\\
       &\geq \nu_0(\beta_1+\beta_2\nu_1)\\
       &\geq e.
\end{align*}
Clearly, $\phi(r,s,t;z)$ lies outside the disk $\{\delta \in \mathbb{C}: |\delta -1|<e\}$ which suffices us to conclude that $\phi(r,s,t;z)\notin \Omega$. Therefore, $\phi\in \Psi[\Omega,1+\sin z]$ and the result follows as an application of Lemma \ref{10 millertheorem}.
\end{proof}

\begin{theorem}
Suppose $\beta_1$, $\beta_2>0$ and $2\nu_0(\beta_1+\beta_2\nu_1)\geq \pi$. Let $p$ be analytic function in $\mathbb{D}$ with $p(0)=1$ and
\begin{equation*}
        1+\beta_1 zp'(z)+\beta_2 z^2p''(z)\prec 1+\sinh^{-1} z.
\end{equation*}
Then $p(z)\prec 1+\sin z$.
\end{theorem}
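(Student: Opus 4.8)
The plan is to apply the same admissibility machinery used in the preceding theorems. First I would set $h(z)=1+\sinh^{-1}z$ and identify the target domain $\Omega:=h(\mathbb{D})=\Omega_{\rho}=\{w\in\mathbb{C}:|\sinh(w-1)|<1\}$, the petal-shaped region introduced in Section~1, and define $\phi:\mathbb{C}^3\times\mathbb{D}\to\mathbb{C}$ by $\phi(r,s,t;z)=1+\beta_1 s+\beta_2 t$. As in every prior case, it suffices to show that $\phi(r,s,t;z)\notin\Omega_{\rho}$ whenever $(r,s,t)$ satisfy the admissibility data in \eqref{10s 4}, for then $\phi\in\Psi[\Omega,1+\sin z]$ and Lemma~\ref{10 millertheorem} delivers $p\prec 1+\sin z$.

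The decisive geometric input is the radius of the smallest disk centered at $1$ that contains $\Omega_{\rho}$. Writing $w-1=u+iv$, the boundary $|\sinh(w-1)|=1$ becomes $\sinh^2 u+\sin^2 v=1$, equivalently $\sinh u=\pm\cos v$ with $v\in[-\pi/2,\pi/2]$. Maximizing $|w-1|^2=u^2+v^2$ along this curve shows that the points of $\partial\Omega_{\rho}$ farthest from $1$ are $w=1\pm i\pi/2$, so that $\Omega_{\rho}\subset\{w\in\mathbb{C}:|w-1|<\pi/2\}$. Consequently, to conclude $\phi\notin\Omega_{\rho}$ it is enough to force $|\phi(r,s,t;z)-1|\geq\pi/2$.

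This final estimate proceeds exactly as in the proof of Theorem~\ref{10s thmsigmoid}. Since $|\phi(r,s,t;z)-1|=|\beta_1 s+\beta_2 t|\geq \beta_1|s|\RE(1+(\beta_2/\beta_1)(t/s))$, substituting $s=m e^{i\theta}\cos(e^{i\theta})$ and $\RE(1+t/s)\geq m(1+n_2(\theta))$ and using $m\geq 1$, I obtain
\begin{equation*}
|\phi(r,s,t;z)-1|\geq n_1(\theta)\bigl(\beta_1+\beta_2 n_2(\theta)\bigr)\geq \nu_0(\beta_1+\beta_2\nu_1),
\end{equation*}
where $n_1,n_2$ and their minima $\nu_0,\nu_1$ are those of \eqref{10s 1}, \eqref{10s 3}, \eqref{10s nu0} and \eqref{10s nu1}. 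Here the hypothesis $2\nu_0(\beta_1+\beta_2\nu_1)\geq\pi$ forces $\beta_1+\beta_2\nu_1>0$, which is exactly what legitimizes combining $n_1(\theta)\geq\nu_0$ with $\beta_1+\beta_2 n_2(\theta)\geq\beta_1+\beta_2\nu_1>0$ in the correct direction. The same hypothesis then yields $|\phi(r,s,t;z)-1|\geq\pi/2$, so $\phi$ lies outside $\{|w-1|<\pi/2\}\supseteq\Omega_{\rho}$, hence $\phi\notin\Omega_{\rho}$ and the result follows from Lemma~\ref{10 millertheorem}.

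I expect the main obstacle to be the geometric claim that $\pi/2$ is the sharp radius, that is, verifying that the maximum of $u^2+v^2$ subject to $\sinh^2 u+\sin^2 v=1$ is attained at $(u,v)=(0,\pm\pi/2)$ and equals $(\pi/2)^2$. Once this petal estimate is in place, the remainder is a verbatim repetition of the earlier admissibility computation.
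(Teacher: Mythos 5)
Your proposal is correct and follows essentially the same route as the paper: bound $|\phi(r,s,t;z)-1|=|\beta_1 s+\beta_2 t|$ from below by $\nu_0(\beta_1+\beta_2\nu_1)\geq \pi/2$, use the containment $\Omega_\rho\subset\{w:|w-1|<\pi/2\}$ to conclude $\phi\notin\Omega_\rho$, and invoke Lemma \ref{10 millertheorem}. The only difference is that you verify the smallest-disk claim directly from the boundary equation $\sinh^2 u+\sin^2 v=1$, whereas the paper simply cites Remark 2.7 of \cite{kush}.
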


\begin{proof}
Let $h(z)=1+\sinh^{-1}z$ for $z\in \mathbb{D}$. Then, $h(\mathbb{D})=\{w \in\mathbb{C}:|\sinh (w -1)|<1\}=:\Omega_\rho$. Let $\phi:\mathbb{C}^3\times \mathbb{D}\rightarrow \mathbb{C}$ be defined as $\phi(r,s,t;z)=1+\beta_1 s+\beta_2 t$. For $\phi\in \Psi[\Omega_\rho, 1+\sin z]$, we must have $\phi(r,s,t;z)\notin \Omega_\rho$. Through \cite[Remark 2.7]{kush}, we note that the disk $\{\delta \in\mathbb{C}: |\delta -1|<\pi/2\}$ is the smallest disk containing $\Omega_\rho$. So,
\begin{equation*}
        |\phi(r,s,t;z)-1|=|\beta_1 s+\beta_2 t|.
\end{equation*}
Analogous to Theorem \ref{10s thmsigmoid}, we have
\begin{align*}
       |\beta_1 s+\beta_2 t|&\geq n_1(\theta)(\beta_1+\beta_2 n_2(\theta))\\
       &\geq \nu_0(\beta_1+\beta_2\nu_1)\\
       &\geq \frac{\pi}{2}.
\end{align*}
Clearly, $\phi(r,s,t;z)$ does not belong to the disk $\{\delta \in \mathbb{C}: |\delta -1|<\pi/2\}$ which suffices us to conclude that $\phi(r,s,t;z)\notin \Omega_\rho$. Therefore, $\phi\in \Psi[\Omega_\rho,1+\sin z]$ and thus the result follows as an application of Lemma \ref{10 millertheorem}.
\end{proof}

%Finally, we select $h(z)$ to be $e^z$ and the function $1+\sin z$ itself in the forthcoming outcomes, as demonstrated in the proofs below:

\begin{theorem}
Let $\beta_1$, $\beta_2>0$ and $\nu_0(\beta_1+\beta_2\nu_1)\geq e-1$. Let $p$ be analytic function in $\mathbb{D}$ with $p(0)=1$ and
\begin{equation*}
        1+\beta_1 zp'(z)+\beta_2 z^2p''(z)\prec e^z.
\end{equation*}
Then $p(z)\prec 1+\sin z$.
\end{theorem}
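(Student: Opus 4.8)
The plan is to follow the template established by the preceding theorems, identifying the image domain of $e^z$ and then reducing the admissibility verification to Lemma \ref{prilemma4}. First I would set $h(z)=e^z$ and observe that, since $w=e^z$ is equivalent to $z=\log w$, the image domain is $\Omega:=h(\mathbb{D})=\{w\in\mathbb{C}:|\log w|<1\}$. As before, I define $\phi:\mathbb{C}^3\times\mathbb{D}\to\mathbb{C}$ by $\phi(r,s,t;z)=1+\beta_1 s+\beta_2 t$, so that the requirement $\phi\in\Psi[\Omega,1+\sin z]$ reduces to verifying $\phi(r,s,t;z)\notin\Omega$ under the admissibility data \eqref{10s 4}.

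Next I would estimate $|\beta_1 s+\beta_2 t|$ exactly as in the proof of Theorem \ref{10s thmsigmoid}: using $|1+(\beta_2/\beta_1)(t/s)|\geq\RE(1+(\beta_2/\beta_1)(t/s))$ together with $s=m\zeta q'(\zeta)$ and $\RE(1+t/s)\geq m(1+n_2(\theta))$, and finally the facts $m\geq 1$ and the definitions of $\nu_0,\nu_1$ in \eqref{10s nu0} and \eqref{10s nu1}, to obtain
\begin{equation*}
|\beta_1 s+\beta_2 t|\geq n_1(\theta)(\beta_1+\beta_2 n_2(\theta))\geq\nu_0(\beta_1+\beta_2\nu_1)\geq e-1.
\end{equation*}

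The crucial step, which differs from the earlier ``smallest enclosing disk'' arguments, is to convert this modulus bound into the condition $|\log w|\geq 1$ that characterizes the complement of $\Omega$. Writing $\phi(r,s,t;z)=1+(\beta_1 s+\beta_2 t)$ and applying Lemma \ref{prilemma4} with the complex number there taken to be $\beta_1 s+\beta_2 t$, the inequality $|\beta_1 s+\beta_2 t|\geq e-1$ yields $|\log(1+(\beta_1 s+\beta_2 t))|\geq 1$, that is, $|\log\phi(r,s,t;z)|\geq 1$. This is precisely the statement $\phi(r,s,t;z)\notin\Omega$, hence $\phi\in\Psi[\Omega,1+\sin z]$, and the conclusion $p(z)\prec 1+\sin z$ follows at once from Lemma \ref{10 millertheorem}.

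I expect the only point requiring genuine care to be the correct identification of $\Omega=\{w:|\log w|<1\}$ and the matching of Lemma \ref{prilemma4} to this domain via the substitution $\phi-1=\beta_1 s+\beta_2 t$; the modulus estimate itself is entirely routine and identical to the one already carried out for the sigmoid case, so no new obstacle arises there.
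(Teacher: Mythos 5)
Your proposal is correct and follows essentially the same route as the paper's own proof: the same identification of $\Omega=\{w:|\log w|<1\}$, the same modulus estimate $|\beta_1 s+\beta_2 t|\geq \nu_0(\beta_1+\beta_2\nu_1)\geq e-1$ carried over from the sigmoid case, and the same application of Lemma \ref{prilemma4} followed by Lemma \ref{10 millertheorem}. No discrepancies to note.
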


\begin{proof}
Suppose $h(z)=e^z$ for $z\in \mathbb{D}$. Then, $h(\mathbb{D})=\{w \in\mathbb{C}:|\log w |<1\}=:\Omega$. Let $\phi:\mathbb{C}^3\times \mathbb{D}\rightarrow \mathbb{C}$ be defined as $\phi(r,s,t;z)=1+\beta_1 s+\beta_2 t$. For $\phi\in \Psi[\Omega, 1+\sin z]$, we must have $\phi(r,s,t;z)\notin \Omega$. So,
\begin{equation*}
        |\phi(r,s,t;z)-1|=|\beta_1 s+\beta_2 t|.
\end{equation*}
Proceeding similar to the proof of Theorem \ref{10s thmsigmoid}, we have
\begin{align}\label{10s 82}
       |\beta_1 s+\beta_2 t|&\geq n_1(\theta)(\beta_1+\beta_2 n_2(\theta))\nonumber\\
       &\geq \nu_0(\beta_1+\beta_2\nu_1)\nonumber\\
       &\geq e-1.
\end{align}
Further, we have
\begin{equation*}
     |\log (\phi(r,s,t;z)|=|\log (1+\beta_1 s+\beta_2 t)|.
\end{equation*}
Through Lemma \ref{prilemma4} and \eqref{10s 82}, we have
\begin{equation*}
   |\log (1+\beta_1 s+\beta_2 t)|\geq 1, 
\end{equation*}
which implies that $\phi(r,s,t;z)\notin \Omega$. Therefore, $\phi\in \Psi[\Omega,1+\sin z]$ and thus the result follows as an application of Lemma \ref{10 millertheorem}.
\end{proof}

\begin{theorem}
Suppose $\beta_1$, $\beta_2>0$ and $\nu_0(\beta_1+\beta_2\nu_1)\geq \sinh 1$. Let $p$ be analytic function in $\mathbb{D}$ with $p(0)=1$ and
\begin{equation*}
        1+\beta_1 zp'(z)+\beta_2 z^2p''(z)\prec 1+\sin z.
\end{equation*}
Then $p(z)\prec 1+\sin z$.
\end{theorem}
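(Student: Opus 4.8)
The plan is to reuse the ``smallest containing disk'' strategy from Theorem~\ref{10s thmsigmoid} and the subsequent theorems, since here the target set is again the eight-shaped domain: with $h(z)=1+\sin z$ we have $\Omega=h(\mathbb{D})=\Omega_s=\{w\in\mathbb{C}:|\arcsin(w-1)|<1\}$. Taking $\phi(r,s,t;z)=1+\beta_1 s+\beta_2 t$, it suffices to verify $\phi\in\Psi[\Omega_s,1+\sin z]$, which amounts to showing $\phi(r,s,t;z)\notin\Omega_s$ for all admissible triples $(r,s,t)$.

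The decisive geometric step is to identify the smallest disk centered at $1$ that contains $\Omega_s$. Every $w\in\Omega_s$ is of the form $w=1+\sin z$ with $z=x+iy\in\overline{\mathbb{D}}$, and since $|\sin z|^2=\sin^2 x+\sinh^2 y$, the maximum modulus principle reduces $\max_{\overline{\mathbb{D}}}|\sin z|$ to the boundary, where $|\sin(e^{i\theta})|^2=\sin^2(\cos\theta)+\sinh^2(\sin\theta)$. A short extremal analysis shows this quantity is maximized at $\theta=\pi/2$, i.e. at $z=\pm i$, yielding $\max_{\overline{\mathbb{D}}}|\sin z|=\sinh 1$. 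Consequently $\Omega_s\subset\{w\in\mathbb{C}:|w-1|<\sinh 1\}$, and the radius $\sinh 1$ is sharp because $1+i\sinh 1\in\partial\Omega_s$.

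With the containing disk pinned down, the admissibility estimate proceeds exactly as in Theorem~\ref{10s thmsigmoid}. Writing $|\phi(r,s,t;z)-1|=|\beta_1 s+\beta_2 t|$ and using $|s|=m\,n_1(\theta)$, the admissibility inequality $\RE(1+t/s)\geq m(1+n_2(\theta))$, and $m\geq 1$, I obtain
\begin{equation*}
|\phi(r,s,t;z)-1|\geq n_1(\theta)\bigl(\beta_1+\beta_2 n_2(\theta)\bigr)\geq \nu_0(\beta_1+\beta_2\nu_1),
\end{equation*}
after minimizing $n_1$ and $n_2$ via \eqref{10s nu0} and \eqref{10s nu1}. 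The hypothesis $\nu_0(\beta_1+\beta_2\nu_1)\geq\sinh 1$ then forces $|\phi(r,s,t;z)-1|\geq\sinh 1$, so $\phi(r,s,t;z)$ lies outside the open disk $\{|w-1|<\sinh 1\}\supseteq\Omega_s$ and hence outside $\Omega_s$. An application of Lemma~\ref{10 millertheorem} then delivers $p(z)\prec 1+\sin z$.

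I expect the main obstacle to be the geometric step: confirming rigorously that $\max_{|z|=1}|\sin z|=\sinh 1$ with the extremum located at $z=\pm i$ (and checking that $\theta=\pi/2$ gives the global maximum of $\sin^2(\cos\theta)+\sinh^2(\sin\theta)$ rather than merely a critical point). Once $\sinh 1$ is identified as the correct radius, the remaining estimates are routine and structurally identical to the earlier theorems, the only genuinely new ingredient being the determination of the disk that contains the eight-shaped domain $\Omega_s$ itself.
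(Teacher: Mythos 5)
Your proof is correct and follows essentially the same route as the paper: write $|\phi(r,s,t;z)-1|=|\beta_1 s+\beta_2 t|$, bound it below by $\nu_0(\beta_1+\beta_2\nu_1)\geq\sinh 1$ using the admissibility conditions and $m\geq 1$, and conclude that $\phi$ avoids the smallest disk $\{w:|w-1|<\sinh 1\}$ containing $\Omega_s$, so that Lemma~\ref{10 millertheorem} applies. The only difference is that the paper simply cites \cite[Lemma 3.3]{chosine} for the fact that $\sinh 1$ is the radius of that disk, whereas you sketch its derivation via $\max_{|z|=1}|\sin z|=\sinh 1$ at $z=\pm i$.
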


\begin{proof}
Suppose $h(z)=1+\sin z$ for $z\in \mathbb{D}$. Then, $h(\mathbb{D})=\{w \in \mathbb{C}:|\arcsin (w -1)|<1\}=:\Omega_s$. Let $\phi:\mathbb{C}^3\times \mathbb{D}\rightarrow \mathbb{C}$ be defined as $\phi(r,s,t;z)=1+\beta_1 s+\beta_2 t$. For $\phi\in \Psi[\Omega_s, 1+\sin z]$, we must have $\phi(r,s,t;z)\notin \Omega_s$. Through \cite[Lemma 3.3]{chosine}, we note that $\{\delta \in \mathbb{C}: |\delta -1|<\sinh 1\}$ is the smallest disk containing $\Omega_s$. So,
\begin{equation*}
        |\phi(r,s,t;z)-1|=|\beta_1 s+\beta_2 t|.
\end{equation*}
Analogous to Theorem \ref{10 thmsigmoid}, we have
\begin{align*}
       |\beta_1 s+\beta_2 t|&\geq n_1(\theta)(\beta_1+\beta_2 n_2(\theta))\\
       &\geq \nu_0(\beta_1+\beta_2\nu_1)\\
       &\geq \sinh 1.
\end{align*}
Clearly, $\phi(r,s,t;z)$ lies outside the disk $\{\delta \in \mathbb{C}: |\delta -1|<\sinh 1\}$ which is enough to conclude that $\phi(r,s,t;z)\notin \Omega_s$. Therefore, $\phi\in \Psi[\Omega_s,1+\sin z]$ and the result follows as an application of Lemma \ref{10 millertheorem}.
\end{proof}

By taking $p(z)=zf'(z)/f(z)$ in Theorems 2.7-2.12, we obtain the following corollary with $S_f$ as given by \eqref {10s corollary21}: %, we arrive at the following conclusion:
%Now, Theorems 2.7-2.12 yield the following conclusion if $p(z)=zf'(z)/f(z)$ is considered as in \eqref {10s corollary21}.
\begin{corollary}
Suppose $\beta_1$, $\beta_2>0$ and $f\in\mathcal{A}$. Then, $f\in \mathcal{S}^{*}_{s}$ if any of these conditions hold:
\begin{enumerate}[$(i)$]
   \item $S_{f}(z)\prec (1+Cz)/(1+Dz)$ and $\nu_0(\beta_1+\beta_2 \nu_1)(1-D^2)\geq (C-D)(1+|D|)$, where $-1< D<C\leq 1$.
\item $S_{f}(z)\prec 2/(1+e^{-z})$ and $\nu_0(\beta_1+\beta_2\nu_1)\geq r_0$, where $r_0\approx 0.546302$ is the positive root of the equation $r^2+2 \cot (1)r-1=0$.
\item $S_{f}(z)\prec z+\sqrt{1+z^2}$ and $\nu_0(\beta_1+\beta_2\nu_1)\geq \sqrt{2}$.
\item $S_{f}(z)\prec 1+ze^z$ and $\nu_0(\beta_1+\beta_2\nu_1)\geq e$.
\item $S_{f}(z)\prec 1+\sinh^{-1}z$ and $2\nu_0(\beta_1+\beta_2\nu_1)\geq \pi$.
\item $S_{f}(z)\prec e^z$ and $\nu_0(\beta_1+\beta_2\nu_1)\geq e-1$.
\item $S_{f}(z)\prec 1+\sin z$ and $\nu_0(\beta_1+\beta_2\nu_1)\geq \sinh 1$.
\end{enumerate}
\end{corollary}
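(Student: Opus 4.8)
The plan is to reduce the corollary directly to the seven theorems proved above by making the single substitution $p(z) = zf'(z)/f(z)$. First I would record that, for $f \in \mathcal{A}$, the function $p(z) = zf'(z)/f(z)$ is analytic on $\mathbb{D}$ with $p(0) = 1$, and that by the very definition of $\mathcal{S}^{*}_{s}$ the conclusion $f \in \mathcal{S}^{*}_{s}$ is equivalent to $p(z) \prec 1+\sin z$. Thus each item will follow from the corresponding theorem once I verify the algebraic identity
\[
1 + \beta_1 z p'(z) + \beta_2 z^2 p''(z) = S_f(z),
\]
with $S_f$ as in \eqref{10s corollary21} and $S_1, S_2, S_3$ as in \eqref{10s corollary21coefficients}.

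The computational heart is this identity, which I would establish by logarithmic differentiation. Writing $p = zf'/f$ and differentiating $\log p = \log z + \log f' - \log f$ gives $zp'/p = 1 + zf''/f' - zf'/f$; since $zf''/f' = S_2/S_1$ and $zf'/f = S_1$, this yields $zp' = S_2 - S_1^2 + S_1$, matching the $\beta_1$-coefficient of $S_f$. For the second-order term I would use $z^2 p'' = z(zp')' - zp'$. Differentiating $S_2 = z^2 f''/f$ logarithmically gives $zS_2' = 2S_2 + S_3 - S_1 S_2$ (using $zf'''/f'' = S_3/S_2$), while $z(S_1^2)' = 2S_1\,(zS_1') = 2S_1(S_2 - S_1^2 + S_1)$. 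Assembling these produces
\[
z^2 p'' = S_3 + 2S_2 + 2S_1^3 - 2S_1^2 - 3S_1 S_2,
\]
which is exactly the $\beta_2$-coefficient of $S_f$. Combining the two coefficients gives the desired identity.

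With the identity in hand, each part is immediate: the hypothesis $S_f(z) \prec h(z)$ is precisely $1 + \beta_1 zp'(z) + \beta_2 z^2 p''(z) \prec h(z)$, and the side condition on $\nu_0(\beta_1 + \beta_2\nu_1)$ in each item coincides verbatim with the hypothesis of the theorem associated with that choice of $h$, namely $(1+Cz)/(1+Dz)$, $2/(1+e^{-z})$, $z+\sqrt{1+z^2}$, $1+ze^z$, $1+\sinh^{-1}z$, $e^z$, and $1+\sin z$ respectively. Invoking the relevant theorem then delivers $p(z) \prec 1+\sin z$, that is, $f \in \mathcal{S}^{*}_{s}$.

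The only place demanding care is the bookkeeping in the second derivative: the cubic term $2S_1^3$ and the cross term $-3S_1 S_2$ arise from differentiating $S_1^2$ and from the $-S_1 S_2$ inside $zS_2'$, and it is easy to misplace a factor there. Otherwise the argument is a routine verification followed by seven direct citations, so I anticipate no genuine obstacle; indeed, since the identity is already embedded in Corollary \ref{10s corollaryfirst}, one could simply invoke it and pass straight to the theorem-by-theorem application.
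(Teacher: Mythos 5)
Your proposal is correct and follows exactly the paper's route: the paper disposes of this corollary in one line by setting $p(z)=zf'(z)/f(z)$ in the preceding theorems and citing the expression for $S_f$ already recorded in Corollary \ref{10s corollaryfirst}. Your explicit verification that $zp'=S_2-S_1^2+S_1$ and $z^2p''=S_3+2S_2+2S_1^3-2S_1^2-3S_1S_2$ is accurate and simply fills in the computation the paper leaves implicit.
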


%\subsection{Third order differential subordination}

Next, we determine the sufficient conditions on the positive constants $\beta_1$, $\beta_2$ and $\beta_3$ for various selections of $h(z)$, notably including the case when $h(z) = 1+\sin z$ itself. This study aims to establish third-order differential subordination implication, which is as follows:
\begin{equation*}
    1+\beta_1 z p'(z)+\beta_2 z^2p''(z)+\beta_3 z^3p'''(z)\prec h(z)\implies p(z)\prec 1+\sin z.
\end{equation*}
The derived conditions involve the variables $\nu_0$ and $\nu_1$, along with the constants $m$ and $k$, which are explicitly defined in \eqref{10s nu0}, \eqref{10s nu1} and Lemma \ref{lemmaformk}, respectively. We begin with the Theorem below by choosing $h(z)=\sqrt{1+z}$ as follows:

\begin{theorem}\label{10s thirdorderfirst}
Suppose $\beta_1$, $\beta_2$, $\beta_3>0$ and satisfy  $$\nu_0(\beta_1+\beta_2\nu_1+\beta_3(-m^2+3m(k-1)\nu_1))(\nu_0(\beta_1+\beta_2\nu_1+\beta_3(-m^2+3m(k-1)\nu_1))-2)\geq 1.$$  Let $p$ be analytic function in $\mathbb{D}$ with $p(0)=1$ and
\begin{equation*}
    1+\beta_1 z p'(z)+\beta_2 z^2p''(z)+\beta_3 z^3p'''(z)\prec \sqrt{1+z}.
\end{equation*}
Then $p(z)\prec 1+\sin z$.
\end{theorem}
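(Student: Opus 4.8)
The plan is to mirror the proof of Theorem \ref{10s secondorder1}, upgrading it to accommodate the extra fourth slot $u = z^3 p'''(z)$ and the third-order admissibility condition. First I would set $h(z) = \sqrt{1+z}$, so that $\Omega = h(\mathbb{D}) = \{w \in \mathbb{C} : |w^2 - 1| < 1\}$, and define $\phi : \mathbb{C}^4 \times \mathbb{D} \to \mathbb{C}$ by $\phi(r,s,t,u;z) = 1 + \beta_1 s + \beta_2 t + \beta_3 u$. The target is to show $\phi \in \Psi[\Omega, 1+\sin z]$, i.e. that $\phi(r,s,t,u;z) \notin \Omega$ whenever $r,s$ and the real parts of $t/s$ and $u/s$ obey the third-order admissibility relations recorded in Section 2 (with $n_1,n_2$ as in \eqref{10s 1}, \eqref{10s 3} and $n_3(\theta)=-\cos 2\theta$, whose minimum $-1$ is attained at $\theta=0$). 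Once this is established, the conclusion $p \prec 1+\sin z$ follows at once from the third-order subordination result, Lemma \ref{8 firsttheoremthirdorder}.

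For the admissibility verification, write $w := \beta_1 s + \beta_2 t + \beta_3 u$, so $\phi = 1+w$. As in Theorem \ref{10s secondorder1}, I would first estimate
\begin{equation*}
|\phi^2 - 1| = |w|\,|w+2| \geq |w|(|w| - 2),
\end{equation*}
reducing everything to a lower bound for $|w|$. Factoring out $\beta_1 s$ and passing to real parts gives
\begin{equation*}
|w| \geq \beta_1 |s|\,\RE\!\left(1 + \frac{\beta_2}{\beta_1}\frac{t}{s} + \frac{\beta_3}{\beta_1}\frac{u}{s}\right),
\end{equation*}
and then I would insert $|s| = m\,n_1(\theta)$ together with the two admissibility inequalities $\RE(t/s) \geq m n_2(\theta) + m - 1$ and $\RE(u/s) \geq m^2 n_3(\theta) + 3m(k-1) n_2(\theta)$. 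This is exactly where the third-order structure enters: the $\beta_3$-term contributes $\beta_3\bigl(m^2 n_3(\theta) + 3m(k-1) n_2(\theta)\bigr)$, which upon substituting the minima $n_3(\theta) \geq -1$ and $n_2(\theta) \geq \nu_1$ becomes $\beta_3(-m^2 + 3m(k-1)\nu_1)$, precisely the $m,k$-dependent expression that appears in the hypothesis.

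Collecting terms and using $n_1(\theta) \geq \nu_0$, $n_2(\theta) \geq \nu_1$ together with $m \geq 1$ (exactly as in the second-order argument), I would reduce the bound to
\begin{equation*}
|w| \geq \nu_0\bigl(\beta_1 + \beta_2 \nu_1 + \beta_3(-m^2 + 3m(k-1)\nu_1)\bigr) =: L.
\end{equation*}
Since $x \mapsto x(x-2)$ is increasing for $x \geq 1$, the hypothesis $L(L-2) \geq 1$ then yields $|\phi^2 - 1| \geq |w|(|w|-2) \geq L(L-2) \geq 1$, so that $\phi(r,s,t,u;z) \notin \Omega$ and $\phi \in \Psi[\Omega, 1+\sin z]$; the claim then follows via Lemma \ref{8 firsttheoremthirdorder}.

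The step I expect to be the genuine obstacle is this final reduction. Unlike the two lower-order contributions, the $\beta_3$-term cannot be made free of $m$ and $k$, because $n_3$ forces a $-m^2$ summand whose sign is opposite to that of the $3m(k-1)n_2$ summand. One must therefore check that replacing $n_1,n_2,n_3$ by their minimal values $\nu_0,\nu_1,-1$ really produces a \emph{lower} bound uniformly over the admissible range $k \geq m \geq 2$ and over $\theta\in[0,2\pi]$, and that the resulting quantity $L$ lies in the region where $x(x-2)$ is monotone, so that the hypothesis may be applied. The fact that the minima of $n_1$, $n_2$ and $n_3$ are simultaneously attained at $\theta = 0$ (already recorded in Section 2) is what makes this collapse legitimate; the careful bookkeeping of signs in the $u/s$ estimate is the only substantive new ingredient relative to Theorem \ref{10s secondorder1}.
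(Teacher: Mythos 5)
Your proposal follows the paper's own proof essentially step for step: the same choice of $\Omega=\{w:|w^2-1|<1\}$, the same operator $\phi(r,s,t,u;z)=1+\beta_1 s+\beta_2 t+\beta_3 u$, the same chain $|\phi^2-1|\geq |w|(|w|-2)\geq M(M-2)$ with the third-order admissibility bounds on $\RE(t/s)$ and $\RE(u/s)$ inserted, and the same appeal to Lemma \ref{8 firsttheoremthirdorder}. The only difference is that you flag explicitly the sign bookkeeping in the $\beta_3(m^2n_3+3m(k-1)n_2)$ term and the monotonicity of $x(x-2)$, points the paper passes over silently; this is a correct reconstruction of the same argument.
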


\begin{proof}
Suppose $h(z)=\sqrt{1+z}$ for $z\in \mathbb{D}$. Then, $h(\mathbb{D})=\{w \in \mathbb{C}:|w ^2-1|<1\}=:\Omega$. Let $\phi:\mathbb{C}^4\times \mathbb{D}\rightarrow\mathbb{C}$ be defined as $\phi(r,s,t,u;z)=1+\beta_1 s+\beta_2 t+\beta_3 u$. We know that $\phi\in \Psi[\Omega,1+\sin z]$ provided $\phi(r,s,t,u;z)\notin \Omega$ for $z\in \mathbb{D}$. Consider
\begin{align*}
    |(\phi(r,s,t,u;z))^2-1|&=|(1+\beta_1 s+\beta_2 t+\beta_3 u)^2-1|\\
    &\geq |\beta_1 s+\beta_2 t+\beta_3 u|(|\beta_1 s+\beta_2 t+\beta_3 u|-2)\\
    &\geq |\beta_1 s|\RE\bigg(1+\frac{\beta_2}{\beta_1}\frac{t}{s}+\frac{\beta_3}{\beta_1}\frac{u}{s}\bigg)\bigg(|\beta_1 s|\RE\bigg(1+\frac{\beta_2}{\beta_1}\frac{t}{s}+\frac{\beta_3}{\beta_1}\frac{u}{s}\bigg)-2\bigg)\\
    &\geq M(M-2),
    %&\geq m\beta_1 n_1(\theta)\bigg(1+\frac{\beta_2}{\beta_1}(m n_2(\theta)+m-1)+\frac{\beta_3}{\beta_1}(m^2n_3(\theta)+3m(k-1)n_2(\theta))\bigg)\times\\ 
  %&\quad\bigg(m\beta_1 n_1(\theta)\bigg(1+\frac{\beta_2}{\beta_1}(m n_2(\theta)+m-1)+\frac{\beta_3}{\beta_1}(m^2n_3(\theta)+3m(k-1)n_2(\theta))\bigg)-2\bigg).
\end{align*}
where $M=m\beta_1 n_1(\theta)(1+\beta_2(m n_2(\theta)+m-1)/\beta_1+\beta_3(m^2n_3(\theta)+3m(k-1)n_2(\theta))/\beta_1)$. As $m\geq 1$ and $\RE(1+n_2(\theta))>0$, we get
\begin{align*}
  |(\phi(r,s,t,u;z))^2-1|&\geq \beta_1 n_1(\theta)\bigg(1+\frac{\beta_2}{\beta_1}n_2(\theta)+\frac{\beta_3}{\beta_1}(m^2n_3(\theta)+3m(k-1)n_2(\theta))\bigg)\bigg(\beta_1 n_1(\theta)\bigg(1+\\ 
  &\quad\frac{\beta_2}{\beta_1}n_2(\theta)+\frac{\beta_3}{\beta_1}(m^2n_3(\theta)+3m(k-1)n_2(\theta))\bigg)-2\bigg)\\
  &\geq\nu_0(\beta_1+\beta_2\nu_1+\beta_3(-m^2+3m(k-1)\nu_1))(\nu_0(\beta_1+\beta_2\nu_1+\beta_3(-m^2\\
  &\quad+3m(k-1)\nu_1))-2)\\
  &\geq 1.
\end{align*}
This implies that $\phi(r,s,t;z)\notin \Omega$ and thus $\phi\in \Psi[\Omega,1+\sin z]$. The result follows as a consequence of Lemma \ref{8 firsttheoremthirdorder}.
\end{proof}

By considering $p(z)=zf'(z)/f(z)$ in Theorem \ref{10s thirdorderfirst}, we have the following:
\begin{corollary}\label{10s corollary31}
Suppose $\beta_1$, $\beta_2$, $\beta_3>0$ and $f\in\mathcal{A}$. Let
\begin{align} \label{10s corollary31formula}
 \Theta_f(z)&:=1+\beta_1 S_1+(\beta_1+2\beta_2)(S_2-{S_1}^2)+(\beta_2+3\beta_3)(2{S_1}^3-3S_1S_2+3S_3)\nonumber\\
 &\quad+\beta_3(S_4-3{S_2}^2-{6S_1}^4-4S_1S_3+12{S_1}^2S_2).  
\end{align} 
where $S_1$, $S_2$ and $S_3$ are mentioned in \eqref{10s corollary21coefficients} and $S_4:=z^4f^{(iv)}(z)/f(z)$.
Then, $f(z)\in \mathcal{S}^{*}_{s}$ provided $\Theta_{f}(z)\prec \sqrt{1+z}$ and $\nu_0(\beta_1+\beta_2\nu_1+\beta_3(-m^2+3m(k-1)\nu_1))-2\nu_0(\beta_1+\beta_2\nu_1+\beta_3(-m^2+3m(k-1)\nu_1))\geq 1$.
\end{corollary}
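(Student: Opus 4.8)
The plan is to recognize Corollary \ref{10s corollary31} as nothing more than Theorem \ref{10s thirdorderfirst} read through the substitution $p(z)=zf'(z)/f(z)$. Under this choice $p$ is analytic with $p(0)=1$, and the conclusion $p(z)\prec 1+\sin z$ is by definition the assertion $f\in\mathcal{S}^{*}_{s}$; the numerical hypothesis on $\beta_1,\beta_2,\beta_3$ is precisely the one appearing in Theorem \ref{10s thirdorderfirst}. Hence the whole content of the corollary is the algebraic identity
\[
1+\beta_1 zp'(z)+\beta_2 z^2p''(z)+\beta_3 z^3p'''(z)=\Theta_f(z),
\]
after which a single application of Theorem \ref{10s thirdorderfirst} finishes the proof.

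To establish this identity efficiently I would introduce the Euler operator $D:=z\,d/dz$ and use the standard factorization $z^kp^{(k)}=D(D-1)\cdots(D-k+1)p$; in particular $zp'=Dp$, $z^2p''=(D^2-D)p$ and $z^3p'''=(D^3-3D^2+2D)p$. Writing $S_k=z^kf^{(k)}(z)/f(z)$ as in \eqref{10s corollary21coefficients} (with $S_4=z^4f^{(iv)}(z)/f(z)$), logarithmic differentiation of $S_k$ produces the single clean recursion
\[
DS_k=k\,S_k+S_{k+1}-S_1S_k,\qquad k\geq 1,
\]
which, combined with the Leibniz rule $D(S_iS_j)=(DS_i)S_j+S_i(DS_j)$, lets one compute everything from $p=S_1$. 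Repeated application gives $Dp=S_1+S_2-S_1^2$, then $D^2p$, and finally $D^3p$ as an explicit polynomial in $S_1,S_2,S_3,S_4$.

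Substituting these into $1+\beta_1 Dp+\beta_2(D^2-D)p+\beta_3(D^3-3D^2+2D)p$ and collecting like monomials should reproduce $\Theta_f(z)$ exactly. The only genuinely laborious step, and the main obstacle, is the computation of $D^3p$: expanding $D(D-1)(D-2)S_1$ generates every monomial of weighted degree up to four in the $S_k$, so the bookkeeping must be done with care. The coefficient I would check most scrupulously is that of the linear term $S_3$, since it collects contributions from both the $\beta_2$ block (via $z^2p''$) and the $\beta_3$ block (via $z^3p'''$); fixing its value is exactly what determines the multiplier of $S_3$ inside the $(\beta_2+3\beta_3)(\cdots)$ grouping of $\Theta_f$, and a slip there is the easiest way to corrupt the identity. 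Once the identity is confirmed, the hypothesis $\Theta_f(z)\prec\sqrt{1+z}$ is literally the premise of Theorem \ref{10s thirdorderfirst}, whose conclusion $p\prec 1+\sin z$ is the asserted membership $f\in\mathcal{S}^{*}_{s}$.
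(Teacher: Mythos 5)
Your proposal is correct and is precisely the paper's route: the paper justifies this corollary with nothing more than the substitution $p(z)=zf'(z)/f(z)$ into Theorem \ref{10s thirdorderfirst}, and your Euler-operator scheme (via $DS_k=kS_k+S_{k+1}-S_1S_k$) is the right way to verify the required identity, yielding $zp'=S_1+S_2-S_1^2$, $z^2p''=2S_2+S_3-2S_1^2-3S_1S_2+2S_1^3$ and $z^3p'''=S_4+3S_3-3S_2^2-9S_1S_2-4S_1S_3+6S_1^3+12S_1^2S_2-6S_1^4$. Carrying out the bookkeeping vindicates exactly the coefficient you flagged: matching the $S_3$ terms forces the grouped factor to be $(\beta_2+3\beta_3)\left(2S_1^3-3S_1S_2+S_3\right)$ rather than the printed $\cdots+3S_3$ (and the displayed inequality in the corollary is a misprint for the condition $\nu_0(\cdots)\left(\nu_0(\cdots)-2\right)\geq 1$ of Theorem \ref{10s thirdorderfirst}); with these corrections the identity $1+\beta_1 zp'+\beta_2 z^2p''+\beta_3 z^3p'''=\Theta_f(z)$ holds and the corollary follows as you describe.
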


\begin{theorem}
Suppose $\beta_1$, $\beta_2$, $\beta_3>0$ and $-1< D<C\leq 1$ with $\nu_0(\beta_1+\beta_2\nu_1-m^2\beta_3+3m\beta_3(k-1)\nu_1)(1-D^2)\geq (1+|D|)(C-D)$. Let $p$ be analytic function in $\mathbb{D}$ with $p(0)=1$ and
\begin{equation*}
    1+\beta_1 z p'(z)+\beta z^2p''(z)+\beta_3 z^3p'''(z)\prec \frac{1+Cz}{1+Dz}.
\end{equation*}
Then $p(z)\prec 1+\sin z$.
\end{theorem}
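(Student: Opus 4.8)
The plan is to fuse the disk-geometry argument used for the second-order Janowski theorem with the three-term real-part estimate from the third-order Theorem \ref{10s thirdorderfirst}. First I would set $h(z)=(1+Cz)/(1+Dz)$ and record that $\Omega:=h(\mathbb{D})$ is the open disk centered at $(1-CD)/(1-D^2)$ of radius $(C-D)/(1-D^2)$. I then introduce the admissible operator $\phi(r,s,t,u;z)=1+\beta_1 s+\beta_2 t+\beta_3 u$, so that proving $\phi\in\Psi[\Omega,1+\sin z]$ amounts to verifying $\phi(r,s,t,u;z)\notin\Omega$ under the admissibility relations attached to $q(z)=1+\sin z$.

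The heart of the proof is a distance-to-center estimate. Using the reverse triangle inequality together with the identity $(1-CD)/(1-D^2)-1=-D(C-D)/(1-D^2)$, I would obtain
\begin{equation*}
\left|\phi(r,s,t,u;z)-\frac{1-CD}{1-D^2}\right|\geq |\beta_1 s+\beta_2 t+\beta_3 u|-\frac{|D|(C-D)}{1-D^2}.
\end{equation*}
The quantity $|\beta_1 s+\beta_2 t+\beta_3 u|$ is then bounded from below exactly as in Theorem \ref{10s thirdorderfirst}: factoring out $|\beta_1 s|$, passing to the real part, and substituting the admissibility data $|s|=mn_1(\theta)$, $\RE(1+t/s)\geq m(1+n_2(\theta))$ and $\RE(u/s)\geq m^2 n_3(\theta)+3m(k-1)n_2(\theta)$. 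Since $\beta_1,\beta_2,\beta_3>0$ and $n_1,n_2,n_3$ attain their minima $\nu_0,\nu_1,-1$ simultaneously at $\theta=0$, this produces
\begin{equation*}
|\beta_1 s+\beta_2 t+\beta_3 u|\geq \nu_0\bigl(\beta_1+\beta_2\nu_1+\beta_3(-m^2+3m(k-1)\nu_1)\bigr).
\end{equation*}

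Combining the two displays and invoking the hypothesis $\nu_0(\beta_1+\beta_2\nu_1-m^2\beta_3+3m\beta_3(k-1)\nu_1)(1-D^2)\geq(1+|D|)(C-D)$ yields
\begin{equation*}
\left|\phi(r,s,t,u;z)-\frac{1-CD}{1-D^2}\right|\geq \frac{(1+|D|)(C-D)}{1-D^2}-\frac{|D|(C-D)}{1-D^2}=\frac{C-D}{1-D^2},
\end{equation*}
which is precisely the radius of $\Omega$. Hence $\phi(r,s,t,u;z)$ lies on or outside $\partial\Omega$, so $\phi\notin\Omega$ and $\phi\in\Psi[\Omega,1+\sin z]$; the conclusion $p\prec 1+\sin z$ then follows from Lemma \ref{8 firsttheoremthirdorder}. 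I expect no genuine conceptual difficulty, as the argument simply merges two templates already established in the excerpt. The only point demanding care is ensuring the hypothesis keeps the bracket $\beta_1+\beta_2\nu_1+\beta_3(-m^2+3m(k-1)\nu_1)$ positive---so that the product estimate against the factor $n_1(\theta)\geq\nu_0$ remains valid---and checking that the three minima of $n_1,n_2,n_3$ really are co-located at $\theta=0$, which was verified when these functions were introduced.
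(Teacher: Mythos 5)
Your proposal is correct and follows essentially the same route as the paper: the same reverse-triangle estimate of the distance from $\phi(r,s,t,u;z)$ to the center $(1-CD)/(1-D^2)$, the same factoring of $|\beta_1 s|$ and passage to real parts using the admissibility data for $q(z)=1+\sin z$, and the same appeal to Lemma \ref{8 firsttheoremthirdorder}. The caveat you raise about the positivity of the bracket $\beta_1+\beta_2\nu_1+\beta_3(-m^2+3m(k-1)\nu_1)$ is a fair one, and the paper passes over it silently as well.
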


\begin{proof}
Let $h(z)=(1+Cz)/(1+Dz)$ for $z\in \mathbb{D}$. Then, $h(\mathbb{D})=\{w \in \mathbb{C}:|w -(1-CD)/(1-D^2)|<(C-D)/(1-D^2)\}=:\Omega$. Let $\phi:\mathbb{C}^4\times \mathbb{D}\rightarrow\mathbb{C}$ be defined as $\phi(r,s,t,u;z)=1+\beta_1 s+\beta_2 t+\beta_3 u$. For $\phi\in\Psi[\Omega,1+\sin z]$, it is needed that $\phi(r,s,t,u;z)\notin \Omega$ for $z\in \mathbb{D}$. Consider
\begin{align*}
   \bigg| \phi(r,s,t,u;z)-\frac{1-CD}{1-D^2}\bigg|&=\bigg|1+\beta_1 s+\beta_2 t+\beta_3 u-\frac{1-CD}{1-D^2}\bigg|\\
   &\geq |\beta_1 s+\beta_2 t+\beta_3 u|-\frac{|D|(C-D)}{1-D^2}\\
   &\geq |\beta_1 s|\RE\bigg(1+\frac{\beta_2}{\beta_1}\frac{t}{s}+\frac{\beta_3}{\beta_1}\frac{u}{s}\bigg)-\frac{|D|(C-D)}{1-D^2}\\
   &\geq m\beta_1 n_1(\theta)\bigg(1+\frac{\beta_2}{\beta_1}(mn_2(\theta)+m-1)+\frac{\beta_3}{\beta_1}(m^2n_3(\theta)+3m(k-1)n_2(\theta))\bigg)\\
   &\quad-\frac{|D|(C-D)}{1-D^2}.
\end{align*}
Proceeding as in the proof of Theorem \ref{10s thirdorderfirst} and the fact that $m\geq 1$, we obtain
\begin{align*}
    \bigg| \phi(r,s,t,u;z)-\frac{1-CD}{1-D^2}\bigg|&\geq %\beta_1 n_1(\theta)\bigg(1+\frac{\beta_2}{\beta_1}n_2(\theta)+\frac{\beta_3}{\beta_1}(m^2n_3(\theta)+3m(k-1)n_2(\theta))\bigg)-\frac{|D|(C-D)}{1-D^2}\\
     \nu_0\bigg(\beta_1+\beta_2\nu_1+\beta_3(-m^2+3m(k-1)\nu_1)\bigg)-\frac{|D|(C-D)}{1-D^2}\\
    &\geq \frac{C-D}{1-D^2}.
\end{align*}
From the above calculations, we conclude that $\phi\in \Psi[\Omega,1+\sin z]$ and hence the result follows as a consequence of Lemma \ref{8 firsttheoremthirdorder}.
\end{proof}

\begin{comment}
\begin{corollary}
By considering $p(z)=zf'(z)/f(z)$ in above theorem and through \eqref{10s corollary31formula}, we deduce that $f\in \mathcal{S}^{*}_s$ provided
 $\Theta_{f}(z)\prec (1+Cz)/(1+Dz)$ and $\nu_0(\beta_1+\beta_2\nu_1-m^2\beta_3+3m\beta_3(k-1)\nu_1)(1-D^2)\geq (1+|D|)(C-D)$, whenever $-1< D<C\leq 1$. 
\end{corollary}
\end{comment}

%Continuing, we choose the functions $h(z)$ to be $2/(1+e^{-z})$ and $z+\sqrt{1+z^2}$, and proceed with the following steps:

\begin{theorem}\label{10s sigmoid3}
Suppose $\beta_1$, $\beta_2$, $\beta_3>0$ and $\nu_0(\beta_1+\beta_2\nu_1-m^2\beta_3+3m\beta_3(k-1)\nu_1)\geq r_0$, where $r_0\approx 0.546302$ is the positive root of the equation $r^2+2 \cot (1)r-1=0$. Let $p$ be analytic function in $\mathbb{D}$ with $p(0)=1$ and
\begin{equation*}
1+\beta_1 zp'(z)+\beta_2 z^2p''(z)+\beta_3 z^3p'''(z)\prec \frac{2}{1+e^{-z}}.       
\end{equation*}
Then $p(z)\prec 1+\sin z$.
\end{theorem}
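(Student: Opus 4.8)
The plan is to promote the second-order sigmoid argument of Theorem~\ref{10s thmsigmoid} to third order in exactly the way Theorem~\ref{10s thirdorderfirst} promotes Theorem~\ref{10s secondorder1}. First I would fix $h(z)=2/(1+e^{-z})$ and record that $h(\mathbb{D})=\{w\in\mathbb{C}:|\log(w/(2-w))|<1\}=:\Omega$, and define the candidate operator $\phi:\mathbb{C}^4\times\mathbb{D}\to\mathbb{C}$ by $\phi(r,s,t,u;z)=1+\beta_1 s+\beta_2 t+\beta_3 u$. By Lemma~\ref{8 firsttheoremthirdorder} it suffices to show $\phi\in\Psi[\Omega,1+\sin z]$, i.e.\ that $\phi(r,s,t,u;z)\notin\Omega$ whenever $r,s,t,u$ obey the third-order admissibility relations with $k\ge m\ge 2$.

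The heart of the matter is a lower bound for $|\beta_1 s+\beta_2 t+\beta_3 u|$. Following Theorem~\ref{10s thirdorderfirst}, I would start from
\[
|\beta_1 s+\beta_2 t+\beta_3 u|\ge |\beta_1 s|\,\RE\Big(1+\tfrac{\beta_2}{\beta_1}\tfrac{t}{s}+\tfrac{\beta_3}{\beta_1}\tfrac{u}{s}\Big),
\]
and then insert $|s|=m\,n_1(\theta)$ from \eqref{10s 1} together with the admissibility estimates $\RE(t/s)\ge m n_2(\theta)+m-1$ and $\RE(u/s)\ge m^2 n_3(\theta)+3m(k-1)n_2(\theta)$, where $n_2(\theta)$ is given in \eqref{10s 3} and $n_3(\theta)=-\cos 2\theta$. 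Invoking $m\ge 1$ and the fact that $n_1,n_2,n_3$ attain their minima $\nu_0,\nu_1,-1$ at $\theta=0$ (see \eqref{10s nu0}, \eqref{10s nu1}), this chain should reduce to
\[
|\beta_1 s+\beta_2 t+\beta_3 u|\ge \nu_0\big(\beta_1+\beta_2\nu_1+\beta_3(-m^2+3m(k-1)\nu_1)\big)\ge r_0,
\]
the final inequality being precisely the standing hypothesis.

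With the modulus bounded below by $r_0$, the conclusion then follows as in Theorem~\ref{10s thmsigmoid}: writing $\phi/(2-\phi)=(1+\beta_1 s+\beta_2 t+\beta_3 u)/(1-(\beta_1 s+\beta_2 t+\beta_3 u))$ and applying Lemma~\ref{prilemma42} with argument $\beta_1 s+\beta_2 t+\beta_3 u$ gives $|\log(\phi/(2-\phi))|\ge 1$, so $\phi(r,s,t,u;z)\notin\Omega$. Hence $\phi\in\Psi[\Omega,1+\sin z]$, and Lemma~\ref{8 firsttheoremthirdorder} yields $p\prec 1+\sin z$.

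The step demanding the most care is the reduction of the $m$- and $k$-dependent expression $m\,n_1(\theta)(\beta_1+\beta_2(mn_2(\theta)+m-1)+\beta_3(m^2 n_3(\theta)+3m(k-1)n_2(\theta)))$ to the clean quantity $\nu_0(\beta_1+\beta_2\nu_1+\beta_3(-m^2+3m(k-1)\nu_1))$. Because $n_2(\theta)$ is negative and $3m(k-1)\ge 0$, one must check the direction of each substitution—bounding $n_3(\theta)$ below by $-1$, $n_2(\theta)$ below by $\nu_1$, and handling the $m$-monotonicity of $mn_2(\theta)+m-1$—so that the overall inequality points the right way; this is also where one must ensure the bracketed factor stays nonnegative before discarding the leading $m$. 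Everything else is a faithful transcription of the second-order sigmoid proof with the extra $\beta_3 u$ term carried through.
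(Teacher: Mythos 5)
Your proposal matches the paper's proof essentially step for step: the same operator $\phi(r,s,t,u;z)=1+\beta_1 s+\beta_2 t+\beta_3 u$, the same lower bound $|\beta_1 s+\beta_2 t+\beta_3 u|\geq m\beta_1 n_1(\theta)\bigl(1+\tfrac{\beta_2}{\beta_1}(mn_2(\theta)+m-1)+\tfrac{\beta_3}{\beta_1}(m^2n_3(\theta)+3m(k-1)n_2(\theta))\bigr)$ reduced via $m\geq 1$ and the minima $\nu_0,\nu_1,-1$ to the hypothesis bound $r_0$, followed by Lemma~\ref{prilemma42} and Lemma~\ref{8 firsttheoremthirdorder}. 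The monotonicity caveats you flag at the end are real but are handled no more explicitly in the paper's own argument, so your write-up is a faithful reconstruction.
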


\begin{proof}
Assume $h(z)=2/(1+e^{-z})$ for $z\in \mathbb{D}$. Then, $h(\mathbb{D})=\{w \in \mathbb{C}:|\log (w /(2-w ))|<1\}=:\Omega$. Let $\phi:\mathbb{C}^4\times \mathbb{D}\rightarrow \mathbb{C}$ be defined as $\phi(r,s,t,u;z)=1+\beta_1 s+\beta_2 t+\beta_3 u$. For $\phi\in \Psi[\Omega, 1+\sin z]$, we must have $\phi(r,s,t,u;z)\notin \Omega$. Consider
\begin{align*}
    |\beta_1 s+\beta_2 t+\beta_3 u|&=\beta_1 |s|\bigg|1+\frac{\beta_2}{\beta_1}\frac{t}{s}+\frac{\beta_3}{\beta_1}\frac{u}{s}\bigg|\\
    &\geq \beta_1 |s|\RE\bigg(1+\frac{\beta_2}{\beta_1}\frac{t}{s}+\frac{\beta_3}{\beta_1}\frac{u}{s}\bigg)\\
    &\geq m \beta_1 n_1(\theta)\bigg(1+\frac{\beta_2}{\beta_1}(mn_2(\theta)+m-1)+\frac{\beta_3}{\beta_1}(m^2n_3(\theta)+3m(k-1)n_2(\theta))\bigg).
\end{align*}
As $m\geq 1$ and $\RE (1+n_2(\theta))>0$, we have

\begin{align}\label{10s 73}
    |\beta_1 s+\beta_2 t+\beta_3 u|&\geq n_1(\theta)\bigg(\beta_1+\beta_2 n_2(\theta)+\beta_3 (m^2n_3(\theta)+3m(k-1)n_2(\theta))\bigg)\nonumber\\
    &\geq \nu_0(\beta_1+\beta_2\nu_1-m^2\beta_3+3m\beta_3(k-1)\nu_1)\nonumber\\
    &\geq r_0.
\end{align}
Now, we consider
\begin{equation*}
    \bigg|\log \bigg(\frac{\phi(r,s,t,u;z)}{2-\phi(r,s,t,u;z)}\bigg)\bigg|=\bigg|\log\bigg(\frac{1+\beta_1 s+\beta_2 t+\beta_3 u}{1-(\beta_1 s+\beta_2 t+\beta_3 u)}\bigg)\bigg|.
\end{equation*}
In view of Lemma \ref{prilemma42} and \eqref{10s 73}, we have
\begin{equation*}
    \bigg|\log \bigg(\frac{1+\beta_1 s+\beta_2 t+\beta_3 u}{1-(\beta_1 s+\beta_2 t+\beta_3 u)}\bigg)\bigg|\geq 1,
\end{equation*}
which implies that $\phi\in \Psi[\Omega,1+\sin z]$. Thus, The result follows as an application of Lemma \ref{8 firsttheoremthirdorder}.
\end{proof}

\begin{theorem}
Suppose $\beta_1$, $\beta_2$, $\beta_3>0$ and $\nu_0(\beta_1+\beta_2\nu_1-m^2\beta_3+3m\beta_3 (k-1)\nu_1)\geq \sqrt{2}$. Let $p$ be analytic function in $\mathbb{D}$ with $p(0)=1$ and
\begin{equation*}
        1+\beta_1 zp'(z)+\beta_2 z^2p''(z)+\beta_3 z^3p'''(z)\prec z+\sqrt{1+z^2}.
\end{equation*}
Then $p(z)\prec 1+\sin z$.
\end{theorem}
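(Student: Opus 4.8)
The plan is to mirror the structure of the second-order crescent theorem while importing the third-order estimate from Theorem~\ref{10s sigmoid3}. First I would set $h(z)=z+\sqrt{1+z^2}$ and identify $\Omega:=h(\mathbb{D})=\{w\in\mathbb{C}:|w^2-1|<2|w|\}$, the lune bounded by the two circular arcs $C_1:|w-1|=\sqrt{2}$ and $C_2:|w+1|=\sqrt{2}$ depicted in Fig.~\ref{crescent10}. The key geometric observation, already recorded in the corresponding second-order result, is that $\Omega$ is contained in the disk bounded by $C_1$; consequently any point with $|w-1|\ge\sqrt{2}$ lies outside $\Omega$. I would then introduce the admissible operator $\phi(r,s,t,u;z)=1+\beta_1 s+\beta_2 t+\beta_3 u$ and reduce the problem to checking $\phi(r,s,t,u;z)\notin\Omega$ for the boundary data prescribed by $\Psi[\Omega,1+\sin z]$, noting that $|\phi(r,s,t,u;z)-1|=|\beta_1 s+\beta_2 t+\beta_3 u|$.

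The heart of the argument is the lower bound $|\beta_1 s+\beta_2 t+\beta_3 u|\ge\sqrt 2$, which I would establish exactly as in Theorem~\ref{10s sigmoid3}. Factoring out $\beta_1|s|$ and passing to real parts gives $\beta_1|s|\,\RE(1+(\beta_2/\beta_1)(t/s)+(\beta_3/\beta_1)(u/s))$; substituting the admissibility data $|s|=m\,n_1(\theta)$, $\RE(t/s)\ge m\,n_2(\theta)+m-1$ and the third-order relation $\RE(u/s)\ge m^2 n_3(\theta)+3m(k-1)n_2(\theta)$, then invoking $m\ge 1$ together with the pointwise minima $n_1\ge\nu_0$, $n_2\ge\nu_1$ and $n_3\ge-1$ (all attained at $\theta=0$), collapses the $m$-dependence and reduces the expression to $\nu_0(\beta_1+\beta_2\nu_1-m^2\beta_3+3m\beta_3(k-1)\nu_1)$. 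By hypothesis this quantity is $\ge\sqrt 2$, so $|\phi(r,s,t,u;z)-1|\ge\sqrt 2$.

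With this bound in hand, the geometric observation from the first paragraph forces $\phi(r,s,t,u;z)$ to lie on or outside $C_1$, hence outside $\Omega$, so that $\phi\in\Psi[\Omega,1+\sin z]$. The desired conclusion $p(z)\prec 1+\sin z$ then follows immediately from Lemma~\ref{8 firsttheoremthirdorder}.

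I expect the only delicate point to be the geometric containment $\Omega\subset\{w:|w-1|<\sqrt 2\}$, i.e.\ the assertion that passing outside the single circle $C_1$ (rather than both arcs of the lune) already suffices to leave $\Omega$. This rests on the fact that the rightmost extent of the lune, $1+\sqrt 2$, sits exactly on $C_1$, while its leftmost point $\sqrt 2-1$ lies strictly inside the $C_1$-disk, so the lune never escapes that disk. Since this containment was already justified for the second-order version with the same $\Omega$, the remaining work is routine: one simply reuses the computation of Theorem~\ref{10s sigmoid3} with the threshold $\sqrt 2$ in place of $r_0$.
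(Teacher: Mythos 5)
Your proposal is correct and takes essentially the same approach as the paper's own proof: the same admissible operator $\phi(r,s,t,u;z)=1+\beta_1 s+\beta_2 t+\beta_3 u$, the same reduction to the lower bound $|\beta_1 s+\beta_2 t+\beta_3 u|\ge \nu_0(\beta_1+\beta_2\nu_1-m^2\beta_3+3m\beta_3(k-1)\nu_1)\ge\sqrt{2}$ imported from Theorem~\ref{10s sigmoid3}, and the same geometric observation that lying on or outside the circle $C_1$ already forces $\phi(r,s,t,u;z)\notin\Omega$, after which Lemma~\ref{8 firsttheoremthirdorder} concludes.
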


\begin{proof}
Take $h(z)=z+\sqrt{1+z^2}$ for $z\in \mathbb{D}$. Then, $h(\mathbb{D})=\{w \in \mathbb{C}:|w ^2-1|<2|w |\}=:\Omega$. Let $\phi:\mathbb{C}^4\times \mathbb{D}\rightarrow \mathbb{C}$ be defined as $\phi(r,s,t,u;z)=1+\beta_1 s+\beta_2 t+\beta_3 u$. For $\phi\in \Psi[\Omega, 1+\sin z]$, we must have $\phi(r,s,t,u;z)\notin \Omega$. From the geometry of $z+\sqrt{1+z^2}$ (see Fig. \ref{crescent10}), we note that $\Omega$ is constructed by two circles
\begin{equation*}
        C_1:|z-1|=\sqrt{2}\quad\text{and}\quad C_2:|z+1|=\sqrt{2}.
\end{equation*}
It is obvious that $\Omega$ contains the disk enclosed by $C_1$ and excludes the portion of the disk enclosed by $C_1\cap C_2$. We have
\begin{equation*}
        |\phi(r,s,t,u;z)-1|=|\beta_1 s+\beta_2 t+\beta_3 u|.
\end{equation*}
Analogous to Theorem \ref{10s sigmoid3}, we have
\begin{align*}
       |\beta_1 s+\beta_2 t+\beta_3 u|&\geq n_1(\theta)\bigg(\beta_1+\beta_2 n_2(\theta)+\beta_3(m^2n_3(\theta)+3m(k-1)n_2(\theta))\bigg)\\
       &\geq \nu_0(\beta_1+\beta_2\nu_1-m^2\beta_3+3m\beta_3 (k-1)\nu_1)\\
       &\geq \sqrt{2}.
\end{align*}
Thus, we can say that $\phi(r,s,t,u;z)$ lies outside the circle $C_1$ which is sufficient to deduce that $\phi(r,s,t,u;z)\notin \Omega$. Therefore, $\phi\in \Psi[\Omega,1+\sin z]$ and thus the result follows as a consequence of Lemma \ref{8 firsttheoremthirdorder}.
\end{proof}

%Upon selecting $h(z)$ as $1+ze^z$ and $1+\sinh^{-1}z$, the resultant outcome can be observed as follows:

\begin{theorem}
Suppose $\beta_1$, $\beta_2$, $\beta_3>0$ and $\nu_0(\beta_1+\beta_2\nu_1-m^2\beta_3+3m\beta_3(k-1)\nu_1)\geq e$. Let $p$ be analytic function in $\mathbb{D}$ with $p(0)=1$ and
\begin{equation*}
        1+\beta_1 zp'(z)+\beta_2 z^2p''(z)+\beta_3 z^3p'''(z)\prec 1+ze^z.
\end{equation*}
Then $p(z)\prec 1+\sin z$.
\end{theorem}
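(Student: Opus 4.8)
The plan is to mirror the third-order arguments already carried out for the disk-type targets (in particular Theorem \ref{10s sigmoid3}), but using the enclosing-disk geometry of the cardioid-type domain $1+ze^z$ exactly as in the corresponding second-order theorem. First I would set $h(z)=1+ze^z$, write $\Omega:=h(\mathbb{D})$, and define the admissible operator $\phi:\mathbb{C}^4\times\mathbb{D}\to\mathbb{C}$ by $\phi(r,s,t,u;z)=1+\beta_1 s+\beta_2 t+\beta_3 u$. By Lemma \ref{8 firsttheoremthirdorder} it suffices to verify that $\phi\in\Psi[\Omega,1+\sin z]$, i.e.\ that $\phi(r,s,t,u;z)\notin\Omega$ whenever $(r,s,t,u)$ run over the admissibility data attached to $q(z)=1+\sin z$.

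The geometric input is supplied by \cite[Lemma 3.3]{kumar-ganganiaCardioid-2021}: the smallest disk centered at $1$ containing $\Omega$ is $\{\delta\in\mathbb{C}:|\delta-1|<e\}$. Hence it is enough to show $|\phi(r,s,t,u;z)-1|\geq e$, and since $\phi(r,s,t,u;z)-1=\beta_1 s+\beta_2 t+\beta_3 u$, the whole problem reduces to bounding $|\beta_1 s+\beta_2 t+\beta_3 u|$ from below.

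For this lower bound I would proceed exactly as in \eqref{10s 73}. Passing from modulus to real part via $|w|\geq\RE w$ and inserting the admissibility relations $s=m\zeta q'(\zeta)$ (so $|s|=m\,n_1(\theta)$), together with $\RE(1+t/s)\geq m(1+n_2(\theta))$ and $\RE(u/s)\geq m^2 n_3(\theta)+3m(k-1)n_2(\theta)$, and then using $m\geq 1$, gives
\[ |\beta_1 s+\beta_2 t+\beta_3 u|\geq n_1(\theta)\bigl(\beta_1+\beta_2 n_2(\theta)+\beta_3(m^2 n_3(\theta)+3m(k-1)n_2(\theta))\bigr). \]
The key monotonicity step is to replace each $\theta$-dependent factor by its extremal value, namely $n_1(\theta)\geq\nu_0$, $n_2(\theta)\geq\nu_1$ and $n_3(\theta)\geq-1$, which yields
\[ |\beta_1 s+\beta_2 t+\beta_3 u|\geq\nu_0\bigl(\beta_1+\beta_2\nu_1-m^2\beta_3+3m\beta_3(k-1)\nu_1\bigr)\geq e, \]
the last inequality being precisely the hypothesis. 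Consequently $\phi(r,s,t,u;z)$ lies outside the enclosing disk, so $\phi(r,s,t,u;z)\notin\Omega$, i.e.\ $\phi\in\Psi[\Omega,1+\sin z]$, and Lemma \ref{8 firsttheoremthirdorder} closes the argument.

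The only genuinely delicate point is the sign bookkeeping in the extremal substitution: one must confirm that the bracketed expression remains nonnegative so that multiplying it by the positive factor $n_1(\theta)\geq\nu_0>0$ preserves the inequality, and one must use $n_3(\theta)\geq-1$ as the correct worst-case bound because it is multiplied by the positive quantity $\beta_3 m^2$ (hence contributes $-m^2\beta_3$), while the coefficient $3m\beta_3(k-1)$ of $n_2(\theta)$ is nonnegative since $k\geq m\geq 1$. Everything else is a verbatim transcription of the second-order $1+ze^z$ proof combined with the third-order template of Theorem \ref{10s sigmoid3}.
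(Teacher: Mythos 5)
Your proposal is correct and follows essentially the same route as the paper: the enclosing disk $\{\delta\in\mathbb{C}:|\delta-1|<e\}$ from \cite[Lemma 3.3]{kumar-ganganiaCardioid-2021}, the lower bound $|\beta_1 s+\beta_2 t+\beta_3 u|\geq \nu_0(\beta_1+\beta_2\nu_1-m^2\beta_3+3m\beta_3(k-1)\nu_1)\geq e$ obtained exactly as in \eqref{10s 73}, and the conclusion via Lemma \ref{8 firsttheoremthirdorder}. Your added remark on the sign bookkeeping in the extremal substitution is a point the paper glosses over, but it does not change the argument.
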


\begin{proof}
Let $h(z)=1+ze^z$ for $z\in \mathbb{D}$, then $\Omega:=h(\mathbb{D})$. Let $\phi:\mathbb{C}^4\times \mathbb{D}\rightarrow \mathbb{C}$ be defined as $\phi(r,s,t,u;z)=1+\beta_1 s+\beta_2 t+\beta_3 u$. For $\phi\in \Psi[\Omega, 1+\sin z]$, we must have $\phi(r,s,t,u;z)\notin \Omega$. From \cite[Lemma 3.3]{kumar-ganganiaCardioid-2021}, we note that the smallest disk containing $\Omega$ is $\{\delta \in\mathbb{C}: |\delta -1|<e\}$. Also,
\begin{equation*}
        |\phi(r,s,t,u;z)-1|=|\beta_1 s+\beta_2 t+\beta_3 u|.
\end{equation*}
On the similar lines of the proof of Theorem \ref{10s sigmoid3}, we have
\begin{align*}
       |\beta_1 s+\beta_2 t+\beta_3 u|&\geq n_1(\theta)\bigg(\beta_1+\beta_2 n_2(\theta)+\beta_3(m^2n_3(\theta)+3m(k-1)n_2(\theta))\bigg)\\
       &\geq \nu_0(\beta_1+\beta_2\nu_1-m^2\beta_3+3m\beta_3(k-1)\nu_1)\\
       &\geq e.
\end{align*}
Clearly, $\phi(r,s,t,u;z)$ lies outside the disk $\{\delta \in \mathbb{C}: |\delta -1|<e\}$ which is enough to conclude that $\phi(r,s,t,u;z)\notin \Omega$. Therefore, $\phi\in \Psi[\Omega,1+\sin z]$ and hence, the result follows as an application of Lemma \ref{8 firsttheoremthirdorder}.
\end{proof}

\begin{theorem}
Suppose $\beta_1$, $\beta_2$, $\beta_3>0$ and $2\nu_0(\beta_1+\beta_2\nu_1 -m^2\beta_3+3m\beta_3(k-1)\nu_1)\geq \pi$.  Let $p$ be analytic function in $\mathbb{D}$ with $p(0)=1$ and
\begin{equation*}
        1+\beta_1 zp'(z)+\beta_2 z^2p''(z)+\beta_3 z^3p'''(z)\prec 1+\sinh^{-1} z.
\end{equation*}
Then $p(z)\prec 1+\sin z$.
\end{theorem}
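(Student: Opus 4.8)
The plan is to reproduce the template of the preceding third-order theorems---most closely the case $h(z)=1+ze^z$, which also rests on a smallest-enclosing-disk argument---adapting only the radius of the disk. First I would set $h(z)=1+\sinh^{-1}z$, so that $h(\mathbb{D})=\{w\in\mathbb{C}:|\sinh(w-1)|<1\}=:\Omega_\rho$, and introduce the operator $\phi:\mathbb{C}^4\times\mathbb{D}\to\mathbb{C}$ defined by $\phi(r,s,t,u;z)=1+\beta_1 s+\beta_2 t+\beta_3 u$. By Lemma~\ref{8 firsttheoremthirdorder}, the conclusion $p\prec 1+\sin z$ follows once I show that $\phi\in\Psi[\Omega_\rho,1+\sin z]$, i.e.\ that $\phi(r,s,t,u;z)\notin\Omega_\rho$ for every admissible tuple $(r,s,t,u)$ and every $z\in\mathbb{D}$.

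The geometric ingredient is borrowed from \cite[Remark 2.7]{kush}: the smallest disk containing the petal-shaped region $\Omega_\rho$ is $\{\delta\in\mathbb{C}:|\delta-1|<\pi/2\}$. Hence it suffices to push $\phi(r,s,t,u;z)$ outside this disk, namely to prove $|\phi(r,s,t,u;z)-1|=|\beta_1 s+\beta_2 t+\beta_3 u|\geq\pi/2$. I would estimate this modulus exactly as in the proof of Theorem~\ref{10s sigmoid3}: factor out $\beta_1 s$, replace $|w|$ by $\RE w$, and substitute the admissibility values $|s|=m\,n_1(\theta)$, $\RE(1+t/s)\geq m(1+n_2(\theta))$ and $\RE(u/s)\geq m^2 n_3(\theta)+3m(k-1)n_2(\theta)$. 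After invoking $m\geq 1$ on the first- and second-order contributions and then minimizing $n_1,n_2,n_3$ over $\theta$ (all extrema occurring at $\theta=0$, giving $\nu_0$, $\nu_1$ and $-1$ respectively), the chain terminates at
\begin{equation*}
|\beta_1 s+\beta_2 t+\beta_3 u|\;\geq\;\nu_0\bigl(\beta_1+\beta_2\nu_1-m^2\beta_3+3m\beta_3(k-1)\nu_1\bigr)\;\geq\;\frac{\pi}{2},
\end{equation*}
where the final inequality is precisely the standing hypothesis $2\nu_0(\beta_1+\beta_2\nu_1-m^2\beta_3+3m\beta_3(k-1)\nu_1)\geq\pi$. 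This forces $\phi(r,s,t,u;z)$ to lie outside the enclosing disk and hence outside $\Omega_\rho$, so $\phi\in\Psi[\Omega_\rho,1+\sin z]$ and Lemma~\ref{8 firsttheoremthirdorder} closes the proof.

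I expect no structural difficulty, as the scaffolding is identical to the earlier third-order results; the one point demanding care is the sign bookkeeping in the $\beta_3$-term. Since $n_3(\theta)$ attains its minimum $-1$ and $n_2(\theta)$ its minimum $\nu_1<0$ simultaneously at $\theta=0$, the third-order contribution collapses to $-m^2\beta_3+3m\beta_3(k-1)\nu_1$, and one must verify that the reduction using $m\geq 1$ remains legitimate in the presence of this possibly negative quantity, so that the displayed lower bound is genuinely valid for all $m$ and $k$ furnished by Lemma~\ref{lemmaformk}. Granting the disk radius $\pi/2$ from \cite{kush} and this verification, the non-containment $\phi\notin\Omega_\rho$ is immediate and the theorem follows at once.
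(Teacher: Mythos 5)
Your proposal coincides with the paper's own proof: the same operator $\phi(r,s,t,u;z)=1+\beta_1 s+\beta_2 t+\beta_3 u$, the same smallest enclosing disk $\{\delta:|\delta-1|<\pi/2\}$ from \cite[Remark 2.7]{kush}, the same estimation chain (as in Theorem \ref{10s sigmoid3}) terminating at $\nu_0(\beta_1+\beta_2\nu_1-m^2\beta_3+3m\beta_3(k-1)\nu_1)\geq\pi/2$, and the same appeal to Lemma \ref{8 firsttheoremthirdorder}. The sign-bookkeeping caveat you flag is real but is handled no more carefully in the paper than in your sketch, so the two arguments are essentially identical.
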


\begin{proof}
Assume $h(z)=1+\sinh^{-1} z$ for $z\in \mathbb{D}$. Then, $h(\mathbb{D})=\{w \in\mathbb{C}:|\sinh (w -1)|<1\}=:\Omega_{\rho}$. Let $\phi:\mathbb{C}^4\times \mathbb{D}\rightarrow \mathbb{C}$ be defined as $\phi(r,s,t,u;z)=1+\beta_1 s+\beta_2 t+\beta_3 u$. For $\phi\in \Psi[\Omega_\rho, 1+\sin z]$, we must have $\phi(r,s,t,u;z)\notin \Omega_\rho$. Through \cite[Remark 2.7]{kush}, we note that the smallest disk containing $\Omega_\rho$ is $\{\delta \in\mathbb{C}: |\delta -1|<\pi/2\}$. Thus
\begin{equation*}
        |\phi(r,s,t,u;z)-1|=|\beta_1 s+\beta_2 t+\beta_3 u|.
\end{equation*}
Proceeding on the similar lines of Theorem \ref{10s sigmoid3}, we have
\begin{align*}
       |\beta_1 s+\beta_2 t+\beta_3 u|&\geq n_1(\theta)\bigg(\beta_1+\beta_2 n_2(\theta)+\beta_3(m^2n_3(\theta)+3m(k-1)n_2(\theta))\bigg)\\
       &\geq \nu_0(\beta_1+\beta_2\nu_1-m^2\beta_3+3m\beta_3(k-1)\nu_1)\\
       &\geq \frac{\pi}{2}.
\end{align*}
Clearly, $\phi(r,s,t,u;z)$ lies outside the disk $\{\delta \in \mathbb{C}: |\delta -1|<\pi/2\}$ which suffices to prove that $\phi(r,s,t,u;z)\notin \Omega_\rho$. Therefore, $\phi\in \Psi[\Omega_\rho,1+\sin z]$ and thus the result follows as an application of Lemma \ref{8 firsttheoremthirdorder}.
\end{proof}

%Concluding this section, we consider the function $h(z)$ to be $e^z$ as well as the function $1+\sin z$ itself, as demonstrated in the forthcoming results:

\begin{theorem}
Suppose $\beta_1$, $\beta_2$, $\beta_3>0$ and $\nu_0(\beta_1+\beta_2\nu_1-m^2\beta_3+3m\beta_3(k-1)\nu_1)\geq e-1$.  Let $p$ be analytic function in $\mathbb{D}$ with $p(0)=1$ and
\begin{equation*}
        1+\beta_1 zp'(z)+\beta_2 z^2p''(z)+\beta_3 z^3p'''(z)\prec e^z.
\end{equation*}
Then $p(z)\prec 1+\sin z$.
\end{theorem}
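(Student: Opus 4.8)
The plan is to replicate the structure of the second-order $e^z$ theorem, but now inside the third-order admissibility framework governed by Lemma \ref{8 firsttheoremthirdorder}. First I would set $h(z)=e^z$ and identify its image domain $\Omega=h(\mathbb{D})=\{w\in\mathbb{C}:|\log w|<1\}$, and then define the candidate admissible operator $\phi:\mathbb{C}^4\times\mathbb{D}\to\mathbb{C}$ by $\phi(r,s,t,u;z)=1+\beta_1 s+\beta_2 t+\beta_3 u$. To establish membership $\phi\in\Psi[\Omega,1+\sin z]$, it suffices to verify that $\phi(r,s,t,u;z)\notin\Omega$ whenever $r,s,t,u$ satisfy the admissibility data associated with $q(z)=1+\sin z$.

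The crux of the argument is a lower bound on $|\beta_1 s+\beta_2 t+\beta_3 u|$, which I would obtain by reproducing the estimate \eqref{10s 73} from the proof of Theorem \ref{10s sigmoid3}. Concretely, one factors out $\beta_1|s|$, passes from modulus to real part, substitutes the admissibility relations $|s|=m\,n_1(\theta)$ together with the bounds on $\RE(1+t/s)$ and $\RE(u/s)$, and then uses $m\geq 1$ and $\RE(1+n_2(\theta))>0$ to collapse the expression to its value at $\theta=0$. This produces
\begin{align*}
|\beta_1 s+\beta_2 t+\beta_3 u|&\geq n_1(\theta)\bigl(\beta_1+\beta_2 n_2(\theta)+\beta_3(m^2 n_3(\theta)+3m(k-1)n_2(\theta))\bigr)\\
&\geq \nu_0\bigl(\beta_1+\beta_2\nu_1-m^2\beta_3+3m\beta_3(k-1)\nu_1\bigr)\\
&\geq e-1,
\end{align*}
where the final inequality is exactly the hypothesis of the theorem.

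With this bound secured, I would write $|\log\phi(r,s,t,u;z)|=|\log(1+\beta_1 s+\beta_2 t+\beta_3 u)|$ and invoke Lemma \ref{prilemma4}: since $|\beta_1 s+\beta_2 t+\beta_3 u|\geq e-1$, the lemma yields $|\log(1+\beta_1 s+\beta_2 t+\beta_3 u)|\geq 1$, so $\phi(r,s,t,u;z)\notin\Omega$. Hence $\phi\in\Psi[\Omega,1+\sin z]$, and the desired conclusion $p(z)\prec 1+\sin z$ follows at once from Lemma \ref{8 firsttheoremthirdorder}.

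The only genuine subtlety — rather than a true obstacle — sits in the lower bound step, specifically in controlling the sign of the $\beta_3$ contribution. Because $n_3(\theta)$ attains its minimum $-1$ at $\theta=0$, the reduction to the $\theta=0$ value must be justified jointly with the minimization of $n_2(\theta)$, relying on $\RE(1+n_2(\theta))>0$ and $m\geq 1$ precisely as in Theorem \ref{10s sigmoid3}. Once that estimate is in hand, the passage through Lemma \ref{prilemma4} is identical in form to the second-order $e^z$ case, so no new difficulty arises.
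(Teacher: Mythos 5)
Your proposal is correct and follows essentially the same route as the paper: the same admissible operator $\phi(r,s,t,u;z)=1+\beta_1 s+\beta_2 t+\beta_3 u$, the same lower bound on $|\beta_1 s+\beta_2 t+\beta_3 u|$ reproduced from the estimate in Theorem \ref{10s sigmoid3}, the same appeal to Lemma \ref{prilemma4} to get $|\log(1+\beta_1 s+\beta_2 t+\beta_3 u)|\geq 1$, and the same conclusion via Lemma \ref{8 firsttheoremthirdorder}. The subtlety you flag about jointly minimizing the $n_2$ and $n_3$ terms at $\theta=0$ is handled in the paper at exactly the level of detail you describe, so nothing is missing.
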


\begin{proof}
Take $h(z)=e^z$ for $z\in \mathbb{D}$. Then, $h(\mathbb{D})=\{w \in\mathbb{C}:|\log w |<1\}=:\Omega$. Let $\phi:\mathbb{C}^4\times \mathbb{D}\rightarrow \mathbb{C}$ be defined as $\phi(r,s,t,u;z)=1+\beta_1 s+\beta_2 t+\beta_3 u$. For $\phi\in \Psi[\Omega, 1+\sin z]$, we must have $\phi(r,s,t,u;z)\notin \Omega$. So,
\begin{equation*}
        |\phi(r,s,t,u;z)-1|=|\beta_1 s+\beta_2 t+\beta_3 u|.
\end{equation*}
Proceeding on the similar lines of proof of Theorem \ref{10s sigmoid3}, we get
\begin{align}\label{10s 83}
       |\beta_1 s+\beta_2 t+\beta_3 u|&\geq n_1(\theta)\bigg(\beta_1+\beta_2 n_2(\theta)+\beta_3(m^2n_3(\theta)+3m(k-1)n_2(\theta))\bigg)\nonumber\\
       &\geq \nu_0(\beta_1+\beta_2\nu_1-m^2\beta_3+3m\beta_3(k-1)\nu_1)\nonumber\\
       &\geq e-1.
\end{align}
Next, we consider
\begin{equation*}
     |\log (\phi(r,s,t,u;z)|=|\log (1+\beta_1 s+\beta_2 t+\beta_3 u)|.
\end{equation*}
Through Lemma \ref{prilemma4} and \eqref{10s 83}, we have
\begin{equation*}
   |\log (1+\beta_1 s+\beta_2 t+\beta_3 u)|\geq 1, 
\end{equation*}
 which implies that $\phi(r,s,t,u;z)\notin \Omega$. Therefore, $\phi\in \Psi[\Omega,1+\sin z]$ and the result follows as a consequence of Lemma \ref{8 firsttheoremthirdorder}. 
\end{proof}

\begin{theorem}
Suppose $\beta_1$, $\beta_2$, $\beta_3>0$ and $\nu_0(\beta_1+\beta_2\nu_1-m^2\beta_3+3m\beta_3(k-1)\nu_1)\geq \sinh 1$.  Let $p$ be analytic function in $\mathbb{D}$ with $p(0)=1$ and
\begin{equation*}
        1+\beta_1 zp'(z)+\beta_2 z^2p''(z)+\beta_3 z^3p'''(z)\prec 1+\sin z.
\end{equation*}
Then $p(z)\prec 1+\sin z$.
\end{theorem}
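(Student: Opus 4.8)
The plan is to follow verbatim the template of the preceding third-order results, in particular Theorem \ref{10s sigmoid3}, changing only the target set to $\Omega_s$. First I would set $h(z)=1+\sin z$, so that $h(\mathbb{D})=\{w\in\mathbb{C}:|\arcsin(w-1)|<1\}=:\Omega_s$, and define the candidate admissible operator $\phi:\mathbb{C}^4\times\mathbb{D}\rightarrow\mathbb{C}$ by $\phi(r,s,t,u;z)=1+\beta_1 s+\beta_2 t+\beta_3 u$. By Lemma \ref{8 firsttheoremthirdorder}, the conclusion $p\prec 1+\sin z$ will follow once I verify that $\phi\in\Psi[\Omega_s,1+\sin z]$, i.e.\ that $\phi(r,s,t,u;z)\notin\Omega_s$ whenever $r,s,t,u$ meet the third-order admissibility constraints recorded before the theorems.

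The geometric ingredient is the containment result \cite[Lemma 3.3]{chosine}: the smallest disk centered at $1$ containing $\Omega_s$ is $\{\delta\in\mathbb{C}:|\delta-1|<\sinh 1\}$. Since $|\phi(r,s,t,u;z)-1|=|\beta_1 s+\beta_2 t+\beta_3 u|$, it suffices to show this modulus is at least $\sinh 1$, for then $\phi(r,s,t,u;z)$ lies outside that disk and hence outside $\Omega_s$. For the lower bound I would reuse the estimate chain of Theorem \ref{10s sigmoid3}: substituting $s=m\zeta q'(\zeta)$ and invoking $\RE(1+t/s)\geq m(1+n_2(\theta))$ together with $\RE(u/s)\geq m^2 n_3(\theta)+3m(k-1)n_2(\theta)$, then passing to real parts and using $m\geq 1$ and $\RE(1+n_2(\theta))>0$, yields
\[
|\beta_1 s+\beta_2 t+\beta_3 u|\geq n_1(\theta)\bigl(\beta_1+\beta_2 n_2(\theta)+\beta_3(m^2 n_3(\theta)+3m(k-1)n_2(\theta))\bigr).
\]
Applying the minimizations $n_1(\theta)\geq\nu_0$, $n_2(\theta)\geq\nu_1$ and $n_3(\theta)\geq-1$, all attained at $\theta=0$ per \eqref{10s nu0}, \eqref{10s nu1} and the discussion preceding the theorems, reduces the right-hand side to $\nu_0(\beta_1+\beta_2\nu_1-m^2\beta_3+3m\beta_3(k-1)\nu_1)$, which is $\geq\sinh 1$ by hypothesis.

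Combining these two observations gives $\phi(r,s,t,u;z)\notin\Omega_s$, hence $\phi\in\Psi[\Omega_s,1+\sin z]$, and Lemma \ref{8 firsttheoremthirdorder} delivers $p\prec 1+\sin z$. I do not expect a genuine obstacle, since the argument is structurally identical to the earlier third-order cases; the only step demanding care is the sign bookkeeping in the $n_3(\theta)\geq-1$ term, where the minimum of $n_3$ enters the final bound with a negative coefficient $-m^2\beta_3$, and one must confirm that the bracketed quantity remains nonnegative before the factor $n_1(\theta)\geq\nu_0$ can be pulled out---a positivity that the hypothesis $\nu_0(\beta_1+\beta_2\nu_1-m^2\beta_3+3m\beta_3(k-1)\nu_1)\geq\sinh 1>0$ guarantees.
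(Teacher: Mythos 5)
Your proposal matches the paper's own proof essentially verbatim: same admissible operator $\phi(r,s,t,u;z)=1+\beta_1 s+\beta_2 t+\beta_3 u$, same use of \cite[Lemma 3.3]{chosine} for the disk $\{\delta:|\delta-1|<\sinh 1\}$ containing $\Omega_s$, same estimate chain borrowed from Theorem \ref{10s sigmoid3}, and the same appeal to Lemma \ref{8 firsttheoremthirdorder}. Your closing remark about checking nonnegativity of the bracketed factor before applying $n_1(\theta)\geq\nu_0$ is a point of care the paper leaves implicit, but it does not change the argument.
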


\begin{proof}
Take $h(z)=1+\sin z$ for $z\in \mathbb{D}$. Then, $h(\mathbb{D})=\{w \in \mathbb{C}:|\arcsin (w -1)|<1\}=:\Omega_s$. Let $\phi:\mathbb{C}^4\times \mathbb{D}\rightarrow \mathbb{C}$ be defined as $\phi(r,s,t,u;z)=1+\beta_1 s+\beta_2 t+\beta_3 u$. For $\phi\in \Psi[\Omega_s, 1+\sin z]$, we must have $\phi(r,s,t,u;z)\notin \Omega_s$. Through \cite[Lemma 3.3]{chosine}, we note that the smallest disk containing $\Omega_s$ is $\{\delta \in \mathbb{C}: |\delta -1|<\sinh 1\}$. So,
\begin{equation*}
        |\phi(r,s,t,u;z)-1|=|\beta_1 s+\beta_2 t+\beta_3 u|.
\end{equation*}
Similar to the proof of Theorem \ref{10s sigmoid3}, we have
\begin{align*}
       |\beta_1 s+\beta_2 t+\beta_3 u|&\geq n_1(\theta)\bigg(\beta_1+\beta_2 n_2(\theta)+\beta_3(m^2n_3(\theta)+3m(k-1)n_2(\theta))\bigg)\\
       &\geq \nu_0(\beta_1+\beta_2\nu_1 -m^2\beta_3+3m\beta_3(k-1)\nu_1)\\
       &\geq \sinh 1.
\end{align*}
Clearly, $\phi(r,s,t,u;z)$ lies outside the disk $\{\delta \in \mathbb{C}: |\delta-1|<\sinh 1\}$ which is enough to conclude that $\phi(r,s,t,u;z)\notin \Omega_s$. Therefore, $\phi\in \Psi[\Omega_s,1+\sin z]$ and thus the result holds through Lemma \ref{8 firsttheoremthirdorder}.
\end{proof}

Now, we summarize this section by providing a combined conclusion. If $p(z)=zf'(z)/f(z)$ is considered in Theorems 2.18-2.23 as in Corollary \ref{10s corollary31}. We deduce the following
\begin{corollary}
Suppose $\beta_1$, $\beta_2$, $\beta_3>0$ and $f\in\mathcal{A}$. Then, $f\in \mathcal{S}^{*}_{s}$ if any of the conditions hold:
\begin{enumerate}[$(i)$]
\item $\Theta_{f}(z)\prec (1+Cz)/(1+Dz)$ and $\nu_0(\beta_1+\beta_2\nu_1-m^2\beta_3+3m\beta_3(k-1)\nu_1)(1-D^2)\geq (1+|D|)(C-D)$, where $-1< D<C\leq 1$
\item $\Theta_{f}(z)\prec 2/(1+e^{-z})$ and $\nu_0(\beta_1+\beta_2\nu_1-m^2\beta_3+3m\beta_3(k-1)\nu_1)\geq r_0$, where $r_0\approx 0.546302$ is the positive root of the equation $r^2+2 \cot (1)r-1=0$.
\item $\Theta_{f}(z)\prec z+\sqrt{1+z^2}$ and $\nu_0(\beta_1+\beta_2\nu_1-m^2\beta_3+3m\beta_3 (k-1)\nu_1)\geq \sqrt{2}$.
\item $\Theta_{f}(z)\prec 1+ze^z$ and $\nu_0(\beta_1+\beta_2\nu_1-m^2\beta_3+3m\beta_3(k-1)\nu_1)\geq e$.
\item $\Theta_{f}(z)\prec 1+\sinh^{-1}z$ and $2\nu_0(\beta_1+\beta_2\nu_1 -m^2\beta_3+3m\beta_3(k-1)\nu_1)\geq \pi$.
\item $\Theta_{f}(z)\prec e^z$ and $\nu_0(\beta_1+\beta_2\nu_1-m^2\beta_3+3m\beta_3(k-1)\nu_1)\geq e-1$.
\item $\Theta_{f}(z)\prec 1+\sin z$ and $\nu_0(\beta_1+\beta_2\nu_1-m^2\beta_3+3m\beta_3(k-1)\nu_1)\geq \sinh 1$.
\end{enumerate}
\end{corollary}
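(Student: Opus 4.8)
The plan is to recast the subordination hypothesis as an admissibility statement and then verify the conditions defining $\Psi[\Omega_s,1+\sin z]$, so that Lemma~\ref{8 firsttheoremthirdorder} applies directly. First I would set $h(z)=1+\sin z$, so that $\Omega_s:=h(\mathbb{D})=\{w\in\mathbb{C}:|\arcsin(w-1)|<1\}$, and define the operator $\phi:\mathbb{C}^4\times\mathbb{D}\to\mathbb{C}$ by $\phi(r,s,t,u;z)=1+\beta_1 s+\beta_2 t+\beta_3 u$. Since the target class and the dominant $q(z)=1+\sin z$ coincide in this case, the entire task reduces to showing $\phi(r,s,t,u;z)\notin\Omega_s$ whenever $r,s,t,u$ satisfy the third-order admissibility relations with $k\geq m\geq n\geq 2$.

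The key geometric input is \cite[Lemma 3.3]{chosine}, which identifies the smallest disk centered at $1$ containing $\Omega_s$ as $\{\delta\in\mathbb{C}:|\delta-1|<\sinh 1\}$. It therefore suffices to establish the single scalar estimate $|\phi(r,s,t,u;z)-1|=|\beta_1 s+\beta_2 t+\beta_3 u|\geq\sinh 1$, since then $\phi$ is forced to lie outside a disk that already engulfs $\Omega_s$. This is exactly the estimate performed in the proof of Theorem~\ref{10s sigmoid3}: writing $|\beta_1 s+\beta_2 t+\beta_3 u|=\beta_1|s|\,|1+(\beta_2/\beta_1)(t/s)+(\beta_3/\beta_1)(u/s)|\geq\beta_1|s|\,\RE(1+(\beta_2/\beta_1)(t/s)+(\beta_3/\beta_1)(u/s))$, substituting $|s|=mn_1(\theta)$, and feeding in the admissibility lower bounds $\RE(t/s)\geq m n_2(\theta)+m-1$ and $\RE(u/s)\geq m^2 n_3(\theta)+3m(k-1)n_2(\theta)$.

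The main work—and the only genuinely delicate point—is the reduction that turns this $\theta$-, $m$-, and $k$-dependent bound into the clean, parameter-only hypothesis of the theorem. Following Theorem~\ref{10s sigmoid3}, one uses $m\geq 1$ to absorb the surplus powers of $m$ and then replaces $n_1(\theta)$, $n_2(\theta)$, $n_3(\theta)$ by their minima $\nu_0$, $\nu_1$, and $-1$ respectively, arriving at
\begin{align*}
|\beta_1 s+\beta_2 t+\beta_3 u|&\geq n_1(\theta)\bigl(\beta_1+\beta_2 n_2(\theta)+\beta_3(m^2 n_3(\theta)+3m(k-1)n_2(\theta))\bigr)\\
&\geq\nu_0\bigl(\beta_1+\beta_2\nu_1-m^2\beta_3+3m\beta_3(k-1)\nu_1\bigr)\\
&\geq\sinh 1,
\end{align*}
the final line being precisely the standing hypothesis. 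The care required here is the sign bookkeeping: because $\nu_1<0$ and $n_3$ contributes a negative minimum, one must verify that each substitution moves the bracket in the decreasing direction so that the chain of inequalities points the right way. Granting this estimate, $\phi(r,s,t,u;z)$ lies outside the disk $\{\delta:|\delta-1|<\sinh 1\}\supseteq\Omega_s$, whence $\phi(r,s,t,u;z)\notin\Omega_s$ and $\phi\in\Psi[\Omega_s,1+\sin z]$. The conclusion $p(z)\prec 1+\sin z$ then follows immediately from Lemma~\ref{8 firsttheoremthirdorder}.
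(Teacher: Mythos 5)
There is a genuine gap: the corollary has seven parts, one for each choice of $h(z)$, and your argument only addresses item $(vii)$, where $h(z)=1+\sin z$. The other six cases each require a different geometric description of $\Omega=h(\mathbb{D})$ and a correspondingly different exclusion argument: the Janowski case needs the disk $\left|w-\tfrac{1-CD}{1-D^2}\right|<\tfrac{C-D}{1-D^2}$ and a triangle-inequality estimate against its center; the sigmoid and exponential cases are not handled by a ``smallest containing disk'' at all but by the logarithmic criteria of Lemma~\ref{prilemma42} and Lemma~\ref{prilemma4}; the crescent case uses the two circles $C_1$ and $C_2$; and the cardioid and petal cases use the smallest enclosing disks of radii $e$ and $\pi/2$ from \cite[Lemma 3.3]{kumar-ganganiaCardioid-2021} and \cite[Remark 2.7]{kush}. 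None of these appear in your proposal, so six of the seven claims are unproved.

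Moreover, even for the one case you treat, you have essentially re-proved the underlying theorem (the paper's Theorem~2.23) about an abstract analytic $p$ with $p(0)=1$, rather than deduced the corollary. The actual content of the corollary is the specialization $p(z)=zf'(z)/f(z)$: one must check that $p$ is analytic with $p(0)=1$ for $f\in\mathcal{A}$, that the differential expression $1+\beta_1 zp'(z)+\beta_2 z^2p''(z)+\beta_3 z^3p'''(z)$ equals $\Theta_f(z)$ as written in \eqref{10s corollary31formula} in terms of $S_1,\dots,S_4$, and that the conclusion $p\prec 1+\sin z$ translates into $f\in\mathcal{S}^*_s$. The paper's proof of the corollary consists precisely of this substitution applied to Theorems~2.18--2.23 (the computation of $\Theta_f$ having been done in Corollary~\ref{10s corollary31}); your proposal never makes that connection, and its final sentence still concludes $p(z)\prec 1+\sin z$ for an unspecified $p$ rather than membership of $f$ in $\mathcal{S}^*_s$.
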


\section{The Class $\mathcal{S}^{*}_{\rho}$}
%\section{About the clas $\mathcal{S}^*_s$}
%\subsection{The admissibility condition}
%\subsection{The admissibility condition for $q(z)=1+\sinh^{-1}z$}

\noindent Let us consider the function $q(z):=1+\sinh^{-1}z$ and define the admissibility class $\Psi[\Omega,q]$, where $\Omega\subset \mathbb{C}$. We know that $q$ is analytic and univalent on $\overline{\mathbb{D}}$, $q(0)=1$ and it maps $\mathbb{D}$ onto the domain $\Omega_{\rho}:=\{w \in \mathbb{C}:|\sinh (w -1)|<1\}$. Since $\mathbb{E}(q)=\phi$, for $\zeta\in \partial \mathbb{D}\setminus \mathbb{E}(q)$ if and only if $\zeta=e^{i\theta}$ for $\theta\in [0,2\pi]$. Now, consider the function
\begin{equation}
    |q'(\zeta)|=\frac{1}{\sqrt{2}(\cos \theta)^{1/2}}=:n_4(\theta),\label{10 1}
\end{equation}
which has a minimum value of $1/\sqrt{2}$. It is evident that $\min |q'(\zeta)|>0$, which implies that $q\in Q(1)$, and consequently, the admissibility class $\Psi[\Omega,q]$ is well-defined. While considering $|\zeta|=1$, we observe that $q(\zeta)\in q(\partial \mathbb{D})=\partial \Omega_{\rho} = \{w  \in \mathbb{C}: |\sinh(w -1)| = 1\}$. Consequently, we have $|\log q(\zeta)| = 1$ and $\sinh (q(\zeta)-1) = e^{i\theta}$ ($\theta\in[0,2\pi]$), implies that $q(\zeta) = 1+\sinh^{-1}\zeta$. Moreover, $\zeta q'(\zeta)=e^{i\theta}/(1+e^{2i\theta})^{1/2}$ and 
\begin{equation}
    \frac{\zeta q''(\zeta)}{q'(\zeta)}=-\frac{e^{2i\theta}}{1+e^{2i\theta}}.\label{10 2}
\end{equation}
By comparing the real parts on both sides of \eqref{10 2}, we obtain
\begin{equation}
    \RE\bigg(\frac{\zeta q''(\zeta)}{q'(\zeta)}\bigg)=-\frac{1}{2}=:n_5(\theta).\label{10 3}
\end{equation}
The function $n_5(\theta)$ defined in \eqref{10 3} has minimum value $-1/2$ being a constant function. Moreover, the class $\Psi[\Omega, 1+\sinh^{-1}z]$ is precisely defined as the class of all functions $\phi: \mathbb{C}^3 \times \mathbb{D} \rightarrow \mathbb{C}$ that satisfy the following conditions:

\begin{equation*}
    \phi(r,s,t;z)\notin \Omega\quad \text{for}\quad z\in \mathbb{D},\quad \theta\in[0,2\pi] \quad\text{and}\quad m\geq 1,
\end{equation*}
whenever
\begin{equation}
    r=q(\zeta)=1+\sinh^{-1}(e^{i\theta});\quad s=m\zeta q'(\zeta)=\frac{me^{i\theta}}{\sqrt{1+e^{2i\theta}}};\quad \RE\bigg(1+\frac{t}{s}\bigg)\geq m(1+n_5(\theta)). \label{10 4}
\end{equation}
If $\phi:\mathbb{C}^2\times \mathbb{D}\rightarrow \mathbb{C}$, then the admissibility condition \eqref{10 4} becomes 
\begin{equation*}
\phi\bigg(1+\sinh^{-1}(e^{i\theta}),\frac{me^{i\theta}}{\sqrt{1+e^{2i\theta}}};z\bigg)\notin \Omega\quad (z\in \mathbb{D}, \theta\in[0,2\pi], m\geq 1).
\end{equation*}

\begin{comment}
Taking $q(z)=1+\sinh^{-1}z$, we deduce the following as a special case of Theorem \ref{10 millertheorem}, which is necessary to prove our upcoming results.
\begin{lemma}\label{10 theorem1}
Suppose that $p\in \mathcal{H}[1,n]$. 
\begin{enumerate}
\item If $\phi\in \Psi[\Omega, 1+\sinh^{-1} z]$ then
\begin{equation*}
        \phi(p(z),zp'(z),z^2p''(z);z)\in \Omega \implies p(z)\prec 1+\sinh^{-1} z.
\end{equation*}
\item If $\phi\in \Psi[h(\mathbb{D}), 1+\sinh^{-1} z]$ then
\begin{equation*}
        \phi(p(z),zp'(z),z^2p''(z);z)\prec h(z) \implies p(z)\prec 1+\sinh^{-1} z.
\end{equation*}
\end{enumerate}
\end{lemma}
\end{comment}

%\subsection{Second order differential subordination}
Now, we discuss the implications of the following second-order differential subordination 
\begin{equation*}
    1+\beta_1 zp'(z)+\beta_2 z^2p''(z)\prec h(z) \implies p(z)\prec 1+\sinh^{-1}z,
\end{equation*}
by finding sufficient conditions on the parameters $\beta_1$ and $\beta_2$ for various choices of $h(z)$ including $1+\sinh^{-1}z$ itself. We begin by considering $h(z)=\sqrt{1+z}$ in the following manner:
%and $(1+Cz)/(1+Dz)$ 

\begin{theorem}\label{10 secondorder1}
Suppose $\beta_1$, $\beta_2>0$ and $(2{\beta_1}- \beta_2)(2\beta_1-\beta_2-4\sqrt{2})\geq 8$. Let $p$ be analytic function in $\mathbb{D}$ with $p(0)=1$ and
\begin{equation*}
      1+\beta_1 zp'(z)+\beta_2 z^2 p''(z)\prec \sqrt{1+z}.
\end{equation*}
Then $p(z)\prec 1+\sinh^{-1}z$.
\end{theorem}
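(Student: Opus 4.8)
The plan is to mirror the argument used for Theorem \ref{10s secondorder1}, replacing the data of $q(z)=1+\sin z$ by that of $q(z)=1+\sinh^{-1}z$ recorded in \eqref{10 1} and \eqref{10 3}. First I would set $h(z)=\sqrt{1+z}$, so that $\Omega:=h(\mathbb{D})=\{w\in\mathbb{C}:|w^2-1|<1\}$, and define the operator $\phi(r,s,t;z)=1+\beta_1 s+\beta_2 t$. By Lemma \ref{10 millertheorem} it suffices to verify the admissibility $\phi\in\Psi[\Omega,1+\sinh^{-1}z]$, that is, $\phi(r,s,t;z)\notin\Omega$ whenever $r,s,t$ satisfy the admissibility relations \eqref{10 4}. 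Since membership in $\Omega$ is governed by $|w^2-1|<1$, the goal reduces to showing $|(\phi(r,s,t;z))^2-1|\geq 1$.

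Next I would estimate, exactly as in Theorem \ref{10s secondorder1},
\[
|(\phi(r,s,t;z))^2-1|=|(1+\beta_1 s+\beta_2 t)^2-1|\geq |\beta_1 s+\beta_2 t|\,(|\beta_1 s+\beta_2 t|-2),
\]
and bound $|\beta_1 s+\beta_2 t|$ from below by $|\beta_1 s|\,\RE\bigl(1+(\beta_2/\beta_1)(t/s)\bigr)$. Here the admissibility data enter: $|s|=m\,n_4(\theta)$ and $\RE(1+t/s)\geq m(1+n_5(\theta))=m/2$, whence $\RE(t/s)\geq m/2-1$ and
\[
|\beta_1 s+\beta_2 t|\geq m\,n_4(\theta)\Bigl(\beta_1+\beta_2\bigl(\tfrac{m}{2}-1\bigr)\Bigr).
\]
The simplification relative to Section 2 is that here $n_5(\theta)\equiv-1/2$ is constant, so only the minimum $n_4(\theta)\geq 1/\sqrt{2}$ (attained at $\theta=0$) is required.

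The key step is the minimization over the free parameters $m$ and $\theta$. I would observe that $g(m):=m\bigl(\beta_1+\beta_2(m/2-1)\bigr)$ is, for $\beta_1,\beta_2>0$, increasing on $[1,\infty)$ (its critical point $1-\beta_1/\beta_2$ lies below $1$), so $g(m)\geq g(1)=\beta_1-\beta_2/2=(2\beta_1-\beta_2)/2$; together with $n_4(\theta)\geq 1/\sqrt{2}$ this yields $|\beta_1 s+\beta_2 t|\geq (2\beta_1-\beta_2)/(2\sqrt{2})=:L$. Then, since $t\mapsto t(t-2)$ is increasing for $t\geq 1$, I obtain
\[
|(\phi(r,s,t;z))^2-1|\geq L(L-2)=\frac{(2\beta_1-\beta_2)(2\beta_1-\beta_2-4\sqrt{2})}{8}\geq 1,
\]
which is precisely the hypothesis. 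Thus $\phi(r,s,t;z)\notin\Omega$, so $\phi\in\Psi[\Omega,1+\sinh^{-1}z]$, and Lemma \ref{10 millertheorem} delivers $p(z)\prec 1+\sinh^{-1}z$.

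The main obstacle I anticipate is the non-monotone factor $L(L-2)$ and the justification that the minimizing configuration is genuinely $(m,\theta)=(1,0)$. One must confirm that the hypothesis places us in the regime $2\beta_1-\beta_2\geq 2\sqrt{2}+4$, where $L\geq 1+\sqrt{2}>2$, so that both the reverse-triangle step $|w^2+2w|\geq|w|(|w|-2)$ and the passage $L(L-2)\geq$ (its value at $L$) are valid. The delicate point is the sign of $2\beta_1-\beta_2$: if it were negative the lower bound $g(1)=(2\beta_1-\beta_2)/2$ would be vacuous and the chain would collapse, so verifying positivity of $2\beta_1-\beta_2$ under the stated hypothesis, combined with $\beta_1,\beta_2>0$, is the step that must be checked with care.
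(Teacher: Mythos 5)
Your proof follows the paper's argument essentially verbatim: the same operator $\phi(r,s,t;z)=1+\beta_1 s+\beta_2 t$, the same reverse-triangle estimate $|(\phi)^2-1|\geq|\beta_1 s+\beta_2 t|(|\beta_1 s+\beta_2 t|-2)$, and the same reduction via $m\geq 1$, $n_4(\theta)\geq 1/\sqrt{2}$ and $n_5\equiv -1/2$ to $L(L-2)\geq 1$ with $L=(2\beta_1-\beta_2)/(2\sqrt{2})$, followed by Lemma \ref{10 millertheorem}. The sign caveat you flag at the end is genuine — the stated hypothesis alone still admits $2\beta_1-\beta_2\leq 2\sqrt{2}-4<0$, where the chain of lower bounds becomes vacuous — but the paper's own proof silently makes the same implicit assumption that $2\beta_1>\beta_2$, so your write-up is, if anything, the more careful of the two.
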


\begin{proof}
Consider $h(z)=\sqrt{1+z}$ for $z\in \mathbb{D}$. Then, $h(\mathbb{D})=\{w \in \mathbb{C}:|w ^2-1|<1\}=:\Omega$. Let $\phi:\mathbb{C}^3\times \mathbb{D}\rightarrow \mathbb{C}$ be defined as $\phi(r,s,t;z)=1+\beta_1 s+\beta_2 t$. It is clear that $\phi\in \Psi[\Omega,1+\sinh^{-1} z]$ provided $\phi(r,s,t,;z)\notin \Omega$ for $z\in \mathbb{D}$. Note that
\begin{align*}
        |(\phi(r,s,t;z))^2-1|&=|(1+\beta_1 s+\beta_2 t)^2-1|\\
        &\geq |\beta_1 s+\beta_2 t|(|\beta_1 s+\beta_2 t|-2)\\
        &\geq |\beta_1 s|\RE\bigg(1+\frac{\beta_2}{\beta_1}\frac{t}{s}\bigg)\bigg(|\beta_1 s|\RE\bigg(1+\frac{\beta_2}{\beta_1}\frac{t}{s}\bigg)-2\bigg)\\
        &\geq m\beta_1 n_4(\theta)\RE\bigg(1+\frac{\beta_2}{\beta_1}(mn_5(\theta)+m-1)\bigg)\bigg(m\beta_1 n_4(\theta)\RE\bigg(1+\\
        &\quad \frac{\beta_2}{\beta_1}(mn_5(\theta)+m-1)\bigg)-2\bigg).
\end{align*}
Here $n_4(\theta)$ and $n_5(\theta)$ are given in \eqref{10 1} and \eqref{10 3}. Since $m\geq 1$, therefore
\begin{align*}
       |(\phi(r,s,t;z))^2-1|&\geq \beta_1 n_4(\theta)\RE\bigg(1+\frac{\beta_2}{\beta_1}n_5(\theta)\bigg)\bigg(\beta_1 n_4(\theta)\RE\bigg(1+\frac{\beta_2}{\beta_1}n_5(\theta)\bigg)-2\bigg)\\
       &\geq \frac{1}{\sqrt{2}}\bigg(\beta_1-\frac{\beta_2}{2}\bigg) \bigg(\frac{1}{\sqrt{2}}\bigg(\beta_1-\frac{\beta_2}{2}\bigg)-2\bigg)\\
       &=\frac{(2\beta_1-\beta_2)(2\beta_1-\beta_2-4\sqrt{2})}{8}\\
       &\geq 1.
\end{align*}
Therefore, $\phi(r,s,t;z)\notin \Omega$ and hence $\phi\in \Psi[\Omega,1+\sinh^{-1} z]$. Thus, result follows as an application of Lemma \ref{10 millertheorem}.
\end{proof}

\begin{comment}
\begin{corollary}
By considering $p(z)=zf'(z)/f(z)$ in above theorem and from Corollary \ref{10s corollaryfirst}, we deduce that $f\in \mathcal{S}^{*}_{\rho}$ provided $S_{f}(z)\prec \sqrt{1+z}$ and $(2{\beta_1}- \beta)(2\beta_1-\beta-4\sqrt{2})\geq 8$.
\end{corollary}
\end{comment}

\begin{theorem}
Suppose $\beta_1$, $\beta_2>0$ and $-1< D<C\leq 1$ with $(2\beta_1 -\beta_2)(1-D^2)\geq 2\sqrt{2}(C-D)(1+|D|)$. Let $p$ be analytic function in $\mathbb{D}$ with $p(0)=1$ and
\begin{equation*}
        1+\beta_1 zp'(z)+\beta_2 z^2p''(z)\prec \frac{1+Cz}{1+Dz}.
\end{equation*}
Then $p(z)\prec 1+\sinh^{-1}z$.
\end{theorem}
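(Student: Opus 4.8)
The plan is to run the admissibility machinery of Lemma \ref{10 millertheorem} with $q(z)=1+\sinh^{-1}z$, exactly along the lines of Theorem \ref{10 secondorder1}; only the target domain $\Omega$ changes. First I would set $h(z)=(1+Cz)/(1+Dz)$ and record that $h$ maps $\mathbb{D}$ onto the disk
\begin{equation*}
\Omega=\Big\{w\in\mathbb{C}:\Big|w-\tfrac{1-CD}{1-D^2}\Big|<\tfrac{C-D}{1-D^2}\Big\},
\end{equation*}
and define the admissible operator $\phi(r,s,t;z)=1+\beta_1 s+\beta_2 t$. By Lemma \ref{10 millertheorem} it suffices to show $\phi\in\Psi[\Omega,1+\sinh^{-1}z]$, that is, that $\phi(r,s,t;z)\notin\Omega$ whenever $(r,s,t)$ meets the admissibility conditions \eqref{10 4}.

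The heart of the argument is a lower bound for the distance of $\phi$ from the centre of $\Omega$. Since $1-(1-CD)/(1-D^2)=D(C-D)/(1-D^2)$, the triangle inequality gives
\begin{equation*}
\Big|\phi(r,s,t;z)-\tfrac{1-CD}{1-D^2}\Big|\geq |\beta_1 s+\beta_2 t|-\tfrac{|D|(C-D)}{1-D^2}.
\end{equation*}
Next I would bound $|\beta_1 s+\beta_2 t|$ from below precisely as in Theorem \ref{10 secondorder1}: using $|w|\geq\RE w$, the admissibility relation $\RE(1+t/s)\geq m(1+n_5(\theta))$, and $|s|=m\,n_4(\theta)$, one reaches
\begin{equation*}
|\beta_1 s+\beta_2 t|\geq m\beta_1 n_4(\theta)\RE\Big(1+\tfrac{\beta_2}{\beta_1}(mn_5(\theta)+m-1)\Big).
\end{equation*}
Because $n_5\equiv-1/2$ is constant and $n_4(\theta)\geq 1/\sqrt{2}$, and since $m\geq 1$, this collapses to the clean bound $|\beta_1 s+\beta_2 t|\geq (1/\sqrt{2})(\beta_1-\beta_2/2)=(2\beta_1-\beta_2)/(2\sqrt{2})$.

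Finally I would combine the two displays and invoke the standing hypothesis. The desired conclusion $|\phi-(1-CD)/(1-D^2)|\geq (C-D)/(1-D^2)$ is, after clearing denominators, equivalent to $(2\beta_1-\beta_2)(1-D^2)\geq 2\sqrt{2}(C-D)(1+|D|)$, which is exactly the assumption; hence $\phi\notin\Omega$, so $\phi\in\Psi[\Omega,1+\sinh^{-1}z]$, and Lemma \ref{10 millertheorem} delivers $p\prec 1+\sinh^{-1}z$.

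The step needing genuine care is the monotonicity used to pass from $n_4(\theta)$ to its minimum $1/\sqrt{2}$: this preserves the inequality only when the factor $\beta_1-\beta_2/2$ is nonnegative. This is not an extra assumption but is forced by the main hypothesis, since $-1<D<C\leq 1$ makes both $1-D^2>0$ and $(C-D)(1+|D|)>0$, whence $2\beta_1-\beta_2>0$. I would flag this positivity explicitly before suppressing the $m$-dependence, since it is the only place the estimate could fail were the parameters left unconstrained.
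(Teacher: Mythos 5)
Your proposal is correct and follows essentially the same route as the paper's own proof: the same admissibility operator $\phi(r,s,t;z)=1+\beta_1 s+\beta_2 t$, the same triangle-inequality reduction using $1-\tfrac{1-CD}{1-D^2}=\tfrac{D(C-D)}{1-D^2}$, the same lower bound $|\beta_1 s+\beta_2 t|\geq\tfrac{1}{\sqrt{2}}\big(\beta_1-\tfrac{\beta_2}{2}\big)$ via $n_4$, $n_5$ and $m\geq 1$, and the same appeal to Lemma \ref{10 millertheorem}. Your closing observation that $2\beta_1-\beta_2>0$ is forced by the hypothesis is a point the paper leaves implicit, and it is worth flagging.
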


\begin{proof}
Assume $h(z)=(1+Cz)/(1+Dz)$ for $z\in \mathbb{D}$. Then, $h(\mathbb{D})=\{w \in \mathbb{C}:|w -(1-CD)/(1-D^2)|<(C-D)/(1-D^2)\}=:\Omega$. 
%\begin{equation*}
       % h(\mathbb{D})=\Omega=\bigg\{w \in \mathbb{C}:\bigg|w -\frac{1-CD}{1-D^2}\bigg|<\frac{C-D}{1-D^2}\bigg\}.
%\end{equation*}
Let $\phi:\mathbb{C}^3\times \mathbb{D}\rightarrow \mathbb{C}$ be defined as $\phi(r,s,t;z)=1+\beta_1 s+\beta_2 t$. It is clear that $\phi\in \Psi[\Omega,1+\sinh^{-1} z]$ if $\phi(r,s,t;z)\notin \Omega$ for $z\in \mathbb{D}$. Consider
\begin{align*}
        \bigg|\phi(r,s,t;z)-\frac{1-CD}{1-D^2}\bigg|&=\bigg|1+\beta_1 s+\beta t-\frac{1-CD}{1-D^2}\bigg|\\
        &\geq |\beta_1 s+\beta_2 t|-\frac{|D|(C-D)}{1-D^2}\\
        &\geq |\beta_1 s|\RE\bigg(1+\frac{\beta_2}{\beta_1}\frac{t}{s}\bigg)-\frac{|D|(C-D)}{1-D^2}\\
        &\geq m\beta_1 n_4(\theta)\RE\bigg(1+\frac{\beta_2}{\beta_1}(mn_5(\theta)+m-1)\bigg)-\frac{|D|(C-D)}{1-D^2}.
\end{align*}
Using the fact that $m\geq 1$ and proceeding on the similar lines of the proof of Theorem \ref{10 secondorder1}, we have
\begin{align*}
   \bigg|\phi(r,s,t;z)-\frac{1-CD}{1-D^2}\bigg|%&\geq  \beta_1 n_4(\theta)\RE\bigg(1+\frac{\beta_2}{\beta_1}n_5(\theta)\bigg)-\frac{|D|(C-D)}{1-D^2}\\
   &\geq\frac{1}{\sqrt{2}}\bigg(\beta_1-\frac{\beta_2}{2}\bigg)-\frac{|D|(C-D)}{1-D^2}\\
   &\geq \frac{C-D}{1-D^2}.
\end{align*}
Thus, $\phi\in \Psi[\Omega,1+\sinh^{-1} z]$ and the result follows as an application of Lemma \ref{10 millertheorem}.
\end{proof}

\begin{comment}
Upon examining the theorem with the consideration of $p(z) = zf'(z)/f(z)$, the ensuing outcome can be expressed as follows:
\begin{corollary}
Suppose $\beta_1$, $\beta_2>0$ and $f\in\mathcal{A}$. Then, $f\in \mathcal{S}^{*}_{\rho}$ provided $S_{f}(z)\prec (1+Cz)/(1+Dz)$ and $(2\beta_1 -\beta_2)(1-D^2)\geq 2\sqrt{2}(C-D)(1+|D|)$ whenever $-1<D<C\leq 1$ through \eqref{10s corollary21}.
    
\end{corollary}
\end{comment}

%Next, we suppose $h(z)$ as $2/(1+e^{-z})$ and $z+\sqrt{1+z^2}$ and proceed with the results as follows:

\begin{theorem}\label{10 thmsigmoid}
Suppose $\beta_1$, $\beta_2>0$ and $2\beta_1-\beta_2\geq 2\sqrt{2} r_0$, where $r_0\approx 0.546302$ is the positive root of the equation $r^2+2 \cot (1)r-1=0$. Let $p$ be analytic function in $\mathbb{D}$ with $p(0)=1$ and
\begin{equation*}
1+\beta_1 zp'(z)+\beta_2 z^2p''(z)\prec \frac{2}{1+e^{-z}}.        
\end{equation*}
Then $p(z)\prec 1+\sinh^{-1}z$.
\end{theorem}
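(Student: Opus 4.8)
The plan is to deploy the admissibility-operator machinery exactly as in the $\mathcal{S}^*_s$ sigmoid result (Theorem~\ref{10s thmsigmoid}) and the $\sqrt{1+z}$ computation of Theorem~\ref{10 secondorder1}, now reading off the boundary data from $q(z)=1+\sinh^{-1}z$ recorded in \eqref{10 1} and \eqref{10 3}. Concretely, I would set $h(z)=2/(1+e^{-z})$, identify $\Omega=h(\mathbb{D})=\{w\in\mathbb{C}:|\log(w/(2-w))|<1\}$, and take $\phi(r,s,t;z)=1+\beta_1 s+\beta_2 t$. By Lemma~\ref{10 millertheorem} the conclusion follows once I verify $\phi\in\Psi[\Omega,1+\sinh^{-1}z]$, which amounts to the single admissibility check $\phi(r,s,t;z)\notin\Omega$ for all $r,s,t$ satisfying the constraints in \eqref{10 4}.

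The heart of the argument is a lower bound for $X:=\beta_1 s+\beta_2 t$. Using $|s|=m\,n_4(\theta)$ from \eqref{10 4}, the admissibility inequality $\RE(t/s)\geq m\,n_5(\theta)+m-1$, and $|X|\geq|\beta_1 s|\,\RE\!\big(1+(\beta_2/\beta_1)(t/s)\big)$, I arrive at
\[
|X|\;\geq\; n_4(\theta)\Big(m\beta_1+\beta_2 m\big(m\,n_5(\theta)+m-1\big)\Big).
\]
Because $n_5(\theta)\equiv-1/2$ is constant, the bracketed term is a quadratic in $m$ whose derivative equals $\beta_1+\beta_2(m-1)>0$ for $m\geq1$; hence it is increasing and minimized at $m=1$, where it collapses to $\beta_1-\beta_2/2$. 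Combining this with the minimum $n_4(\theta)\geq1/\sqrt{2}$ gives $|X|\geq(2\beta_1-\beta_2)/(2\sqrt{2})\geq r_0$, the last step being precisely the hypothesis $2\beta_1-\beta_2\geq2\sqrt{2}\,r_0$.

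To close, I would recast the membership test: since $\phi/(2-\phi)=(1+X)/(1-X)$, the condition $\phi\in\Omega$ reads $|\log((1+X)/(1-X))|<1$. Having established $|X|\geq r_0$, Lemma~\ref{prilemma42} yields $|\log((1+X)/(1-X))|\geq1$, so $\phi(r,s,t;z)\notin\Omega$ and therefore $\phi\in\Psi[\Omega,1+\sinh^{-1}z]$; Lemma~\ref{10 millertheorem} then delivers $p\prec 1+\sinh^{-1}z$. The only step that is not purely mechanical is the reduction from an arbitrary $m\geq1$ to a condition phrased solely in $\beta_1,\beta_2$: the main care lies in checking the monotonicity in $m$ and confirming that the modulus-versus-real-part inequality does not lose the bound $r_0$. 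Once that is secured, the known geometry of $\Omega$ together with Lemma~\ref{prilemma42} finishes the proof.
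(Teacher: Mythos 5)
Your proposal is correct and follows essentially the same route as the paper's own proof: the same admissible operator $\phi(r,s,t;z)=1+\beta_1 s+\beta_2 t$, the same lower bound $|\beta_1 s+\beta_2 t|\geq n_4(\theta)(\beta_1+\beta_2 n_5(\theta))\geq \tfrac{1}{\sqrt{2}}(\beta_1-\tfrac{1}{2}\beta_2)\geq r_0$, and the same appeal to Lemma \ref{prilemma42} followed by Lemma \ref{10 millertheorem}. Your explicit monotonicity-in-$m$ check is a welcome justification of the step the paper compresses into ``As $m\geq 1$, we obtain,'' but it does not change the argument.
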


\begin{proof}
Suppose $h(z)=2/(1+e^{-z})$ for $z\in \mathbb{D}$. Then, $h(\mathbb{D})=\{w \in \mathbb{C}:|\log (w /(2-w ))|<1\}=:\Omega$. Let $\phi:\mathbb{C}^3\times \mathbb{D}\rightarrow \mathbb{C}$ be defined as $\phi(r,s,t;z)=1+\beta_1 s+\beta_2 t$. We know that $\phi\in \Psi[\Omega, 1+\sinh^{-1} z]$ only when $\phi(r,s,t;z)\notin \Omega$. First we consider,
\begin{align*}
    |\beta_1 s+\beta_2 t|&=\beta_1 |s|\bigg|1+\frac{\beta_2}{\beta_1}\frac{t}{s}\bigg|\\
    &\geq \beta_1 |s|\RE\bigg(1+\frac{\beta_2}{\beta_1}\frac{t}{s}\bigg)\\
    &\geq m \beta_1 n_4(\theta)\bigg(1+\frac{\beta_2}{\beta_1}(mn_5(\theta)+m-1)\bigg).
\end{align*}
As $m\geq 1$, we obtain
\begin{align}\label{10 7}
    |\beta_1 s+\beta_2 t|&\geq n_4(\theta)(\beta_1+\beta_2 n_5(\theta))\nonumber\\
    &\geq \frac{1}{\sqrt{2}}\bigg(\beta_1-\frac{1}{2}\beta_2\bigg)\nonumber\\
    &\geq r_0.
\end{align}
Now, we consider
\begin{equation*}
    \bigg|\log \bigg(\frac{\phi(r,s,t;z)}{2-\phi(r,s,t;z)}\bigg)\bigg|=\bigg|\log\bigg(\frac{1+\beta_1 s+\beta_2 t}{1-(\beta_1 s+\beta_2 t)}\bigg)\bigg|.
\end{equation*}
Through Lemma \ref{prilemma42} and \eqref{10 7}, we have
\begin{equation*}
    \bigg|\log \bigg(\frac{1+\beta_1 s+\beta_2 t}{1-(\beta_1 s+\beta_2 t)}\bigg)\bigg|\geq 1,
\end{equation*}
which implies that $\phi\in \Psi[\Omega,1+\sinh^{-1} z]$. Therefore, result follows as an application of Lemma \ref{10 millertheorem}.
\end{proof}

\begin{theorem}
Suppose $\beta_1$, $\beta_2>0$ and $2\beta_1-\beta_2\geq 4$. Let $p$ be analytic function in $\mathbb{D}$ with $p(0)=1$ and
\begin{equation*}
        1+\beta_1 zp'(z)+\beta_2 z^2p''(z)\prec z+\sqrt{1+z^2}.
\end{equation*}
Then $p(z)\prec 1+\sinh^{-1}z$.
\end{theorem}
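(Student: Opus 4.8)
The plan is to follow the admissibility framework of Lemma \ref{10 millertheorem}, mirroring the structure of Theorem \ref{10 thmsigmoid} but replacing the target region with the lune-shaped domain attached to $z+\sqrt{1+z^2}$. First I would set $h(z)=z+\sqrt{1+z^2}$ and record its image $\Omega:=h(\mathbb{D})=\{w\in\mathbb{C}:|w^2-1|<2|w|\}$, then define $\phi(r,s,t;z)=1+\beta_1 s+\beta_2 t$. By Lemma \ref{10 millertheorem} it suffices to check that $\phi\in\Psi[\Omega,1+\sinh^{-1}z]$, i.e.\ that $\phi(r,s,t;z)\notin\Omega$ for every $r,s,t$ meeting the admissibility data \eqref{10 4} with $q(z)=1+\sinh^{-1}z$.

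The geometric reduction, already used for this same $h$ in Section 2 (see Fig.\ \ref{crescent10}), is that $\Omega$ is the lune bounded by the circles $C_1:|w-1|=\sqrt{2}$ and $C_2:|w+1|=\sqrt{2}$, and in particular $\Omega$ is contained in the open disk enclosed by $C_1$. Hence it is enough to show $\phi(r,s,t;z)$ lies outside $C_1$, namely that $|\phi(r,s,t;z)-1|=|\beta_1 s+\beta_2 t|\geq\sqrt{2}$. To produce this bound I would estimate $|\beta_1 s+\beta_2 t|\geq\beta_1|s|\RE(1+(\beta_2/\beta_1)(t/s))$, insert $|s|=mn_4(\theta)$ from \eqref{10 1} and $\RE(t/s)\geq mn_5(\theta)+m-1$ coming from the admissibility inequality together with $n_5(\theta)=-1/2$ in \eqref{10 3}, and then collapse the result to $\tfrac{1}{\sqrt{2}}(\beta_1-\tfrac12\beta_2)=\tfrac{2\beta_1-\beta_2}{2\sqrt{2}}$. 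The hypothesis $2\beta_1-\beta_2\geq4$ is precisely what makes this quantity $\geq\sqrt{2}$, placing $\phi(r,s,t;z)$ outside $C_1$ and therefore outside $\Omega$, after which Lemma \ref{10 millertheorem} delivers $p(z)\prec 1+\sinh^{-1}z$.

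The main obstacle, as in the other theorems of this section, is the reduction to the worst case $m=1$. Because $n_5=-1/2$ is negative, one must verify that increasing $m$ does not erode the estimate: writing the factor as $1+(\beta_2/\beta_1)(mn_5+m-1)=1+(\beta_2/\beta_1)(\tfrac{m}{2}-1)$, one sees it is increasing in $m$, while the prefactor $m\beta_1 n_4(\theta)$ is manifestly increasing in $m$ as well, so the minimum over $m\geq1$ is attained at $m=1$ and equals $n_4(\theta)(\beta_1+\beta_2 n_5(\theta))=n_4(\theta)(\beta_1-\tfrac12\beta_2)$. One must also note that $2\beta_1-\beta_2\geq4>0$ keeps this factor positive, so that the further bound $n_4(\theta)\geq1/\sqrt2$ may be applied in the correct direction. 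Once this monotonicity in $m$ is confirmed the remaining manipulations are routine.
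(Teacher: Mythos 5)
Your proposal is correct and follows essentially the same route as the paper's proof: reduce to showing $|\phi(r,s,t;z)-1|=|\beta_1 s+\beta_2 t|\geq\sqrt{2}$ so that $\phi$ lands outside $C_1$ and hence outside the lune $\Omega$, then invoke Lemma \ref{10 millertheorem}. Your explicit check that the bound is monotone in $m$ (so the worst case is $m=1$) and that $2\beta_1-\beta_2>0$ keeps the relevant factor positive is a welcome elaboration of a step the paper passes over with ``as $m\geq 1$''.
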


\begin{proof}
Take $h(z)=z+\sqrt{1+z^2}$ for $z\in \mathbb{D}$. Then, $h(\mathbb{D})=\{w \in \mathbb{C}:|w^2-1|<2|w|\}=:\Omega$. Let $\phi:\mathbb{C}^3\times \mathbb{D}\rightarrow \mathbb{C}$ be defined as $\phi(r,s,t;z)=1+\beta_1 s+\beta_2 t$. For $\phi\in \Psi[\Omega, 1+\sinh^{-1} z]$, we must have $\phi(r,s,t;z)\notin \Omega$. From the geometry of $z+\sqrt{1+z^2}$ (see Fig. \ref{crescent10}), we note that $\Omega$ is constructed by the circles
\begin{equation*}
        C_1:|z-1|=\sqrt{2}\quad\text{and}\quad C_2:|z+1|=\sqrt{2}.
\end{equation*}

It is obvious that $\Omega$ contains the disk enclosed by $C_1$ and excludes the portion of the disk enclosed by $C_1\cap C_2$. We have
\begin{equation*}
        |\phi(r,s,t;z)-1|=|\beta_1 s+\beta_2 t|.
\end{equation*}
Similar to the proof of Theorem \ref{10 thmsigmoid}, we have
\begin{align*}
       |\beta_1 s+\beta_2 t|&\geq n_4(\theta)(\beta_1+\beta_2 n_5(\theta))\\
       &\geq \frac{1}{\sqrt{2}}\bigg(\beta_1-\frac{1}{2}\beta_2 \bigg)\\
       &\geq \sqrt{2}.
\end{align*}
The fact that $\phi(r,s,t;z)$ lies outside the circle $C_1$ suffices us to deduce that $\phi(r,s,t;z)\notin \Omega$. 
Consequently, $\phi\in \Psi[\Omega,1+\sinh^{-1}z]$ and thus the result follows as an application of Lemma \ref{10 millertheorem}.
\end{proof}

%When we choose $h(z) = 1 + \sin z$ and $1 + ze^z$, the obtained outcomes are presented below:
\begin{theorem}
Suppose $\beta_1$, $\beta_2>0$ and $2\beta_1-\beta_2\geq 2\sqrt{2}\sinh 1$. Let $p$ be analytic function in $\mathbb{D}$ with $p(0)=1$ and
\begin{equation*}
        1+\beta_1 zp'(z)+\beta_2 z^2p''(z)\prec 1+\sin z.
\end{equation*}
Then $p(z)\prec 1+\sinh^{-1}z$.
\end{theorem}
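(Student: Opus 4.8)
The plan is to place the implication inside the second-order admissibility framework of Lemma \ref{10 millertheorem}, following verbatim the architecture of Theorem \ref{10 thmsigmoid}. Setting $h(z)=1+\sin z$, its image is the eight-shaped region $\Omega_s:=h(\mathbb{D})=\{w\in\mathbb{C}:|\arcsin(w-1)|<1\}$, and I would define $\phi(r,s,t;z)=1+\beta_1 s+\beta_2 t$. Establishing $p\prec 1+\sinh^{-1}z$ then amounts to verifying $\phi\in\Psi[\Omega_s,1+\sinh^{-1}z]$, that is, showing $\phi(r,s,t;z)\notin\Omega_s$ whenever $(r,s,t)$ obey the admissibility data recorded in \eqref{10 4}.

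The decisive simplification is to sidestep the eight-shaped boundary altogether. I would invoke \cite[Lemma 3.3]{chosine}, which identifies $\{\delta\in\mathbb{C}:|\delta-1|<\sinh 1\}$ as the smallest disk containing $\Omega_s$. Hence it is enough to push $\phi(r,s,t;z)$ outside this disk, i.e. to prove $|\phi(r,s,t;z)-1|\geq\sinh 1$. Since $|\phi(r,s,t;z)-1|=|\beta_1 s+\beta_2 t|$, everything reduces to a single lower bound for $|\beta_1 s+\beta_2 t|$.

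For that bound I would mimic Theorem \ref{10 thmsigmoid}: write $|\beta_1 s+\beta_2 t|=\beta_1|s|\,\bigl|1+(\beta_2/\beta_1)(t/s)\bigr|\geq\beta_1|s|\,\RE\bigl(1+(\beta_2/\beta_1)(t/s)\bigr)$, and then substitute $|s|=m\,n_4(\theta)$ together with the admissibility inequality $\RE(1+t/s)\geq m(1+n_5(\theta))$ read off from \eqref{10 1} and \eqref{10 3}. Using $m\geq1$ collapses this to $n_4(\theta)(\beta_1+\beta_2 n_5(\theta))$, and inserting the constant value $n_5(\theta)\equiv-1/2$ together with the minimum $\min_\theta n_4(\theta)=1/\sqrt2$ gives $|\beta_1 s+\beta_2 t|\geq\tfrac1{\sqrt2}\bigl(\beta_1-\tfrac12\beta_2\bigr)=\dfrac{2\beta_1-\beta_2}{2\sqrt2}$. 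The hypothesis $2\beta_1-\beta_2\geq2\sqrt2\sinh 1$ is precisely what makes this at least $\sinh 1$, so $\phi(r,s,t;z)\notin\Omega_s$, and Lemma \ref{10 millertheorem} then finishes the proof.

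I anticipate no structural difficulty, since the skeleton repeats the earlier petal-domain theorems. The only care needed is in the sign bookkeeping of the inequality chain: because $n_5=-1/2<0$, one must check that $\beta_1-\tfrac12\beta_2$ remains the controlling (positive) quantity after the reductions --- both the step using $m\geq1$ and the step minimizing $n_4(\theta)$ point in the correct direction only when $\beta_1-\tfrac12\beta_2>0$, which the hypothesis $2\beta_1-\beta_2\geq 2\sqrt2\sinh 1>0$ guarantees. The second routine point is citing the enclosing-disk radius $\sinh 1$ accurately from \cite{chosine}.
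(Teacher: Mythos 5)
Your proposal is correct and follows essentially the same route as the paper's own proof: the same admissibility operator $\phi(r,s,t;z)=1+\beta_1 s+\beta_2 t$, the same reduction to the smallest enclosing disk $\{\delta:|\delta-1|<\sinh 1\}$ of $\Omega_s$ via \cite[Lemma 3.3]{chosine}, and the same lower bound $|\beta_1 s+\beta_2 t|\geq(2\beta_1-\beta_2)/(2\sqrt{2})$ obtained from \eqref{10 1}, \eqref{10 3} and $m\geq 1$, followed by Lemma \ref{10 millertheorem}. Your added remark on the positivity of $\beta_1-\tfrac12\beta_2$ is a correct (and slightly more careful) observation than what the paper records explicitly.
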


\begin{proof}
Consider $h(z)=1+\sin z$ for $z\in \mathbb{D}$. Then, $h(\mathbb{D})=\{w \in \mathbb{C}:|\arcsin (w -1)|<1\}=:\Omega_s$. Let $\phi:\mathbb{C}^3\times \mathbb{D}\rightarrow \mathbb{C}$ be defined as $\phi(r,s,t;z)=1+\beta_1 s+\beta_2 t$. For $\phi\in \Psi[\Omega_s, 1+\sinh^{-1}z]$, we must have $\phi(r,s,t;z)\notin \Omega_s$. From \cite[Lemma 3.3]{chosine}, we note that the smallest disk containing $\Omega_s$ is $\{\delta \in \mathbb{C}: |\delta -1|<\sinh 1\}$. So,
\begin{equation*}
        |\phi(r,s,t;z)-1|=|\beta_1 s+\beta_2 t|.
\end{equation*}
Proceeding on the same lines of the proof of Theorem \ref{10 thmsigmoid}, we have
\begin{align*}
       |\beta_1 s+\beta_2 t|&\geq n_4(\theta)(\beta_1+\beta_2 n_5(\theta))\\
       &\geq \frac{1}{\sqrt{2}}\bigg(\beta_1-\frac{1}{2}\beta_2\bigg)\\
       &\geq \sinh 1.
\end{align*}
Clearly, $\phi(r,s,t;z)$ lies outside the disk $\{\delta \in \mathbb{C}: |\delta -1|<\sinh 1\}$, which is enough to conclude that $\phi(r,s,t;z)\notin \Omega_s$. Therefore, $\phi\in \Psi[\Omega_s,1+\sinh^{-1} z]$ and the result follows as an application of Lemma \ref{10 millertheorem}.
\end{proof}

\begin{theorem}
Let $\beta_1$, $\beta_2>0$ and $2\beta_1-\beta_2\geq 2\sqrt{2}e$. Let $p$ be analytic function in $\mathbb{D}$ with $p(0)=1$ and
\begin{equation*}
        1+\beta_1 zp'(z)+\beta_2 z^2p''(z)\prec 1+ze^z.
\end{equation*}
Then $p(z)\prec 1+\sinh^{-1}z$.
\end{theorem}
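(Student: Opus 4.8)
The plan is to follow the same admissibility-function template that runs through this section (the cases $h(z)=\sqrt{1+z}$, $2/(1+e^{-z})$, $z+\sqrt{1+z^2}$, $1+\sin z$), now specialized to $h(z)=1+ze^z$. First I would set $h(z)=1+ze^z$ and write $\Omega:=h(\mathbb{D})$, and define the candidate operator $\phi:\mathbb{C}^3\times\mathbb{D}\to\mathbb{C}$ by $\phi(r,s,t;z)=1+\beta_1 s+\beta_2 t$. The whole task reduces to verifying $\phi\in\Psi[\Omega,1+\sinh^{-1}z]$, which by the admissibility description in \eqref{10 4} amounts to showing $\phi(r,s,t;z)\notin\Omega$ for the admissible data $s=m\zeta q'(\zeta)$ satisfying $|s|=mn_4(\theta)$ and $\RE(1+t/s)\geq m(1+n_5(\theta))$, with $n_4$, $n_5$ as in \eqref{10 1} and \eqref{10 3}.

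The essential geometric input is that the cardioid-type region $\Omega=(1+ze^z)(\mathbb{D})$ is contained in the disk $\{\delta\in\mathbb{C}:|\delta-1|<e\}$; this is exactly \cite[Lemma 3.3]{kumar-ganganiaCardioid-2021}, already invoked in the corresponding $\mathcal{S}^{*}_{s}$ result. Consequently it suffices to push $\phi(r,s,t;z)$ outside this enclosing disk, i.e. to bound the quantity $|\phi(r,s,t;z)-1|=|\beta_1 s+\beta_2 t|$ below by $e$. For the lower bound I would proceed precisely as in the proof of Theorem \ref{10 thmsigmoid}: factor $|\beta_1 s+\beta_2 t|=\beta_1|s|\,\bigl|1+(\beta_2/\beta_1)(t/s)\bigr|\geq \beta_1|s|\,\RE\bigl(1+(\beta_2/\beta_1)(t/s)\bigr)$, insert $|s|=mn_4(\theta)$ and the admissibility estimate on $\RE(t/s)$, and use $m\geq 1$ (together with $n_5(\theta)+1>0$) to collapse the $m$-dependence down to $n_4(\theta)(\beta_1+\beta_2 n_5(\theta))$. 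Substituting the constant value $n_5(\theta)=-1/2$ from \eqref{10 3} and the minimum $n_4(\theta)\geq 1/\sqrt{2}$ from \eqref{10 1} yields $\tfrac{1}{\sqrt{2}}\bigl(\beta_1-\tfrac{1}{2}\beta_2\bigr)=(2\beta_1-\beta_2)/(2\sqrt{2})$, which the hypothesis $2\beta_1-\beta_2\geq 2\sqrt{2}e$ forces to be at least $e$. Hence $\phi(r,s,t;z)$ lies outside $\{\delta:|\delta-1|<e\}$, so $\phi(r,s,t;z)\notin\Omega$ and $\phi\in\Psi[\Omega,1+\sinh^{-1}z]$; the desired subordination $p\prec 1+\sinh^{-1}z$ then follows at once from Lemma \ref{10 millertheorem}.

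I anticipate no genuine obstacle here. Because $n_5$ is the constant $-1/2$ for this class (unlike the $\theta$-dependent $n_2$ of the $\mathcal{S}^{*}_{s}$ section), the monotonicity in $m$ is clean and no sign subtlety intervenes; the only point requiring care is to confirm that the stated hypothesis is exactly the rescaling of the radius-$e$ bound, namely that $(2\beta_1-\beta_2)/(2\sqrt{2})\geq e$ is equivalent to $2\beta_1-\beta_2\geq 2\sqrt{2}e$, and that $\beta_1-\tfrac12\beta_2>0$ so that the factorization preserves the correct sign.
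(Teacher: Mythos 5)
Your proposal matches the paper's proof essentially step for step: the same operator $\phi(r,s,t;z)=1+\beta_1 s+\beta_2 t$, the same appeal to \cite[Lemma 3.3]{kumar-ganganiaCardioid-2021} for the enclosing disk $\{\delta:|\delta-1|<e\}$, the same lower bound $|\beta_1 s+\beta_2 t|\geq n_4(\theta)(\beta_1+\beta_2 n_5(\theta))\geq \tfrac{1}{\sqrt{2}}(\beta_1-\tfrac{1}{2}\beta_2)\geq e$, and the same conclusion via Lemma \ref{10 millertheorem}. The argument is correct and no further comment is needed.
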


\begin{proof}
Let $h(z)=1+ze^z$ for $z\in \mathbb{D}$, then $\Omega:=h(\mathbb{D})$. Let $\phi:\mathbb{C}^3\times \mathbb{D}\rightarrow \mathbb{C}$ be defined as $\phi(r,s,t;z)=1+\beta_1 s+\beta_2 t$. For $\phi\in \Psi[\Omega, 1+\sinh^{-1} z]$, we need to have $\phi(r,s,t;z)\notin \Omega$. Through \cite[Lemma 3.3]{kumar-ganganiaCardioid-2021},
we observe that the smallest disk containing $\Omega$ is $\{\delta \in \mathbb{C}: |\delta -1|<e\}$. Thus
\begin{equation*}
        |\phi(r,s,t;z)-1|=|\beta_1 s+\beta_2 t|.
\end{equation*}
Analogous to Theorem \ref{10 thmsigmoid}, we have
\begin{align*}
       |\beta_1 s+\beta_2 t|&\geq n_4(\theta)(\beta_1+\beta_2 n_5(\theta))\\
       &\geq \frac{1}{\sqrt{2}}\bigg(\beta_1-\frac{1}{2}\beta_2\bigg)\\
       &\geq e.
\end{align*}
Clearly, $\phi(r,s,t;z)$ lies outside the disk $\{\delta \in \mathbb{C}: |\delta -1|<e\}$ which suffices us to conclude that $\phi(r,s,t;z)\notin \Omega$. Therefore, $\phi\in \Psi[\Omega,1+\sinh^{-1} z]$ and the result follows as an application of Lemma \ref{10 millertheorem}.
\end{proof}

%Finally, in the concluding step, we consider $h(z)$ to take the forms of both $e^z$ and $1+\sinh^{-1}z$, as shown below:
\begin{theorem}
Suppose $\beta_1$, $\beta_2>0$ and $2\beta_1-\beta_2\geq 2\sqrt{2}(e-1)$. Let $p$ be analytic function in $\mathbb{D}$ with $p(0)=1$ and
\begin{equation*}
        1+\beta_1 zp'(z)+\beta_2 z^2p''(z)\prec e^z.
\end{equation*}
Then $p(z)\prec 1+\sinh^{-1}z$.
\end{theorem}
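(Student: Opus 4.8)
The plan is to follow the template of Theorem~\ref{10 thmsigmoid} verbatim, replacing the target region of the sigmoid by the region associated with $e^z$ and invoking Lemma~\ref{prilemma4} in place of Lemma~\ref{prilemma42} at the concluding step. First I would set $h(z)=e^z$, so that $h(\mathbb{D})=\{w\in\mathbb{C}:|\log w|<1\}=:\Omega$, and define the admissible operator $\phi(r,s,t;z)=1+\beta_1 s+\beta_2 t$. By Lemma~\ref{10 millertheorem} it suffices to verify that $\phi\in\Psi[\Omega,1+\sinh^{-1}z]$, i.e.\ that $\phi(r,s,t;z)\notin\Omega$ whenever $(r,s,t)$ satisfies the admissibility conditions \eqref{10 4} recorded for $q(z)=1+\sinh^{-1}z$. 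Since $|\phi(r,s,t;z)-1|=|\beta_1 s+\beta_2 t|$, the whole argument reduces to a lower bound on this quantity.

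The core computation is exactly the lower bound chain used in the preceding theorems of this section. Starting from $|\beta_1 s+\beta_2 t|\ge \beta_1|s|\,\RE(1+(\beta_2/\beta_1)(t/s))$, I would substitute $|s|=m\,n_4(\theta)$ and the admissibility estimate $\RE(1+t/s)\ge m(1+n_5(\theta))$, and then apply the standard $m\ge 1$ reduction to collapse the bound to $n_4(\theta)(\beta_1+\beta_2 n_5(\theta))$. Because $n_4(\theta)\ge 1/\sqrt{2}$ by \eqref{10 1} and $n_5(\theta)\equiv -1/2$ by \eqref{10 3}, this equals $\tfrac{1}{\sqrt{2}}(\beta_1-\tfrac12\beta_2)=(2\beta_1-\beta_2)/(2\sqrt{2})$, which is $\ge e-1$ precisely under the hypothesis $2\beta_1-\beta_2\ge 2\sqrt{2}(e-1)$. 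Hence $|\beta_1 s+\beta_2 t|\ge e-1$.

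Finally, Lemma~\ref{prilemma4} converts this modulus bound into the required logarithmic bound: since $|\beta_1 s+\beta_2 t|\ge e-1$, we get $|\log(1+\beta_1 s+\beta_2 t)|=|\log\phi(r,s,t;z)|\ge 1$, so $\phi(r,s,t;z)\notin\Omega$. Thus $\phi\in\Psi[\Omega,1+\sinh^{-1}z]$, and Lemma~\ref{10 millertheorem} yields $p(z)\prec 1+\sinh^{-1}z$. I do not anticipate any genuine obstacle: the constancy of $n_5$ makes the $m$-reduction transparent (the $m$-dependent expression has derivative $n_4(\theta)(\beta_1-\beta_2+\beta_2 m)\ge n_4(\theta)\beta_1>0$ for $m\ge 1$, so it is monotone and minimized at $m=1$), and the only conceptual ingredient is recognizing that the region for $e^z$ is $\{|\log w|<1\}$, which is exactly what allows Lemma~\ref{prilemma4} to translate the threshold $e-1$ into the boundary value $1$.
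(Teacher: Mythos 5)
Your proposal is correct and follows essentially the same route as the paper's own proof: the bound $|\beta_1 s+\beta_2 t|\ge n_4(\theta)(\beta_1+\beta_2 n_5(\theta))\ge \frac{1}{\sqrt{2}}(\beta_1-\tfrac12\beta_2)\ge e-1$, followed by Lemma~\ref{prilemma4} to conclude $|\log\phi|\ge 1$ and hence $\phi\notin\Omega$, is exactly the paper's argument. Your added remark on the monotonicity in $m$ is a small (welcome) elaboration of the ``$m\ge 1$'' step that the paper leaves implicit.
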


\begin{proof}
Take $h(z)=e^z$ for $z\in \mathbb{D}$. Then, $h(\mathbb{D})=\{w \in\mathbb{C}:|\log w |<1\}=:\Omega$. Let $\phi:\mathbb{C}^3\times \mathbb{D}\rightarrow \mathbb{C}$ be defined as $\phi(r,s,t;z)=1+\beta_1 s+\beta_2 t$. For $\phi\in \Psi[\Omega, 1+\sinh^{-1}z]$, we must have $\phi(r,s,t;z)\notin \Omega$. So,
\begin{equation*}
        |\phi(r,s,t;z)-1|=|\beta_1 s+\beta_2 t|.
\end{equation*}
Similar to the proof of Theorem \ref{10 thmsigmoid}, we have
\begin{align}\label{10 82}
       |\beta_1 s+\beta t|&\geq n_4(\theta)(\beta_1+\beta_2 n_5(\theta))\nonumber\\
       &\geq \frac{1}{\sqrt{2}}\bigg(\beta_1-\frac{1}{2}\beta_2\bigg)\nonumber\\
       &\geq e-1.
\end{align}
Further, we have
\begin{equation*}
     |\log (\phi(r,s,t;z)|=|\log (1+\beta_1 s+\beta_2 t)|.
\end{equation*}
Through Lemma \ref{prilemma4} and \eqref{10 82}, we have
\begin{equation*}
   |\log (1+\beta_1 s+\beta_2 t)|\geq 1, 
\end{equation*}
which implies that $\phi(r,s,t;z)\notin \Omega$. Therefore, $\phi\in \Psi[\Omega,1+\sinh^{-1} z]$ and thus the result follows as an application of Lemma \ref{10 millertheorem}.
\end{proof}

\begin{theorem}
Suppose $\beta_1$, $\beta_2>0$ and $2\beta_1-\beta_2\geq \sqrt{2}\pi$. Let $p$ be analytic function in $\mathbb{D}$ with $p(0)=1$ and
\begin{equation*}
        1+\beta_1 zp'(z)+\beta_2 z^2p''(z)\prec 1+\sinh^{-1} z.
\end{equation*}
Then $p(z)\prec 1+\sinh^{-1}z$.
\end{theorem}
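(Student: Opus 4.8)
The plan is to follow the admissibility template used throughout Section~3, reducing the subordination implication to a verification that the operator $\phi(r,s,t;z)=1+\beta_1 s+\beta_2 t$ belongs to the admissible class $\Psi[\Omega_\rho,1+\sinh^{-1}z]$, and then invoking Lemma~\ref{10 millertheorem}. First I would set $h(z)=1+\sinh^{-1}z$, so that $h(\mathbb{D})=\Omega_\rho=\{w\in\mathbb{C}:|\sinh(w-1)|<1\}$, and recall from \cite[Remark~2.7]{kush} that the smallest disk containing $\Omega_\rho$ is $\{\delta\in\mathbb{C}:|\delta-1|<\pi/2\}$. Since here $h$ is itself the Ma--Minda function $q(z)=1+\sinh^{-1}z$ governing the admissibility class, it suffices to force the admissibility data $r,s,t$ to place $\phi(r,s,t;z)$ outside this enclosing disk, that is $|\phi(r,s,t;z)-1|\geq \pi/2$, since that alone guarantees $\phi(r,s,t;z)\notin\Omega_\rho$.

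The second step is the standard real-part estimate. Writing $|\phi(r,s,t;z)-1|=|\beta_1 s+\beta_2 t|$ and factoring out $\beta_1|s|$, I would pass to the real part and substitute the admissibility relations $s=m\zeta q'(\zeta)$ together with $\RE(1+t/s)\geq m(1+n_5(\theta))$, exactly as in the proof of Theorem~\ref{10 thmsigmoid}. Using $|s|=m\,n_4(\theta)$, the constant value $n_5(\theta)\equiv -1/2$ from \eqref{10 3}, and the monotonicity in $m$ afforded by $m\geq 1$ (the hypothesis $2\beta_1-\beta_2\geq\sqrt{2}\,\pi$ makes $\beta_1-\beta_2/2>0$, so the bound is genuinely increasing in $m$ and minimized at $m=1$), this collapses to
\begin{equation*}
  |\beta_1 s+\beta_2 t|\geq n_4(\theta)\Bigl(\beta_1+\beta_2 n_5(\theta)\Bigr)\geq \frac{1}{\sqrt{2}}\Bigl(\beta_1-\tfrac{1}{2}\beta_2\Bigr)=\frac{2\beta_1-\beta_2}{2\sqrt{2}},
\end{equation*}
where I have used the minimum value $n_4(\theta)\geq 1/\sqrt{2}$ recorded in \eqref{10 1}.

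Finally, the hypothesis $2\beta_1-\beta_2\geq \sqrt{2}\,\pi$ yields $(2\beta_1-\beta_2)/(2\sqrt{2})\geq \pi/2$, so $|\phi(r,s,t;z)-1|\geq \pi/2$ and $\phi(r,s,t;z)$ lies outside the enclosing disk, hence outside $\Omega_\rho$. This establishes $\phi\in\Psi[\Omega_\rho,1+\sinh^{-1}z]$, and the conclusion $p\prec 1+\sinh^{-1}z$ follows by Lemma~\ref{10 millertheorem}. I do not expect a genuine obstacle here: the only non-mechanical ingredient is the enclosing-disk radius $\pi/2$, which I take from \cite[Remark~2.7]{kush}, while the remaining work is the same real-part chain used in the earlier theorems of this section, with the constant bookkeeping reducing simply to matching the radius $\pi/2$ against the stated threshold $\sqrt{2}\,\pi$ for $2\beta_1-\beta_2$.
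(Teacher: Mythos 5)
Your proposal is correct and follows essentially the same route as the paper: the same admissible operator $\phi(r,s,t;z)=1+\beta_1 s+\beta_2 t$, the same enclosing disk $\{\delta:|\delta-1|<\pi/2\}$ from \cite[Remark 2.7]{kush}, the same real-part chain through $n_4(\theta)\geq 1/\sqrt{2}$ and $n_5(\theta)=-1/2$, and the same appeal to Lemma \ref{10 millertheorem}. Your explicit remark that $\beta_1-\beta_2/2>0$ justifies minimizing over $m$ at $m=1$ is a small point the paper leaves implicit, but otherwise the two arguments coincide.
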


\begin{proof}
Suppose $h(z)=1+\sinh^{-1} z$ for $z\in \mathbb{D}$. Then, $h(\mathbb{D})=\{w \in\mathbb{C}:|\sinh (w -1)|<1\}=:\Omega_{\rho}$. Let $\phi:\mathbb{C}^3\times \mathbb{D}\rightarrow \mathbb{C}$ be defined as $\phi(r,s,t;z)=1+\beta_1 s+\beta_2 t$. For $\phi\in \Psi[\Omega_\rho, 1+\sinh^{-1} z]$, we must have $\phi(r,s,t;z)\notin \Omega_\rho$. Through \cite[Remark 2.7]{kush}, we note that the disk $\{\delta \in\mathbb{C}: |\delta-1|<\pi/2\}$ is the smallest disk containing $\Omega_\rho$. Now
\begin{equation*}
        |\phi(r,s,t;z)-1|=|\beta_1 s+\beta_2 t|.
\end{equation*}
Proceeding on the same lines as in proof of Theorem \ref{10 thmsigmoid}, we have
\begin{align*}
       |\beta_1 s+\beta_2 t|&\geq n_4(\theta)(\beta_1+\beta_2 n_5(\theta))\\
       &\geq \frac{1}{\sqrt{2}}\bigg(\beta_1-\frac{1}{2}\beta_2\bigg)\\
       &\geq \frac{\pi}{2}.
\end{align*}
Clearly, $\phi(r,s,t;z)$ does not belong to the disk $\{\delta \in \mathbb{C}: |\delta -1|<\pi/2\}$, which suffices us to conclude that $\phi(r,s,t;z)\notin \Omega_\rho$. Therefore, $\phi\in \Psi[\Omega_\rho,1+\sinh^{-1} z]$ and thus the result follows as an application of Lemma \ref{10 millertheorem}.
\end{proof}

We conclude this article through the following corollary, obtained from Theorems 3.5-3.10. This is accomplished by considering $p(z) = zf'(z)/f(z)$, as demonstrated in Corollary \ref{10s corollaryfirst}:
\begin{corollary}
Suppose $\beta_1$, $\beta_2>0$ and $f\in\mathcal{A}$. Then, $f\in \mathcal{S}^{*}_{\rho}$ if any of the conditions hold:
\begin{enumerate}[$(i)$]
\item $S_{f}(z)\prec \sqrt{1+z}$ and $(2{\beta_1}- \beta)(2\beta_1-\beta-4\sqrt{2})\geq 8$.
\item $S_{f}(z)\prec (1+Cz)/(1+Dz)$ and $(2\beta_1 -\beta_2)(1-D^2)\geq 2\sqrt{2}(C-D)(1+|D|)$, where $-1<D<C\leq 1$.
\item $S_{f}(z)\prec 2/(1+e^{-z})$ and $2\beta_1-\beta_2\geq 2\sqrt{2} r_0$, where $r_0\approx 0.546302$ is the positive root of the equation $r^2+2 \cot (1)r-1=0$.
\item $S_{f}(z)\prec z+\sqrt{1+z^2}$ and $2\beta_1-\beta_2\geq 4$.
\item $S_{f}(z)\prec 1+\sin z$ and $2\beta_1-\beta_2\geq 2\sqrt{2}\sinh 1$.
\item $S_{f}(z)\prec 1+ze^z$ and $2\beta_1-\beta_2\geq 2\sqrt{2}e$.
\item $S_{f}(z)\prec e^z$ and $2\beta_1-\beta_2\geq 2\sqrt{2}(e-1)$.
\item $S_{f}(z)\prec 1+\sinh^{-1}z$ and $2\beta_1-\beta_2\geq \sqrt{2}\pi$.
\end{enumerate}
\end{corollary}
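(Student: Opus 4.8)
The plan is to read this corollary off the second-order subordination theorems of this section by means of the single substitution $p(z)=zf'(z)/f(z)$. First I would note that for $f\in\mathcal{A}$ the function $p(z)=zf'(z)/f(z)$ is analytic on $\mathbb{D}$ and satisfies $p(0)=1$, so $p\in\mathcal{H}[1,1]$ and each of the theorems leading up to this corollary applies to it without modification.

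The only identity to record is $1+\beta_1 zp'(z)+\beta_2 z^2p''(z)=S_f(z)$, with $S_f$ as in \eqref{10s corollary21}. Writing $S_1=zf'/f$, $S_2=z^2f''/f$ and $S_3=z^3f'''/f$, logarithmic differentiation of $p=zf'/f$ gives $zp'=S_2-S_1^2+S_1$ and $z^2p''=S_3+2S_2+2S_1^3-2S_1^2-3S_1S_2$, so that the left-hand side collapses to $S_f(z)$. This is precisely the computation already carried out in Corollary \ref{10s corollaryfirst}, so nothing new is needed here.

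With this identity in hand, each hypothesis of the corollary is exactly the hypothesis of one of the preceding second-order theorems: the subordination $S_f(z)\prec h(z)$ is the same as $1+\beta_1 zp'(z)+\beta_2 z^2p''(z)\prec h(z)$, and the stated inequality on $2\beta_1-\beta_2$ (respectively the $C,D$ condition in the Janowski case) matches the parameter restriction of that theorem. The theorem then delivers $p(z)\prec 1+\sinh^{-1}z$, i.e.\ $zf'(z)/f(z)\prec 1+\sinh^{-1}z$, which is the defining condition for $f\in\mathcal{S}^{*}_{\rho}$. Running through the eight choices $h(z)=\sqrt{1+z}$, $(1+Cz)/(1+Dz)$, $2/(1+e^{-z})$, $z+\sqrt{1+z^2}$, $1+\sin z$, $1+ze^z$, $e^z$, $1+\sinh^{-1}z$ in turn establishes items $(i)$--$(viii)$. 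There is essentially no obstacle: the substitution transports each theorem verbatim into the corresponding statement about $f$, and the only points needing care---that $p(0)=1$ and that the operator evaluated at $p=zf'/f$ reproduces $S_f$---are both inherited from Corollary \ref{10s corollaryfirst}.
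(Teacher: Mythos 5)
Your proposal is correct and matches the paper's own (essentially one-line) justification: the paper likewise obtains this corollary by substituting $p(z)=zf'(z)/f(z)$ into the preceding second-order theorems and invoking the identity $1+\beta_1 zp'(z)+\beta_2 z^2p''(z)=S_f(z)$ already recorded in Corollary \ref{10s corollaryfirst}. The only details you add beyond the paper---that $p(0)=1$ and the explicit expressions for $zp'$ and $z^2p''$ in terms of $S_1,S_2,S_3$---are exactly the right ones to check.
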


%\underline{\textbf{Future Scope:}} 

%On Acknowledgment %%%%%%%%
%		\section*{Acknowledgement}
	%		The author would like to thank the referees for the helpful
				%		suggestions.
				%		
				%%%% Bibliography  %%%%%%%%%%
\subsection*{Acknowledgment}
Neha Verma is thankful to the Department of Applied Mathematics, Delhi Technological University, New Delhi-110042 for providing Research Fellowship.

\end{document}